\title
[Controlling cardinal characteristics without adding reals]
{Controlling cardinal characteristics\\ without adding reals}
\date{2020-10-20}
\author{Martin Goldstern}
\address{TU Wien, Institute of
Discrete Mathematics and Geometry, Wiedner Hauptstrasse 8-10/104, 1040 Wien, Austria.}
\email{goldstern@tuwien.ac.at}
\urladdr{http://www.tuwien.ac.at/goldstern/}
\author{Jakob Kellner}
\address{TU Wien, Institute of
Discrete Mathematics and Geometry, Wiedner Hauptstrasse 8-10/104, 1040 Wien, Austria.}
\email{jakob.kellner@tuwien.ac.at}
\urladdr{http://dmg.tuwien.ac.at/kellner/}
\author{Diego A. Mej\'{i}a}
\address{Creative Science Course (Mathematics), Faculty of Science, Shizuoka University, Ohya 836, Suruga-ku, Shizuoka-shi, Japan 422-8529.}
\email{diego.mejia@shizuoka.ac.jp}
\urladdr{http://www.researchgate.com/profile/Diego\_Mejia2}
\author{Saharon Shelah}
\address{The Hebrew University of Jerusalem, Einstein Institute of Mathematics, Edmond J. Safra Campus, Givat Ram,  Jerusalem, 91904, Israel, and Rutgers University, Department of Mathematics, New Brunswick, NJ 08854, USA.}
\email{shlhetal@math.huji.ac.il}
\urladdr{http://shelah.logic.at}
\thanks{This work was supported by the following grants:
Austrian Science Fund (FWF): project number I3081, P29575
(first author) and P30666
(second author);
Grant-in-Aid for Early Career Scientists 18K13448, Japan Society for the Promotion of Science (third author); Israel Science Foundation (ISF) grant no: 1838/19 and NSF grant no: DMS 1833363 (fourth author). This is publication number 1166 of the fourth author.}
\subjclass[2010]{03E17, 03E35}
\keywords{Cardinal characteristics of the continuum, forcing extensions without new reals, Cicho\'n's diagram}
\newcommand{\LCU}{\textnormal{LCU}}
\newcommand{\COB}{\textnormal{COB}}
\DeclareMathOperator{\add}{add}
\DeclareMathOperator{\cov}{cov}
\DeclareMathOperator{\non}{non}
\DeclareMathOperator{\cof}{cof}
\DeclareMathOperator{\cf}{cof}
\DeclareMathOperator{\crit}{cr}
\DeclareMathOperator{\dom}{dom}
\newcommand{\Null}{\mathcal N}
\newcommand{\Meager}{\mathcal M}
\newcommand{\addN}{{\ensuremath{\add(\Null)}}}
\newcommand{\cofN}{{\ensuremath{\cof(\Null)}}}
\newcommand{\covN}{{\ensuremath{\cov(\Null)}}}
\newcommand{\nonN}{{\ensuremath{\non(\Null)}}}
\newcommand{\addM}{{\ensuremath{\add(\Meager)}}}
\newcommand{\cofM}{{\ensuremath{\cof(\Meager)}}}
\newcommand{\covM}{{\ensuremath{\cov(\Meager)}}}
\newcommand{\nonM}{{\ensuremath{\non(\Meager)}}}
\newcommand{\cfrak}{\mathfrak{c}}
\newcommand{\bfrak}{\mathfrak{b}}
\newcommand{\dfrak}{\mathfrak{d}}
\newcommand{\gfrak}{\mathfrak{g}}
\newcommand{\hfrak}{\mathfrak{h}}
\newcommand{\mfrak}{\mathfrak{m}}
\newcommand{\pfrak}{\mathfrak{p}}
\newcommand{\tfrak}{\mathfrak{t}}
\newcommand{\xfrak}{\mathfrak{x}}
\newcommand{\precal}{\textnormal{cal}}
\newcommand{\plike}{$\mathfrak{t}$-like}
\newcommand{\tlike}{\plike}
\newcommand{\mlike}{$\mathfrak{m}$-like}
\newcommand{\hlike}{$\mathfrak{h}$-like}
\newcommand{\la}{\langle}
\newcommand{\ra}{\rangle}
\newcommand{\Qhor}{Q^2}
\theoremstyle{plain}
  \newtheorem{theorem}[equation]{Theorem}
  \newtheorem{corollary}[equation]{Corollary}
  \newtheorem{lemma}[equation]{Lemma}
   \newtheorem{fact}[equation]{Fact}
  \newtheorem{clm}[equation]{Claim}
\theoremstyle{definition}
  \newtheorem{definition}[equation]{Definition}
  \newtheorem*{example*}{Example}
  \newtheorem{remark}[equation]{Remark}
  \newtheorem*{remark*}{Remark}
  \newtheorem*{remarks*}{Remarks}
  \newtheorem*{notation*}{Notation}
  \newtheorem{assumption}[equation]{Assumption}
\numberwithin{equation}{section}
\newcommand{\innitialmark}[1]{{#1}^\text{pre}}
\newcommand{\finalmark}[1]{{#1}^\text{fin}}
\newcommand{\Ppre}{\innitialmark{P}}
\newcommand{\Pfin}{\finalmark{P}}
\begin{document}
\begin{abstract}
    We investigate the behavior of cardinal characteristics
    of the reals under extensions that do not add new ${<}\kappa$-sequences (for some regular $\kappa$).

    As an application, we show that consistently
    the following cardinal characteristics can be
    different: The (``independent'') characteristics in Cicho\'n's diagram, plus $\aleph_1<\mathfrak m<\mathfrak p<\mathfrak h<\addN$.
    (So we get thirteen different values, including $\aleph_1$ and continuum).

    We also give
    constructions to alternatively separate other MA-numbers (instead of $\mfrak$), namely:
    MA for
    $k$-Knaster from MA for $k+1$-Knaster;
    and MA for the union of all $k$-Knaster forcings from MA for precaliber.
\end{abstract}
\maketitle

%(Table of contents will be removed in final paper)
%\tableofcontents

\section{Introduction}

In this work we investigate how to preserve and how to change certain
cardinal characteristics of the continuum in NNR extensions, i.e., extensions that do not
add reals; or more generally that do not add ${<}\kappa$-sequences of ordinals
for some regular $\kappa$. It is known that the
``Blass-uniform'' characteristics (see Definition~\ref{def:blassu})
tend to keep their values in such extensions
(cf.\ Mildenberger's~\cite[Prop.~2.1]{MR1625907}),
and we give some explicit results in that
direction.
Other cardinal characteristics tend to keep a value $\theta$ only if
$\theta<\kappa$. We will use this effect to combine various forcing notions
(most of them already known) to get models with many simultaneously different
``classical'' characteristics.

In particular, we look at the entries of Cicho\'n's diagram, which we call \emph{Cicho\'n-characteristics} (see Figure~\ref{fig:cichon}, we assume that the reader is familiar with this diagram), and the following characteristics:
\begin{definition}\label{DefChar1}
    Let $\mathcal{P}$ be a class of posets.
    \begin{enumerate}[(1)]
        \item $\mfrak(\mathcal{P})$ denotes the minimal cardinal where Martin's axiom for the posets in $\mathcal{P}$ fails. More explicitly, it is the minimal $\kappa$ such that, for some poset $Q\in\mathcal{P}$, there is a collection $\mathcal{D}$ of size $\kappa$ of dense subsets of $Q$ such that there is no filter in $Q$ intersecting all the members of $\mathcal{D}$.
        \item $\mfrak:=\mfrak(\textnormal{ccc})$.
        \item Write $a\subseteq^* b$ iff $a\smallsetminus b$ is finite. Say that $a\in[\omega]^{\aleph_0}$ is a \emph{pseudo-intersection} of $F\subseteq[\omega]^{\omega}$ if $a\subseteq^* b$ for all $b\in F$.
        \item The \emph{pseudo-intersection number $\pfrak$} is the smallest size of a filter base of a free filter on $\omega$ that has no pseudo-intersection in $[\omega]^{\aleph_0}$.
        \item The \emph{tower number} $\tfrak$ is the smallest order type of a $\subseteq^*$-decreasing sequence in $[\omega]^{\aleph_0}$ without pseudo-intersection.
        \item The \emph{distributivity number} $\hfrak$ is the smallest size of a collection of dense subsets of $([\omega]^{\aleph_0},\subseteq^*)$ whose intersection is empty.
        \item A family $D\subseteq[\omega]^{\aleph_0}$ is \emph{groupwise dense} if
        \begin{enumerate}[(i)]
            \item $a\subseteq^* b$ and $b\in D$ implies $a\in D$, and
            \item whenever $(I_n:n<\omega)$ is an interval partition of $\omega$, there is some $a\in[\omega]^{\aleph_0}$ such that $\bigcup_{n\in a}I_n\in D$.
        \end{enumerate}
        The \emph{groupwise density number $\gfrak$} is the smallest size of a collection of groupwise dense sets whose intersection is empty.
    \end{enumerate}
\end{definition}

\newcommand{\mye}{*+[F.]{\phantom{\lambda}}}
\begin{figure}
  \centering
\[
\xymatrix@=4.5ex{
&            \covN\ar[r] & \nonM \ar[r]      &  \mye \ar[r]     & \cofN\ar[r] &2^{\aleph_0} \\
&                               & \mathfrak b\ar[r]\ar[u]  &  \mathfrak d\ar[u] &              \\
  \aleph_1\ar[r] & \addN\ar[r]\ar[uu] & \mye\ar[r]\ar[u] &  \covM\ar[r]\ar[u]& \nonN\ar[uu]
}
\]
    \caption{\label{fig:cichon}Cicho\'n's diagram with the
two ``dependent'' values removed, which are
$\addM=\min(\mathfrak b, \covM)$
and $\cofM=\max(\nonM,\mathfrak d)$.
An arrow $\mathfrak x\rightarrow \mathfrak y$ means that
ZFC proves $\mathfrak x\le \mathfrak y$.}
\end{figure}

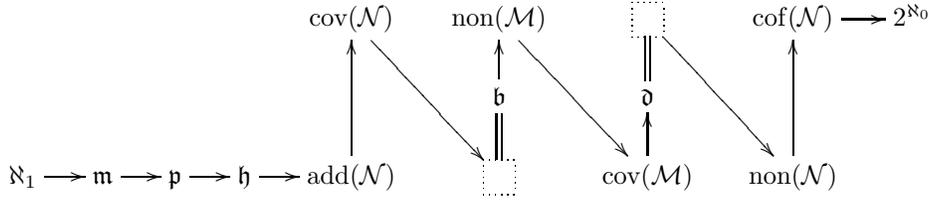
\begin{figure}
  \centering
\[
\xymatrix@=3.5ex{
&&&&            \covN\ar[rdd] & \nonM \ar[rdd]      &  \mye\ar@{=}[d]\ar[ddr]      & \cofN\ar[r] &2^{\aleph_0} \\
&&&&                               & \mathfrak b\ar[u]  &  \mathfrak d &              \\
\aleph_1\ar[r] & \mfrak\ar[r] & \pfrak\ar[r]&\hfrak\ar[r]
& \addN\ar[uu] & \mye\ar@{=}[u] &  \covM\ar[u]& \nonN\ar[uu]
}
\]
    \caption{\label{fig:result}The model we construct in this paper; here $\mathfrak x\rightarrow \mathfrak y$ means
 that $\mathfrak x<\mathfrak y$.
    Any number of the $<$ signs can be replaced by $=$ as desired.}
    %\protect\\
    %This model corresponds to ``Version A'' (\ref{versionA}, %Fig.~\ref{fig:1022}). We also realise another ordering of %the Cicho\'{n} values, called ``Version B'' %(\ref{versionB}, Fig.~\ref{fig:1131}).}
\end{figure}

We are aware of the following ZFC provable relations between these cardinals:
\begin{equation}
    \label{eq:uqw523}
\mfrak\leq\pfrak=\tfrak\leq\hfrak\leq\gfrak,\quad \mfrak\leq\addN,\quad
\tfrak\leq\addM,\quad
\hfrak\leq\bfrak,\quad
\gfrak\leq\cof(\dfrak).
\end{equation}
Also, with the exception of $\mfrak$ and $\dfrak$, all the cardinals in~\eqref{eq:uqw523} are known to be regular (and uncountable), $2^{<\tfrak}=\cfrak$ and $\gfrak\le\cof(\cfrak)$. For details see e.g.\ Blass~\cite{Blass}; but for $\pfrak=\tfrak$ see~\cite{MSpt} with Malliaris,\footnote{However, only the trivial inequality $\pfrak\leq\tfrak$ is used in this text.} and
$\gfrak\leq\cof(\dfrak)$ follows from the fact that
$\cof((\omega,<)^\omega/\mathcal U)=\cof(\dfrak)$ for some ultrafilter $\mathcal U$, due to Canjar~\cite{MR1036675}, and $\gfrak\le \cof((\omega,<)^\omega/\mathcal U)$ for any ultrafilter $\mathcal U$, due to Blass and Mildenberger~\cite{MR1777781}.

Recently~\cite{GKS} constructed, assuming four strongly compact cardinals, a ZFC model where
the ten (non-dependent) Cicho\'n-characteristics are pairwise different.
This orders the characteristics as shown in Figure~\ref{fig:result}. %and~\ref{fig:1022}.
In~\cite{GKMS2} we give a construction that does not require
large cardinals.

%
%This construction was later improved
%with Brendle and Cardona~\cite{diegoetal}, now only using three
%strongly compact cardinals. We will
%refer to this order of the characteristics, and the construction to get them, as ``Version A'' from now on.

To continue with this line of work, we ask whether other classical cardinal characteristics of the continuum can be included and forced to be pairwise different. Our main result is that
%, in the consistency results corresponding to~\cite{diegoetal} and~\cite{KeShTa:1131},
we can additionally force that $\aleph_1 < \mfrak<\pfrak<\hfrak=\gfrak<\addN$, thus yielding a model where 13 classical cardinal characteristics are pairwise different.
%(modulo three strongly compact cardinals for version A, see Figure~\ref{fig:result}, and modulo four strongly compact cardinals for version B).

We now give an outline of this paper:
\smallskip

\noindent\textbf{S.~\ref{sec:prelim}, p.~\pageref{sec:prelim}: Preliminaries.} We review some aspects of the Cicho\'n's Maximum construction (the construction from~\cite{GKMS2} that gives 10 different values in Cicho\'n's diagram). In particular, we mention Blass-uniform characteristics and the LCU and COB properties.
%We briefly remark on the history of the result.
\smallskip

\noindent\textbf{S.~\ref{sec:nnr}, p.~\pageref{sec:nnr}: NNR extensions.} We define some classes of cardinal characteristics and show how they are affected (or unaffected) by extensions that do not add new ${<}\kappa$-sequences
for some regular $\kappa$; in particular: under ${<}\kappa$-distributive forcing extensions; and when intersecting the poset with some ${<}\kappa$-closed elementary submodel.\smallskip

\noindent\textbf{S.~\ref{sec:ma}, p.~\pageref{sec:ma}: $\bm{\mfrak}$.}
Using classical methods
of Barnett and Todor\v{c}evi\'{c}~\cite{TodorCell,To93,Barnett},
we modify the Cicho\'n's Maximum construction
to additionally force $\mfrak=\lambda_{\mfrak}$ for any given regular value $\lambda_{\mfrak}$ between $\aleph_1$ and $\addN$.

%Alternatively, we can modify the ``old'' constructions
%with Boolean ultrapowers
%to get, in addition to the Cicho\'n values,  %$\mfrak=\lambda_{\mfrak}$, under the condition that $\lambda_{\mfrak}$ is less than the smallest strongly compact cardinal used for the ultrapowers.

In addition to $\mfrak$,
we can control the Knaster-numbers $\mfrak(k\textnormal{-Knaster})$ as well.
But this
does not give a larger number of simultaneously
different characteristics (as all Knaster numbers bigger than $\aleph_1$ have the same value,
which is also the value of $\mfrak(\textnormal{precaliber})$).
We give models for all possible constellations (at least for regular $\lambda$): All Knaster numbers (and $\mfrak(\textnormal{precaliber})$) can be
$\aleph_1$. And
there can be a $k\ge 1$ such that
$\mfrak(\ell\textnormal{-Knaster})=\aleph_1$ for all
$1\le \ell<k$ and $\mfrak(\ell\textnormal{-Knaster})=\lambda$
for $\ell\ge k$. (For notational convenience, we identify 1-Knaster with ccc.)
%
%\bluetext{D says: It is true that we can force $\mathfrak p=\mathfrak b$ when modifying the forcing for the left part, but I wonder if this is true in the Cicho\'n's maximum construction: the best we can say for the moment is that $\mathfrak p \geq\kappa$ (after intersecting with ${<}\kappa$-closed $N^*$).
%
%Also, we should indicate that we abuse notation to say that $1$-Knaster means ccc.}
\smallskip

\noindent\textbf{S.~\ref{sec:precaliber}, p.~\pageref{sec:precaliber}: $\bm{\mfrak(\textbf{precaliber})}$.} We deal with a case that was
left open in the previous section:
We construct a model where all Knaster numbers
are $\aleph_1$, and the precaliber number is some regular $\lambda>\aleph_1$.
%For this purpose, we construct a ccc poset $P_{\precal}$ forcing $\mfrak(\textnormal{precaliber})\leq\lambda$ and such that this inequality is preserved after $\lambda$-Knaster posets.
\smallskip

\noindent\textbf{S.~\ref{sec:h}, p.~\pageref{sec:h}: $\bm{\hfrak}$.} Given a poset $P$, we show how to obtain a complete subposet $P'$ of $P$ forcing smaller values to $\gfrak$ (and thus $\hfrak\le\gfrak$), %Also, $\cfrak$,
% NOTE: true that cfrak also gets smaller, but this is kind of not the point here and only confuses at this stage
while preserving certain other values for cardinal characteristics already forced by $P$. This method allows us to get $\pfrak=\hfrak=\gfrak$.
\smallskip

\noindent\textbf{S.~\ref{sec:p}, p.~\pageref{sec:p}: $\bm{\pfrak}$.} Based on a result with Dow~\cite{longlow}, we show that the product of a $\xi$-cc poset $P$ with
the poset $\xi^{<\xi}$ may add a tower of length $\xi$, while preserving the cardinal $\hfrak$ above $\xi$ and the values for the Cicho\'n-characteristics that were already forced by $P$.

This allows us to prove the main theorem, thirteen pairwise different characteristics.

\noindent\textbf{S.~\ref{sec:ext}, p.~\pageref{sec:ext}: Extensions.} We remark on
alternative initial  forcings (i.e., forcings for the left hand side of Cicho\'n's diagram) and an alternative order.
\smallskip

% \noindent\textbf{Collapsing the gaps (Section \ref{sec:coll}).}
% In the ``old'' models for 10 different
% Cicho\'n characteristics, we require
% compact cardinals between the left hand side characteristics.
% So we do not know, for example, that $\covN$ could be the successor of $\addN$.

% In this paper we show how to avoid this limitation.
% (However, strongly compact cardinals are still required for the consistency results.) After forcing 10 (or more) different values with a forcing $P$ as before, we take products with collapse-posets to shorten the gaps between any pair of successive cardinals characteristics (including those on the left side).
% %The effect of these products on cardinal characteristics is a consequence of the results in Section~\ref{sec:nnr}, however we dedicate Section~\ref{sec:coll} to the special case of the collapse and to reducing gaps.
% %Instead of making the gaps small,
% We can also make a gap vanish
% altogether, which gives the new result that in Cicho\'n's diagram,
% not only are the inequalities between all entries consistent,
% but we can even replace arbitrarily many of the $<$ with $=$.
% %(see Definition~\ref{def:lecons} for a formalization.)

% We remark that these techniques are not compatible with the methods of Section~\ref{sec:h},
% so we do not know how to control $\hfrak$ here.

\begin{notation*}
%\begin{enumerate}
    %\item
When we are investigating a
characteristic
$\mathfrak x$ and plan to force a specific value
to it, we will usually call this value $\lambda_{\mathfrak x}$.
%For example, for~\ref{versionA}
%$\lambda_2=\lambda_{\covN}$, whereas for~\ref{versionB}
%$\lambda_2=\lambda_{\mathfrak b}$.
Let us stress that
calling a cardinal $\lambda_{\mathfrak x}$ is \emph{not} an implicit \emph{assumption}
that
$P\Vdash \mathfrak x = \lambda_{\mathfrak x}$ for the $P$
under investigation; it is just an (implicit) declaration
of intent.
% \item
% Whenever we base an argument on the construction
% above, and say ``we can modify the construction to additionally force\dots'', we implicitly
% assume that the desired values $\lambda_{\mathfrak x}$
% for the ``old'' characteristics satisfy
% the assumptions we made in the ``old'' constructions
% (such as ``$\lambda_{\mathfrak x}$ is regular'').
% \end{enumerate}
\end{notation*}

\subsection*{Acknowledgements}
We would like to thank Teruyuki Yorioka for pointing out the reference~\cite{Barnett}, which is cited in Section~\ref{sec:ma}.

We would like to thank an anonymous referee for suggesting several improvements (and for pointing out that $\gfrak\leq\cof(\dfrak)$).

\section{Preliminaries}\label{sec:prelim}

We mention some of the required definitions and
constructions from~\cite{GKS} and~\cite{GKMS2}.
We will not give all required proofs and not even the
complete construction, as it is rather involved.
We will have to assume that the reader either
knows this construction, or is willing to accept it as
a blackbox.

\subsection{LCU and COB, the initial forcing \texorpdfstring{$\Ppre$}{ } for the left side}\label{subsec:blass}

%In the following, we investigate Borel (or, more generally, analytic or co-analytic) relations $R$ on $\omega^\omega$.
%We investigate characteristics defined from Borel relations on $\omega^\omega$.

\begin{definition}\label{def:blassu}
   A \emph{Blass-uniform cardinal characteristic} is a characteristic of the form
   \[\mathfrak d_R:=\min\{|D|:D\subseteq\omega^\omega\text{\ and }(\forall x\in\omega^\omega)\,(\exists y\in D) \ xRy\}\]
   for some Borel\footnote{We could just as well assume that $R$
is analytic or co-analytic. More specifically, for
all results in this paper, it is enough
to assume that $R$ is absolute between the extensions we consider;
in our case between extensions that do not add new reals. So even
projective relations would be OK.
However, all concrete relations that we will actually use are Borel, even of very low rank. Regarding ``on $\omega^\omega$'', see Remark~\ref{rem:omegaomega}.} $R$. To avoid trivialities, we will only consider relations $R$ for which $\dfrak_R$ (and the dual $\bfrak_R$ below) are well defined.\footnote{I.e., $(\forall x\in\omega^\omega)\,(\exists y,z\in\omega^\omega)\, x R y\,\wedge\,\lnot z  R x$.}
   %where $R$ is a ${\bf \Sigma}^1_1\cup{\bf \Pi}^1_1$ relation in $\omega^\omega$. DM: Why is this commented? we need $R$ as a definition (not as a fixed object), at least we need to mention that the formula defining $R$ is absolute between transitive models of ZFC.
\end{definition}

Such characteristics have been studied systematically since at least the 1980s by many authors, including
Fremlin~\cite{zbMATH03891346}, Blass~\cite{MR1234278, Blass} and
Vojt\'{a}\v{s}~\cite{MR1234291}.

Note that its dual cardinal
\[{\mathfrak b}_R:=\min\{|F|:F\subseteq\omega^\omega\text{\ and }(\forall y\in\omega^\omega)\,(\exists x\in F) \ \neg xRy\}\]
is also Blass-uniform because $\mathfrak{b}_R=\mathfrak{d}_{R^\perp}$ where  $xR^\perp y$ iff $\neg(yRx)$.

\begin{remark*}
All Blass-uniform characteristics in this paper, and many others,
such as those in Blass' survey~\cite{Blass} or those in~\cite{GoSh:448}, are in fact
of the form $\bfrak_R$ or $\dfrak_R$ for some $\Sigma^0_2$ relation~$R$
which is invariant under finite modifications of its arguments.
When we restrict to such relations,  there is no ambiguity
as to which
Blass-uniform cardinal characteristics are of the form $\bfrak_R$ and which
are of the form $\dfrak_R$.   It was shown by Blass~\cite{MR1234278}
that for such relations $R$ we must have $\bfrak_R\le \nonM$
and $\dfrak_R\ge \covM$, thus $\bfrak_R$ is always on the left side
of Cicho\'n's diagram, and $\dfrak_R$ is on the right side.
\end{remark*}

\begin{remark}\label{rem:omegaomega}
   It can be more practical to
   consider more generally
   relations on $X\times Y$
   for some Polish spaces $X$, $Y$ other than $\omega^\omega$, in particular as
   many examples of Blass-uniform cardinals are
   naturally defined in such spaces.

   To cover such cases,
   one can either modify the definition, or use a
   Borel isomorphisms to translate the relation to $\omega^\omega$.
\end{remark}

%\begin{example}\label{exm1}

The Cicho\'n-characteristics are all Blass-uniform,
defined by
natural\footnote{The relations $R$ used to define
    the following characteristics are ``natural'', but not entirely ``canonical''. For example, a different choice
    of a natural relation $R$ such that $\mathfrak b_R=\mathfrak s$ leads to a different dual $\mathfrak d_R=\mathfrak r_\sigma$. See ~\cite[Example 4.6]{Blass}.}
relations. Accordingly, they come in pairs
$(\mathfrak b_R,\mathfrak d_R)$ for the according
Borel relation $R$:\\
%\begin{equation}\label{eq:cichonpairs}
    $(\addN,\cofN)$, $(\covN,\nonN)$,
    $(\addM,\cofM)$, $(\nonM,\covM)$,
     and $(\mathfrak b,\mathfrak d)$.
%\end{equation}
        (The last pair, for example, is defined by
        eventual domination $\le^*$.)

Another example for a Blass-uniform pair is
$(\mathfrak s, \mathfrak r)=(\mathfrak b_R,\mathfrak d_R)$ where
$\mathfrak s$ is splitting number and $\mathfrak r$ the reaping number
and  $R$ is the relation on $[\omega]^{\aleph_0}$ that states $xRy$ iff ``$x$ does not split $y$''.

We will often have a situation where
$(\mathfrak b_R,\mathfrak d_R)=(\lambda,\mu)$ is ``strongly
witnessed'', as follows:

\begin{definition}\label{def:lcucob}
Fix a Borel relation $R$,
$\lambda$ a regular cardinal and
$\mu$ an arbitrary cardinal. We define two properties:\footnote{In \cite{diegoetal} (and in other related work), a family with $\LCU_R(\lambda)$ is said to be \emph{strongly $\lambda$-$R$-unbounded of size $\lambda$}, while a family with $\COB_R(\lambda,\mu)$ is said to be \emph{strongly $\lambda$-$R$-dominating of size $\mu$}.}

\begin{description}[labelindent=0pt] % please leave the itemization here.
       % It makes the definitions stand out more,
       % and easier to find.
\item[Linearly cofinally unbounded]
    $\LCU_R(\lambda)$ means: There is a family
    $\bar f=(f_{\alpha}:\alpha<\lambda)$  of
    reals
    such that:
    %for each $P$-name~$g$ there is some $\alpha\in\lambda_i$ with
    \begin{equation}\label{eq:LCU}
    (\forall g\in \omega^\omega)\, (\exists\alpha\in\lambda)\,(\forall \beta\in \lambda\setminus \alpha) \ \lnot f_{\beta} R g.
    \end{equation}

\item[Cone of bounds]
    For $\lambda\le\mu$,
    $\COB_R(\lambda,\mu)$ means:\footnote{Note that $\COB_R(\lambda,\mu)$
    for $\lambda>\mu$ would violate our assumption
    that $\bfrak_R$ is well-defined: In that case,
    $\COB_R(\lambda,\mu)$ would imply that $\mu$ has a top element with respect to the order $\trianglelefteq$, so there is an $x\in\omega^\omega$ with $y R x$ for all $y$.}
    There is a $\mathord<\lambda$-directed partial order $\trianglelefteq$ on $\mu$,\footnote{I.e., every subset of $\mu$ of
    cardinality ${<}\lambda$ has a $\trianglelefteq$-upper bound}
    and a family $\bar g= (g_{s}:s\in \mu)$ of
    reals such that
    \begin{equation}\label{eq:COB}
    (\forall f\in\omega^\omega)\, (\exists s\in \mu)\, (\forall t\trianglerighteq s)\  f R g_{t}.
    \end{equation}

\end{description}
% When the relation $R$ is clear from the context, we may just write $\LCU(\lambda)$ and $\COB(\lambda,\mu)$. DM: Why is this sentence uncommented? We are actually abbreviating the $R$ in several places.
\end{definition}

\begin{fact}\label{fact:bla23424}
    $\LCU_R(\lambda)$ implies $\mathfrak b_R \le \lambda\leq\mathfrak d_R$.

    $\COB_R(\lambda,\mu)$
    implies $\mathfrak b_R \ge \lambda$ and
    $\mathfrak d_R \le \mu$.
\end{fact}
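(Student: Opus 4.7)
\smallskip

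The plan is to verify each of the four inequalities separately, in each case by unwinding the definition of the relevant cardinal and exhibiting (or refuting) a witness.

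\smallskip

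For $\LCU_R(\lambda)$, I first verify $\mathfrak b_R \le \lambda$: the family $F := \{f_\alpha : \alpha<\lambda\}$ is itself $R$-unbounded, since for any $g\in\omega^\omega$ clause~\eqref{eq:LCU} produces some $\alpha$ (in fact, a tail of $\alpha$'s) with $\lnot f_\alpha R g$, so $|F|\le\lambda$ bounds $\mathfrak b_R$. For $\lambda \le \mathfrak d_R$, I argue by contradiction: assume $D\subseteq\omega^\omega$ is $R$-dominating with $|D|<\lambda$. For each $y\in D$, by~\eqref{eq:LCU} pick $\alpha_y<\lambda$ such that $\lnot f_\beta R y$ for all $\beta\ge\alpha_y$. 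By regularity of $\lambda$ and $|D|<\lambda$, the ordinal $\alpha^*:=\sup\{\alpha_y:y\in D\}$ lies below $\lambda$. Then $f_{\alpha^*}$ is not $R$-dominated by any member of $D$, contradicting that $D$ is dominating.

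\smallskip

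For $\COB_R(\lambda,\mu)$, the bound $\mathfrak d_R\le\mu$ is immediate: the family $\{g_s:s\in\mu\}$ is $R$-dominating by~\eqref{eq:COB} (apply it and take $t=s$). For $\mathfrak b_R\ge\lambda$, suppose towards a contradiction that $F\subseteq\omega^\omega$ is $R$-unbounded with $|F|<\lambda$. For each $f\in F$, use~\eqref{eq:COB} to choose $s_f\in\mu$ with $f R g_t$ for every $t\trianglerighteq s_f$. Because $\trianglelefteq$ is $\mathord{<}\lambda$-directed and $|F|<\lambda$, the set $\{s_f:f\in F\}$ has a common $\trianglelefteq$-upper bound $t^*\in\mu$. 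Then $f R g_{t^*}$ for every $f\in F$, so $g_{t^*}$ witnesses that $F$ is $R$-bounded, contradicting our assumption.

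\smallskip

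Nothing here is really an obstacle; the entire fact is a direct unpacking of the definitions, the one nontrivial ingredient being the use of regularity of $\lambda$ (for the LCU side) and of $\mathord{<}\lambda$-directedness of $\trianglelefteq$ (for the COB side) to absorb a family of fewer than $\lambda$ witnesses into a single one. The only point worth flagging is the footnote's implicit use of well-definedness of $\mathfrak b_R$: in the COB case one needs $\lambda\le\mu$ (which is assumed) to rule out a top element of $\trianglelefteq$ trivializing the situation, and in both cases one uses that $\mathfrak b_R,\mathfrak d_R$ are defined, so the families $F,D$ under consideration actually exist.
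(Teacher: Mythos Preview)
Your proof is correct. The paper states this as a \texttt{Fact} without proof, so there is no paper argument to compare against; your unpacking of the definitions (using regularity of $\lambda$ for the $\LCU$ half and $\mathord{<}\lambda$-directedness for the $\COB$ half) is exactly the intended verification.
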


\begin{remark}\label{rem:cobmonoton}
$\COB_R(\lambda,\mu)$ clearly implies $\COB_R(\lambda',\mu)$ whenever $\lambda'\le \lambda$.
The property $\COB_R(2,\mu)$, the weakest of these notions, just says that
there is a witness for $\mathfrak d_R\le \mu$,  or in other words: there is
an $R$-dominating\footnote{Formally:
$D\subseteq \omega^\omega$ is $R$-dominating iff
$(\forall x\in \omega^\omega)\,(\exists y\in D) \ x R y$.}
family of size $\mu$.

Also, $\COB_R(\lambda,\mu)$ implies $\COB_R(\lambda,\mu')$ whenever $\mu'\ge \mu$.
\end{remark}

Informally, we call the objects
$\bar f$
in the definition
of $\LCU$
and $(\trianglelefteq,\bar g)$ for $\COB$  ``strong witnesses'',
and say that the corresponding cardinal inequalities (or equalities)
are ``strongly witnessed''.

In~\cite{GKS} (building on~\cite{GMS}) the following is shown:
\begin{lemma}\label{lem:oldleft}
   Assume GCH and $\aleph_1<\nu_1<\nu_2<\nu_3<\nu_4<\theta_\infty$
   are all successors of regular cardinals.
   Then there is a ccc countable support iteration $\Ppre$
   of length $\theta_\infty+\theta_\infty$
   forcing that
   \[\aleph_1<\addN=\nu_1<\covN=\nu_2<\bfrak=\nu_3<\nonM=\nu_4<\cfrak=\theta_\infty.\]
   Moreover, all the equalities are strongly witnessed;
   all iterands in $P$ are $(\sigma, k)$-linked (see Definition~\ref{DefKnaster}) for all $k$;
   and in the first $\theta_\infty$ many steps we add Cohen
   reals.
\end{lemma}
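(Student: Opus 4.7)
The plan is to build $\Ppre$ as a ccc iteration of length $\theta_\infty + \theta_\infty$, using a bookkeeping function to schedule four kinds of iterands, one for each of the characteristics to be controlled. To push $\addN$, $\covN$, $\bfrak$, $\nonM$ up to $\nu_1,\nu_2,\nu_3,\nu_4$ respectively, I would use (variants of) amoeba forcing, random forcing, a Hechler-type dominating forcing, and Cohen forcing; each of these is naturally $(\sigma,k)$-linked for all $k$, so the linkedness clause of the lemma comes for free. The first $\theta_\infty$ coordinates are devoted to Cohen reals together with whatever preparatory iterands are needed to make COB witnesses of the appropriate shape available; the second $\theta_\infty$ coordinates carry out the bookkeeping that turns each inequality into an equality at the prescribed value.

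The heart of the argument is to arrange, for each Blass-uniform pair $(\mathfrak{b}_R,\mathfrak{d}_R)$ on the left side of Cicho\'n's diagram, a $\LCU_R(\nu_i)$ witness capping $\mathfrak{b}_R$ from above by $\nu_i$, together with a $\COB_R(\nu_i,\theta_\infty)$ witness that gives $\mathfrak{b}_R\geq\nu_i$ and $\mathfrak{d}_R\leq\theta_\infty$. By Fact~\ref{fact:bla23424} these pin the four left-hand characteristics at their desired values. The LCU witness is assembled from a cofinal $\nu_i$-sequence of reals generically added by the $R$-relevant iterands at carefully chosen stages; the COB witness organizes the reals added along the full iteration using a $<\nu_i$-directed order on $\theta_\infty$, which exists under GCH and the successor-of-regular hypothesis on $\nu_i$. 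GCH together with a standard nice-name counting argument and the iteration length then yield $\cfrak=\theta_\infty$.

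The main obstacle is preservation: the iterand used to drive $\mathfrak{x}_i$ up to $\nu_i$ will in general also add reals relevant to $\mathfrak{x}_j$ for $j<i$, and could therefore destroy the $\LCU_{R_j}(\nu_j)$ witness already built. The GMS/GKS remedy, which I would follow, is to replace the ``full'' canonical forcing at each step by its intersection with a suitable $<\nu_i$-closed elementary submodel of size $\nu_j$, so that the iterand does not introduce new random or Cohen reals with parameters outside the submodel and the previously built LCU witnesses are preserved. The bulk of the (omitted) technical work is then to verify that these submodel-restricted forcings still force the intended characteristic up to its target, remain ccc, inherit $(\sigma,k)$-linkedness, and preserve all previously established $\LCU_{R_j}$ witnesses---via the standard finite-support preservation theorems for Blass-uniform unboundedness in the style of Bartoszy\'nski--Judah.
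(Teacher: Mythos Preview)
The paper does not prove this lemma; it cites \cite{GKS} (building on \cite{GMS}) and only sketches the mechanism afterward. Your broad outline---bookkeeping iteration, LCU/COB witnesses, preservation as the crux---matches that sketch, but there are genuine gaps.

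First, Cohen forcing is the wrong iterand for $\nonM$. For the relation $R$ with $(\bfrak_R,\dfrak_R)=(\nonM,\covM)$, the $\COB$ witness requires generic reals that are \emph{eventually different} from all ground-model reals; a Cohen real is infinitely often equal to every ground-model real and contributes nothing here (indeed, in the Cohen model $\nonM=\aleph_1$). The construction uses eventually different forcing $\mathbb{E}$, as the paper says explicitly in the history subsection and in the sketch following the lemma (``partial eventually different'' iterands).

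Second, your submodel remedy is incoherent as stated: a ``$<\nu_i$-closed elementary submodel of size $\nu_j$'' with $\nu_j<\nu_i$ cannot exist. The actual preservation tool, invoked by the paper right after the lemma, is $(\nu,R)$-\emph{goodness} (Judah--Shelah, Brendle). Partial amoeba and partial random iterands are taken small so as to be automatically good; $\sigma$-centered iterands are good for $\addN$ and $\covN$ regardless of size. This is related to, but not the same as, what you describe.

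Third---the main technical point you miss---preserving $\LCU(\nu_3)$ for $\bfrak$ through the $\mathbb{E}$-steps is not handled by smallness or $\sigma$-centeredness. The innovation of \cite{GMS} here is the use of \emph{ultrafilter limits} of $\mathbb{E}$-conditions; the paper flags exactly this step as ``more cumbersome''. Without this ingredient your sketch gives no reason why $\bfrak$ stays at $\nu_3$ rather than rising to $\nu_4$.
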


In this work, we will modify this construction $\Ppre$ to
get similar iterations $P$ that allow us to add
additional characteristics. We claim that
these modifications will not change the fact that
the characteristics in Lemma~\ref{lem:oldleft} are strongly witnessed.
A reader who doesn't know the proof of Lemma~\ref{lem:oldleft} will hopefully trust us on this;
for the others we give
the (simple) argument:
\begin{itemize}
    \item We get the required $\COB$ properties simply
       by bookkeeping, when forcing with ``partial random'',
       or ``partial eventually different'', etc., forcings.
       This will not change when we add
       additional iterands (as long as,
       cofinally often, we choose the iterands as
       in the original construction).
    \item Fix a (left hand) Cicho\'n-characteristic $\xfrak$
        other than $\bfrak$.
        We get the strong witness $\LCU_R(\nu)$
        (for $R$ a relation connected to $\xfrak$ and
        $\nu$ the according $\nu_i$)
        because all the iterands are ``$(\nu,R)$-good''.

        Any forcing of size ${<}\nu$ is automatically
        good, so adding small iterands will not be a problem.

        Also, $\sigma$-centered forcings are always good
        for the characteristics $\addN$ and $\covN$.
    \item For $\bfrak$, it is more cumbersome to prove
        $\LCU_R(\nu_3)$,
        but at least it is clear that adding additional
        iterands of size ${<}\nu_3$ will not interfere with
        the proof.
\end{itemize}
So we can summarize:
\begin{clm}\label{claim:oldtonew}
We can add to $\Ppre$
arbitrary iterands that all are
\begin{itemize}
\item either of size ${<}\nu_1$,
\item or $\sigma$-centered and of size ${<}\nu_3$,
\end{itemize}
        and still force strong witnesses for the
        Cicho\'n-characteristics of Lemma~\ref{lem:oldleft}.
\end{clm}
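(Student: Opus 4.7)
The plan is to verify, one characteristic at a time, that the two kinds of "strong witnesses" produced by the original construction $\Ppre$ survive the insertion of arbitrary additional iterands of the allowed shape. Concretely, for each Cichon-characteristic $\xfrak\in\{\addN,\covN,\bfrak,\nonM\}$ on the left of the diagram, with associated Borel relation $R$ and target value $\nu\in\{\nu_1,\nu_2,\nu_3,\nu_4\}$, I need to show that the new iteration $P$ still forces $\LCU_R(\nu)$, and that for each $\xfrak\in\{\cfraki,\nonM,\bfrak,\covN\}$ on the dual side the corresponding $\COB$ property is preserved.

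First I would handle the $\COB$ side. Each $\COB$ witness in the original construction is produced by bookkeeping: at stages in some cofinal set $S\subseteq\theta_\infty+\theta_\infty$ the iteration chooses a carefully prescribed iterand (partial random, partial eventually different, etc.) and enlarges a generic family that witnesses the required cone-of-bounds statement. The key observation is that bookkeeping only needs a cofinal set of "its own" stages; it is insensitive to what is forced at the other stages provided those forcings do not destroy the reals already added to the witness family. Since any iterand in the enlarged iteration is either of size ${<}\nu_1$ or $\sigma$-centered, and in both cases is ccc (so that the iteration theorems used in \cite{GKS, GKMS2} still apply), the previous $\COB$ witnesses remain $\COB$ witnesses in~$P$.

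Next I would turn to $\LCU$. For the four characteristics $\xfrak\ne\bfrak$, the proof of $\LCU_R(\nu)$ in \cite{GKS,GKMS2} relies on every iterand being $(\nu,R)$-good, a property that is preserved under finite support / countable support iterations as appropriate. Here I would invoke two standard facts: (a) any forcing of size ${<}\nu$ is automatically $(\nu,R)$-good for every relevant $R$; and (b) every $\sigma$-centered forcing is $(\nu,R)$-good for the relations defining $\addN$ and $\covN$. Matching the two cases in the claim against the four target cardinals, we see: an iterand of size ${<}\nu_1$ is good for all four characteristics (since $\nu_1\le \nu_i$), while a $\sigma$-centered iterand of size ${<}\nu_3$ is harmless for $\addN$ and $\covN$ by (b), and is harmless for $\nonM$ because its size is below~$\nu_4$, so by (a) it is $(\nu_4,R_{\nonM})$-good. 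Hence goodness is preserved for each of the four relevant relations.

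The one point that requires genuine care is $\bfrak$, where the argument for $\LCU_{\le^*}(\nu_3)$ in \cite{GKMS2} is more delicate and does not pass through a simple goodness-of-iterand statement. Here I would go back into the proof and note that the combinatorial obstruction it rules out lives in a name of size ${<}\nu_3$, so that inserting extra iterands of size ${<}\nu_3$ (either kind in our list) cannot manufacture a new obstruction; the original witness family $\bar f=(f_\alpha:\alpha<\nu_3)$ thus still satisfies \eqref{eq:LCU} for the relation defining $\bfrak$ in the extended iteration. This verification is the main obstacle, and essentially the only place where one has to reopen the proof of Lemma~\ref{lem:oldleft} rather than simply quote its conclusion; once it is done, combining all the preceding items yields the claim.
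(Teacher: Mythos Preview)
Your proposal is correct and follows essentially the same outline as the paper's own justification (which is the bulleted list immediately preceding the claim): $\COB$ survives because it is produced by bookkeeping at cofinally many stages; $\LCU$ for $\addN$, $\covN$, $\nonM$ survives because the inserted iterands are $(\nu_i,R)$-good (small forcings are always good, $\sigma$-centered forcings are good for $\addN$ and $\covN$, and size ${<}\nu_3<\nu_4$ handles $\nonM$); and for $\bfrak$ one simply observes that extra iterands of size ${<}\nu_3$ do not disturb the more delicate argument. One cosmetic slip: you write ``the four characteristics $\xfrak\ne\bfrak$'', but there are only three.
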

(Of course these new iterands have to be added in a way
so that we still use the old iterands unboundedly
often; we cannot just add new iterands at the end.)

\begin{remark*}
  Instead of the construction of~\cite{GKS},
  one can use alternative constructions that
  require weaker assumptions, cf.\ Section~\ref{sec:alternatives}.
\end{remark*}

\subsection{The Cicho\'n's Maximum construction}

As before, we will not require or describe the construction in detail, but only present the basic structure and certain properties.

The following is the main Theorem (3.1) of~\cite{GKMS2}. As we
will use the assumptions of the theorem repeatedly, we make them explicit:
\begin{assumption}\label{asm:main}
Assume GCH, and that
\begin{gather*}
\aleph_1\le\kappa
\le \lambda_\addN
\le \lambda_\covN
\le \lambda_\bfrak
\le \lambda_\nonM\le\\
\le \lambda_\covM
\le \lambda_\dfrak
\le \lambda_\nonN
\le \lambda_\cofN
\le \lambda_\infty
\end{gather*}
are regular cardinals, with the
possible exception of $\lambda_\infty$, for which we only require
$\lambda_\infty^{<\kappa}=\lambda_\infty$.
\end{assumption}

\begin{theorem}\label{thm:old}
Under these assumptions, there is a ccc poset $\Pfin$ forcing strong witnesses for
\begin{gather*}
\aleph_1\leq\addN=\lambda_\addN\leq\covN=\lambda_\covN\leq\bfrak=\lambda_\bfrak\leq\nonM=\lambda_\nonM\leq \\
%\leq
\covM=\lambda_\covM\leq\dfrak=\lambda_\dfrak\leq\nonN=\lambda_\nonN\leq\cofN=\lambda_\cofN\leq\cfrak=\lambda_\infty.
\end{gather*}
\end{theorem}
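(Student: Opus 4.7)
The plan is to construct $\Pfin$ as a ccc complete subforcing of a larger iteration obtained from Lemma~\ref{lem:oldleft}, using an elementary submodel restriction to force the right-side characteristics $\covM, \dfrak, \nonN, \cofN$ down to their intended values while preserving the strong witnesses for the left-side characteristics.

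First, apply Lemma~\ref{lem:oldleft} with $\nu_i$ set to $\lambda_\addN, \lambda_\covN, \lambda_\bfrak, \lambda_\nonM$ and $\theta_\infty = \lambda_\infty$, inserting small or $\sigma$-centered iterands as allowed by Claim~\ref{claim:oldtonew} to match the given cardinal arithmetic. This yields a ccc iteration $\Ppre$ strongly witnessing the left-side values; it forces $\cfrak = \lambda_\infty$ but at this stage all right-side characteristics also equal $\lambda_\infty$. To shrink them, fix a sufficiently large regular $\chi$ and build a $\subseteq$-increasing chain of ${<}\kappa$-closed elementary submodels
\[
M_{\covM} \esm M_{\dfrak} \esm M_{\nonN} \esm M_{\cofN} \esm H(\chi)
\]
with $|M_\mathfrak{y}| = \lambda_\mathfrak{y}$, each containing $\Ppre$ and all the LCU and COB witnessing families from Lemma~\ref{lem:oldleft}. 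Define $\Pfin := \Ppre \cap M_{\cofN}$.

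For the left side, the LCU families witnessing the values up to $\lambda_\nonM$ lie in $M_{\covM} \subseteq \Pfin$; since $\Pfin$ is a complete subposet, these families still strongly witness $\addN = \lambda_\addN$ through $\nonM = \lambda_\nonM$ in the $\Pfin$-extension. For the right side, the generic reals contributed by the smaller subforcings $\Ppre \cap M_\mathfrak{y}$ form an $R_\mathfrak{y}$-dominating family of size $\lambda_\mathfrak{y}$ for the Blass-uniform relation defining $\mathfrak{y}$, providing COB witnesses for $\mathfrak{y} \le \lambda_\mathfrak{y}$ via Fact~\ref{fact:bla23424}. The matching lower bounds $\mathfrak{y} \ge \lambda_\mathfrak{y}$ come from LCU witnesses generated by the cofinally placed generic reals (Cohen, random, eventually different) of $\Ppre$ that survive into $\Pfin$. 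Finally $\cfrak = \lambda_\infty$ holds because $|\Pfin| = \lambda_\infty$ and $\lambda_\infty^{<\kappa} = \lambda_\infty$.

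The main technical obstacle is verifying that each $\Ppre \cap M_\mathfrak{y}$ is a complete subforcing of $\Ppre$; this is the Goldstern-Shelah submodel method, and its applicability relies crucially on the structural information about $\Ppre$ guaranteed by Claim~\ref{claim:oldtonew}: because every iterand is either very small or $\sigma$-centered, the submodel-completeness can be checked inductively along the iteration and the ccc of $\Pfin$ is preserved. A secondary delicate point is the simultaneous coordination of the four submodels so that the distinctions $\lambda_{\covM}<\lambda_{\dfrak}<\lambda_{\nonN}<\lambda_{\cofN}$ are preserved, each right-side characteristic taking its own intended value rather than being overridden by a larger submodel's witnesses.
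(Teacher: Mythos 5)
The proposal has the high-level idea right (intersect $\Ppre$ with a ${<}\kappa$-closed elementary submodel and use completeness to preserve strong witnesses), but the quantitative setup is wrong in ways that break the argument. First, the paper's step (A) requires the left-side parameters $\nu_1<\nu_2<\nu_3<\nu_4<\theta_\infty$ to be chosen \emph{strictly above} $\lambda_\infty$, and then the submodel intersection brings every characteristic down to its intended value simultaneously; you instead set $\nu_i$ directly to the target values $\lambda_\addN,\dots,\lambda_\nonM$ and $\theta_\infty=\lambda_\infty$. With that choice the submodel method cannot produce the right-hand-side separations: the COB witnesses in $\Ppre$ for the left side live on index sets of size $\theta_\infty=\lambda_\infty$, and the intersections with your $M_{\mathfrak y}$ (of size $\lambda_{\mathfrak y}<\lambda_\infty$) would not obviously remain $<\nu_i$-directed, nor would they place $\cofN$ at $\lambda_\cofN$ as opposed to $\lambda_\infty$.

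Second, and more immediately fatal: you take $\Pfin:=\Ppre\cap M_{\cofN}$ with $|M_{\cofN}|=\lambda_{\cofN}$, so $|\Pfin|\le\lambda_{\cofN}\le\lambda_\infty$ and in general $|\Pfin|<\lambda_\infty$. That forces $\cfrak\le\lambda_{\cofN}$, contradicting your own claim that $\cfrak=\lambda_\infty$ holds ``because $|\Pfin|=\lambda_\infty$.'' In the paper's Lemma~\ref{lem:oldnew} the final submodel $N^*$ has size $\lambda_\infty$ (see the footnote: $N_8$ has size $\lambda_\infty$ and $N^*$ is built inside it). The separation of the four right-hand-side characteristics comes not from a nested chain of submodels of sizes $\lambda_\covM<\lambda_\dfrak<\lambda_\nonN<\lambda_\cofN$, but from the internal structure (mixing cardinality restrictions with closure properties at the auxiliary cardinals $\xi_i,\nu_i>\lambda_\infty$) of a single $N^*$ of size $\lambda_\infty$. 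Your sketch of where the COB/LCU witnesses for $\covM,\dfrak,\nonN,\cofN$ come from (``generic reals contributed by the smaller subforcings'') is not the mechanism used, and without the $\nu_i$ above $\lambda_\infty$ it cannot be made to work.
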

Note that $\kappa$ does not make much sense in this theorem,
as you can just set $\kappa=\aleph_1$ (resulting in the weakest requirement
$\lambda_\infty^{\aleph_0}=\lambda_\infty$).
Indeed this is what is done in~\cite{GKMS2} (where $\kappa$ is not mentioned at all);
but mentioning $\kappa$ explicitly here will be useful in Lemma~\ref{lem:oldnew} below.

The construction in~\cite{GKMS2} is as follows:
\begin{enumerate}[(A)]
    \item Pick a sequence of successors of regular cardinals (strictly) above $\lambda_\infty$:
\[
\xi_1<\nu_1<\xi_2<\nu_2<\xi_3<\nu_3<\xi_4<\nu_4<\theta_\infty,
\]
    \item Start with any initial $\kappa$-cc poset $\Ppre$
    for the ``left hand side'', which forces ``strong witnesses'' for
\[
\addN=\nu_1<\covN=\nu_2<\bfrak=\nu_3<\nonM=\nu_4<\cfrak=\theta_\infty.
\]
(So we can use the forcing of Lemma~\ref{lem:oldleft},
or any modification satisfying Claim~\ref{claim:oldtonew}.)
\end{enumerate}

The proof in~\cite{GKMS2} can then be formulated as the following:
\begin{lemma}\label{lem:oldnew}
  Under Assumption~\ref{asm:main},
  and given a forcing $\Ppre$ as in (A) and (B),
  there is a
  ${<}\kappa$-closed\/\footnote{%
    \cite{GKMS2} uses the case
    $\kappa=\aleph_1$, so we get only a countably closed
    $N^*$. But the the proof there works for any uncountable
    regular $\kappa$, with only the trivial change: We let $N_8$ be a ${<}\kappa$-closed model of size $\lambda_\infty$, and note that then $N^*$ is ${<}\kappa$-closed as well.}
elementary submodel $N^*$ of $H(\chi)$ such that  $\Pfin:=\Ppre\cap N^*$ witnesses Theorem~\ref{thm:old}.
\end{lemma}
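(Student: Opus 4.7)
The plan is to apply the elementary-submodel intersection method of~\cite{GKMS2}, with the only modification being the replacement of countable closure by ${<}\kappa$-closure throughout. I would build an increasing elementary chain $N_1\prec\cdots\prec N_8\prec H(\chi)$ of ${<}\kappa$-closed models with $|N_i|$ chosen so that the sizes realize the target right-hand cardinals (culminating in $|N_8|=\lambda_\infty$), each $N_i$ containing $\Ppre$ together with its bookkeeping data. Such models exist by GCH and $\lambda_\infty^{<\kappa}=\lambda_\infty$. Setting $N^*:=N_8$, the first thing to check is that $\Pfin:=\Ppre\cap N^*$ is a complete subposet of $\Ppre$: this follows from $\kappa$-cc combined with ${<}\kappa$-closure, since any maximal antichain of $\Pfin$ then has size ${<}\kappa$, hence lies in $N^*$ by closure, and is thereby maximal in $\Ppre$ by elementarity.

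For the left-hand side strong witnesses, the LCU families for $\addN,\covN,\bfrak,\nonM$ given by Lemma~\ref{lem:oldleft} and Claim~\ref{claim:oldtonew} are contained in $N^*$ by elementarity, and they remain LCU witnesses since $V^\Pfin\subseteq V^\Ppre$ has no more reals to ``bound'' them; the COB witnesses descend similarly, as their directed orders and cofinal families live in $N^*$. For the right-hand side, the submodel tower produces the witnesses: for each $\dfrak_R\in\{\covM,\dfrak,\nonN,\cofN\}$, the $\Ppre$-names lying in the appropriate $N_i$ (of size $\lambda_{\dfrak_R}$) yield a $\lambda_{\dfrak_R}$-sized $R$-dominating family in $V^\Pfin$, giving the upper bound $\dfrak_R\le\lambda_{\dfrak_R}$, while the matching COB lower bound is bookkept within $\Ppre$ using the ``partial'' iterands (partial random, partial eventually different, etc.).

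The main obstacle is the alignment of the bookkeeping of $\Ppre$ with the submodel tower: the iterands must, cofinally within each $N_i$, be chosen to provide the COB witnesses at the corresponding cardinality. This is exactly what is done in the $\kappa=\aleph_1$ case of~\cite{GKMS2}; here the only change is to uniformly replace ``countably closed'' by ``${<}\kappa$-closed'' at every step of the tower, legitimized by $\lambda_\infty^{<\kappa}=\lambda_\infty$ together with GCH below. Once this replacement is in place, the verification of the strong witnesses and of the cc-ness of $\Pfin$ in~\cite{GKMS2} goes through without further modification.
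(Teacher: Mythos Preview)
Your overall approach matches the paper's: the lemma is not proved in the paper itself but is explicitly treated as a blackbox reformulation of the~\cite{GKMS2} argument, with the footnote indicating that the only change is to take $N_8$ (and hence $N^*$) ${<}\kappa$-closed rather than countably closed. So identifying the method as ``run the GKMS2 submodel construction with ${<}\kappa$-closure throughout'' is exactly right.

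That said, your sketch of the GKMS2 mechanism contains inaccuracies that would matter if you actually had to carry it out. First, the right-hand-side bounds are not obtained as you describe: $\COB_R(\lambda,\mu)$ gives an \emph{upper} bound $\dfrak_R\le\mu$ and a \emph{lower} bound $\bfrak_R\ge\lambda$, so ``the matching COB lower bound'' for $\dfrak_R$ is not a thing; the lower bounds $\dfrak_R\ge\lambda_{\dfrak_R}$ come from $\LCU$ witnesses, and these are not sitting in $\Ppre$ at the target cardinals $\lambda_{\dfrak_R}$ (which are ${\le}\lambda_\infty$) but at the inflated $\nu_i$'s (which are ${>}\lambda_\infty$). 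The point of the submodel construction is precisely that intersecting with $N^*$ collapses the order types so that $|\nu_i\cap N^*|$ hits the desired $\lambda$-values; this is the heart of the~\cite{GKMS2} argument and is missing from your sketch. Second, the footnote's phrasing (``then $N^*$ is ${<}\kappa$-closed as well'') signals that $N^*$ is not simply $N_8$ but is built from the tower, so ``$N^*:=N_8$'' oversimplifies. Since the paper is content to blackbox all of this, your proposal is acceptable at the level the paper operates, but be aware that the internal mechanics you outline are not quite how the construction works.
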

(As usual, $\chi$ is a sufficiently large, regular cardinal.)

\subsection{History}\label{subsec:history} We briefly remark on the history of the result of this section.
%\begin{itemize}
%\item

A (by now) classical series of
results by various authors~\cite{MR719666,MR1233917,MR781072,JS,MR1022984,Krawczyk83,MR613787,MR735576,MR697963}
(summarized by Bartoszy{\'{n}}ski and Judah~\cite{BaJu}) shows that
any assignments of $\{\aleph_1,\aleph_2\}$ to the Cicho\'n-characteristics
that satisfy the well known ZFC restrictions is consistent.
This leaves the questions how to show that many values can be simultaneously different.
%\item
The ``left hand side'' part was done in~\cite{GMS} and uses
eventually different forcing $\mathbb E$
to ensure  $\nonM\ge \lambda_\nonM$
and ultrafilter-limits of $\mathbb E$  to show that
$\mathfrak{b}$ remains small.
It relies heavily on the notion of goodness, introduced in~\cite{JS} (with Judah) and by Brendle~\cite{Br}, and summarized in e.g.~\cite{GMS} or~\cite{CM19} (with Cardona).

%\item
Based on this construction,
\cite{GKS} uses Boolean ultrapowers to
get simultaneously different values for all (independent) Cicho\'n-characteristics, modulo four strongly compact cardinals.

For this, the construction for the left hand side first has to be modified to get a ccc forcing starting with a ground model satisfying GCH.

Then Boolean ultrapowers are applied
to separate the cardinals on the right side.
\cite{KTT} (with T\v{a}nasie and Tonti)
gives an introduction to the Boolean ultrapower construction.
Such Boolean ultrapowers are applied four times, once for each pair of cardinals on the right side that are separated.

For this it is required that there is a strongly compact cardinal between two values corresponding to adjacent cardinals characteristics on the left side, so the cardinals on this side are necessarily very far apart.
%\item
\cite{diegoetal} improves the left hand side construction of \cite{GMS} to include
$\covM<\mathfrak d=\nonN=\cfrak$. %(see \ref{versionAnb}).
This is achieved by using matrix iterations of partial Frechet-linked
posets (the latter concept is originally from~\cite{mejiavert}).
% This construction can be performed
% directly under GCH (so the step to make the forcing
% compatible with GCH is not necessary here).
Then the same method of Boolean ultrapowers as before  can be applied, in the same way, to force different values for all Cicho\'n-characteristics, modulo three strongly compact cardinals.

%\item
Finally, in~\cite{GKMS2}
we can get the result  without assuming large cardinals;
this is the construction we use in this paper.
%Both \ref{versionBnb} and \ref{versionB} were done
%The rest of the construction is identical to the one %in~\cite{GKS}.
%\end{itemize}
%\end{remarks*}

\section{Cardinal characteristics in extensions without new \texorpdfstring{${<}\kappa$}{<kappa}-sequences}\label{sec:nnr}

%In this paper, we will deal with NNR forcings (i.e.,
%forcings that do not
%add new reals), in particular with ${<}\kappa$-distributive
%forcings.
Let us consider ${<}\kappa$-distributive forcing extensions for some regular $\kappa$.
(In particular these extensions are NNR, i.e., do not
add new reals.)
For such extensions, we can also
preserve strong witnesses in some cases:

\begin{lemma}\label{lem:blassdistr}
Assume that $Q$ is $\theta$-cc and ${<}\kappa$-distributive
for $\kappa$ regular uncountable, and let $\lambda$ be a regular cardinal
and $R$ a Borel relation.
\begin{enumerate}
    \item If $\LCU_R(\lambda)$, then
    $Q\Vdash\LCU_R(\cof(\lambda))$.

    So if additionally $\lambda\le\kappa$ or $\theta\le\lambda$,
    then $Q\Vdash\LCU_R(\lambda)$.
    \item
    If $\COB_R(\lambda,\mu)$ and
    either $\lambda\le\kappa$ or $\theta\le\lambda$,
    then $Q\Vdash \COB_R(\lambda,|\mu|)$.

    So for any $\lambda$, $\COB_R(\lambda,\mu)$ implies
    $Q\Vdash \COB_R(\min(|\lambda|,\kappa),|\mu|)$.
\end{enumerate}
\end{lemma}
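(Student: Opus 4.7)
The unifying observation is that a $<\kappa$-distributive $Q$ with $\kappa$ regular uncountable adds no new reals (a new $f\colon\omega\to\omega$ would be a new $<\kappa$-sequence of ordinals), so $\omega^\omega$ is absolute between $V$ and $V[Q]$ and the Borel relation $R$ computes the same in both models. Hence every $V$-witness for $\LCU_R$ or $\COB_R$ is still a candidate witness in $V[Q]$; the only work is to verify the cofinality/cardinality parameters and the $<\lambda$-directedness.

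\textbf{Part~(1).} Let $\bar f=(f_\alpha:\alpha<\lambda)$ witness $\LCU_R(\lambda)$ in $V$. In $V[Q]$ put $\mu:=\cof(\lambda)^{V[Q]}$ and choose a cofinal increasing $(\alpha_\xi:\xi<\mu)$ in $\lambda$. The subfamily $(f_{\alpha_\xi}:\xi<\mu)$ witnesses $\LCU_R(\mu)$: any $g\in(\omega^\omega)^{V[Q]}$ already lies in $V$, so $V$-side $\LCU$ yields $\alpha<\lambda$ with $\neg f_\beta R g$ for every $\beta\in[\alpha,\lambda)$, and picking $\xi_0<\mu$ with $\alpha_{\xi_0}\ge\alpha$ supplies the required tail. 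It remains to check $\cof(\lambda)^{V[Q]}=\lambda$ under either extra hypothesis: if $\lambda\le\kappa$, a cofinal sequence in $\lambda$ of length $<\lambda\le\kappa$ would be a new $<\kappa$-sequence of ordinals, contradicting distributivity; if $\theta\le\lambda$, this is the standard preservation of cofinalities $\ge\theta$ by $\theta$-cc.

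\textbf{Part~(2).} Take $(\trianglelefteq,\bar g)$ witnessing $\COB_R(\lambda,\mu)$ in $V$ and reuse both objects in $V[Q]$ over the same index set $\mu$ (which now has cardinality $|\mu|^{V[Q]}$). The cone clause transfers by real-absoluteness. The substantive point is $<\lambda$-directedness in $V[Q]$: given $A\subseteq\mu$ with $|A|^{V[Q]}<\lambda$, produce a $\trianglelefteq$-upper bound. If $\lambda\le\kappa$, an enumeration of $A$ of length $<\kappa$ already lives in $V$, so $A\in V$; since cardinals $\le\kappa$ are preserved, also $|A|^V<\lambda$, and directedness in $V$ applies. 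If instead $\theta\le\lambda$, pick a $Q$-name $\dot e$ for a surjection from some $\gamma<\lambda$ onto $A$ and in $V$ set
\[
B:=\bigcup_{\xi<\gamma}\{\alpha\in\mu:(\exists q\in Q)\ q\Vdash\dot e(\xi)=\alpha\}.
\]
Each fiber is the range of an antichain in $Q$, hence has size $<\theta$, so $|B|^V\le\gamma\cdot\theta<\lambda$ (using that $\lambda$ is regular and $\ge\theta$). Since $A\subseteq B$ in $V[Q]$, any $V$-upper bound of $B$ bounds $A$.

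\textbf{The final ``So'' clause.} For arbitrary $\lambda$, apply Remark~\ref{rem:cobmonoton} in $V$ to weaken the parameter to $\min(\lambda,\kappa)\le\kappa$, then invoke the first half of~(2) to obtain $Q\Vdash\COB_R(\min(\lambda,\kappa),|\mu|)$, and apply Remark~\ref{rem:cobmonoton} once more, now in $V[Q]$, to further shrink the parameter to $\min(|\lambda|^{V[Q]},\kappa)$. The only non-routine step in the whole argument is the $\theta$-cc counting used for $<\lambda$-directedness; everything else is absoluteness bookkeeping together with the preservation of cardinals $\le\kappa$ (by distributivity) and cofinalities $\ge\theta$ (by ccness).
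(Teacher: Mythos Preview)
Your argument is correct and follows the same route as the paper's (which is terser: it simply notes that the witnesses persist because no new reals appear, and that $<\lambda$-directedness survives either because $\lambda\le\kappa$ rules out new small subsets or because $\theta$-cc lets one cover small sets from the ground model). One minor imprecision: the bound $|B|^V\le\gamma\cdot\theta<\lambda$ fails as written when $\theta=\lambda$, but your parenthetical appeal to regularity of $\lambda$ (a union of $\gamma<\lambda$ many sets each of size $<\theta\le\lambda$) already gives $|B|<\lambda$ directly, so the argument stands.
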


\begin{proof}
For (1) it is enough to assume that $Q$ does not add
reals:
Take a strong witness for $\LCU_R(\lambda)$. This object
still satisfies~\eqref{eq:LCU} in the $Q$-extension
(as there are no new reals),
but the index set will generally not be regular any more;
we can just take a cofinal subset of order type $\cof(\lambda)$
which will still satisfy~\eqref{eq:LCU}.

Similarly, a strong witness for $\COB_R(\lambda,\mu)$
still satisfies~\eqref{eq:COB}
in the $Q$ extension.
However, the index set is generally not ${<}\lambda$-directed
any more, unless we either assume $\lambda\le\kappa$ (as in that
case there are no new small subsets of the partial order)
or $Q$ is $\lambda$-cc (as then every small set in the extension is
covered by a small set from the ground model).
\end{proof}

%\bluetext{JK added the following}

%The proof obviously does not require that the extension is a
%forcing extensions, more ``generally'' we have:
%\begin{lemma}
%If $V_1\subseteq V_2$ are (to simplify notation)
%transitive class models with the same ${<}\kappa$-sequences of %ordinals, and $V_1\models \LCU_R(\lambda)$,
%then $V_2\models \LCU_R(|\lambda|)$
%(and, independently, $V_1\models \mathfrak{d}_R\le \lambda$
%implies $V_2\models \mathfrak{d}_R\le \lambda$
%
%\end{lemma}

If $P$ forces strong witnesses, then any complete subforcing
that includes names for all witnesses also forces strong witnesses:

\begin{lemma}\label{lem:blasssub}
Assume that $R$ is a Borel relation, $P'$ is a complete
subforcing of $P$, $\lambda$ regular and $\mu$ is a cardinal,
both preserved in the $P$-extension.
\begin{enumerate}[(a)]
    \item If $P\Vdash\LCU_R(\lambda)$
witnessed by some $\dot{\bar{f}}$, and
 $\dot {\bar f}$ is actually a $P'$-name, then
 $P'\Vdash\LCU_R(\lambda)$.
 \item If $P\Vdash\COB_R(\lambda,\mu)$
witnessed by some $(\dot{\trianglelefteq},\dot {\bar g})$, and
$(\dot{\trianglelefteq},\dot {\bar g})$ is actually a $P'$-name, then
 $P'\Vdash\COB_R(\lambda,\mu)$.
\end{enumerate}
\end{lemma}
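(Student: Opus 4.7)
Both parts follow from a straightforward absoluteness argument, exploiting that since $P'$ is a complete subforcing of $P$, every $P'$-generic extension $V_{P'}$ sits inside some $P$-generic extension $V_P$. In particular, every real appearing in $V_{P'}$ already appears in $V_P$, and Borel relations (such as $R$) are absolute between the two models. This is exactly the content of the hypothesis ``$\dot{\bar f}$ (resp.\ $(\dot\trianglelefteq,\dot{\bar g})$) is actually a $P'$-name'': the indexing data witnessing the property is available in $V_{P'}$.

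For (a), since $\dot{\bar f}$ is a $P'$-name, its evaluations in $V_{P'}$ and $V_P$ agree, giving the same sequence $\bar f=(f_\alpha:\alpha<\lambda)$. Given any $g\in\omega^\omega\cap V_{P'}$, I would apply the hypothesis $\LCU_R(\lambda)$ in $V_P$ to obtain an $\alpha<\lambda$ satisfying \eqref{eq:LCU}; Borel-absoluteness of $R$ then gives the same statement in $V_{P'}$. The one side issue is that $\LCU_R(\lambda)$ is defined for regular $\lambda$, but regularity of $\lambda$ in $V_P$ transfers downward to $V_{P'}$: any short cofinal sequence in $\lambda$ witnessing a collapse in $V_{P'}$ would already lie in $V_P$, contradicting preservation there.

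Part (b) is analogous, with two extra checks. First, $\dot\trianglelefteq$ remains ${<}\lambda$-directed in $V_{P'}$: any $X\subseteq\mu$ of size ${<}\lambda$ in $V_{P'}$ also belongs to $V_P$ (since $V_{P'}\subseteq V_P$), so has a $\trianglelefteq$-upper bound there, and being an upper bound of $X$ is an absolute property of an element of $\mu$. Second, $\mu$ remains a cardinal in $V_{P'}$ by the same downward transfer used for $\lambda$ in (a). Then for any $f\in\omega^\omega\cap V_{P'}$, the witness $s\in\mu$ produced by $\COB_R(\lambda,\mu)$ in $V_P$ transfers back by Borel-absoluteness of $R$.

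There is no genuine obstacle; the proof is really just bookkeeping about which model contains which object. The substance of the lemma is that $\LCU_R(\lambda)$ and $\COB_R(\lambda,\mu)$ are assertions about reals and a single Borel relation, parametrized by data $\bar f$ or $(\trianglelefteq,\bar g)$; once this parametrizing data lives in the smaller model and $R$ is absolute between the two, the universal quantifier over reals in $V_{P'}$ is absorbed by the (a priori stronger) universal quantifier over reals in $V_P$.
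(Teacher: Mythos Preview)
Your proposal is correct and follows essentially the same route as the paper: pass to the $P$-extension $V_2$ containing the $P'$-extension $V_1$, use that the witnessing data lies in $V_1$, and invoke absoluteness of the Borel relation $R$ together with $V_1\subseteq V_2$ to pull the $\Pi^1$-type statements~\eqref{eq:LCU} and~\eqref{eq:COB} down. The paper's proof is considerably terser (it dispatches COB with ``similar''), whereas you spell out the downward preservation of regularity of $\lambda$, of $\mu$ being a cardinal, and of ${<}\lambda$-directedness of $\trianglelefteq$; these are exactly the right side checks, and nothing more is needed.
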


\begin{proof}
Let $V_2$ be the $P$-extension and
$V_1$ the intermediate $P'$-extension.
For LCU:
\eqref{eq:LCU} holds in $V_2$, $V_1\subseteq V_2$
and $(f_i)_{i<\lambda}\in V_1$,
and $R$ is absolute between $V_1$ and $V_2$,
so \eqref{eq:LCU} holds in $V_1$. %(and if
%$\lambda$ is regular in $V_2$, then it is regular in $V_1$).
The argument for COB is similar.
\end{proof}

%We say that a universe $V'$ is an NNR (=``no new reals'') extension of $V$
%if $V\subseteq V'$ and $V'$ has the same reals as $V$.

We now define three properties of cardinal
characteristics
(more general than Blass-uniform)
that have implications for their behaviour
in extensions without new ${<}\kappa$-sequences.
We call these properties e.g.\
\plike\ to refer to the ``typical'' representative $\tfrak$.
But note that this is very superficial: There is no
deep connection or similarity to  $\mathfrak t$ for
all \plike\ characteristics, it is just that $\mathfrak t$
is a well known example for this property,
and ``\plike'' seems easier to memorize than
other names we came up with.
\begin{definition}
Let $\mathfrak x$ be a cardinal characteristic.
\begin{enumerate}[(1)]
\item
   $\mathfrak x$ is \emph{\plike},
   if it has the following form:
   There is a formula $\psi(x)$ (possibly with, e.g., real parameters)
   absolute between universe extensions that do not add
   reals,\footnote{Concretely,
   if $M_1\subseteq M_2$ are transitive (possibly class) models of a fixed, large
   fragment of ZFC,
   with the same reals, then
   $\psi$ is absolute between $M_1$ and $M_2$.}
   such that $\mathfrak x$ is the smallest cardinality
   $\lambda$ of a set $A$ of reals such that $\psi(A)$.

   All Blass-uniform characteristics are \plike;
   other examples are $\mathfrak t$, $\mathfrak u$, $\mathfrak a$ and $\mathfrak i$.
\item
    $\mathfrak{x}$ is called \emph{\hlike},
    if it satisfies the same,
    but with $A$ being a family of sets of reals (instead of just a set
    of reals).

    Note that \plike\ implies \hlike, as we can include ``the family of
    sets of reals is a family of singletons'' in $\psi$.
    Examples are $\mathfrak h$ and $\mathfrak g$.
\item
   $\mathfrak{x}$ is called \emph{\mlike},
   if it has the following form:
   There is a formula $\varphi$ (possibly with, e.g., real parameters)
   such that $\mathfrak x$ is the smallest cardinality
   $\lambda$  such that $H({\le}\lambda)\vDash \varphi$.

   Any infinite  \plike\ characteristic is \mlike:  % \ (assuming $c$):
   If $\psi$ witnesses \plike, then we can use
   $\varphi=(\exists A)\, [\psi(A)\&(\forall a\in A)\ a\text{ is a real}]$
   to get \mlike\ (since $H({\le}\lambda)$ contains all reals).
   Examples are\footnote{$\mathfrak m$ can be characterized as the smallest $\lambda$ such that there is in $H({\le} \lambda)$ a ccc forcing $Q$ and a family $\bar D$ of dense
   subsets of $Q$ such
    that ``there is no filter $F\subseteq Q$ meeting all $D_i$'' holds.}
   $\mathfrak m$, $\mathfrak m(\textnormal{Knaster})$, etc.
\end{enumerate}
\end{definition}
(Actually, we do not know anything about \plike\ characteristics
in general, apart from the fact
that they are both \mlike\ and \hlike.)
%and will not apply
%the only observation we
%mention about \hlike\ characteristics (which is that
%NNR extensions do not increase such characteristics).
%\bluetext{D says: This paragraph should be modified or %commented, because we indeed use this observation about %\hlike\ in
%the proof of $Q\Vdash\hfrak\leq\kappa$ in %Lemma~\ref{fact:wreqwr} and of Theorem~\ref{thm:main} to get %$\gfrak \leq \kappa$ after the
%product with $\xi^{{<}\xi}$ (more precisely, these follow by Cor.~\ref{cor:trivial}(c)).}

\begin{lemma}\label{lem:trivial}
Let $V_1\subseteq V_2$ be models (possibly classes)
of set theory (or a sufficient fragment),
$V_2$ transitive and $V_1$ is either transitive or an elementary submodel of $H^{V_2}(\chi)$ for some large enough regular $\chi$,
such that $V_1\cap \omega^\omega = V_2\cap \omega^\omega$.
\begin{enumerate}[(a)]
    \item If $\mathfrak x$ is \hlike, then
    $V_1\vDash \mathfrak{x}=\lambda$ implies
    $V_2\vDash \mathfrak{x}\le |\lambda|$.
\end{enumerate}
In addition, whenever $\kappa$ is uncountable regular in $V_1$
and $V_1^{{<}\kappa}\cap V_2\subseteq V_1$:
\begin{enumerate}[(a)]
\setcounter{enumi}{1}
    \item
    If $\mathfrak x$ is \mlike,
    then
    $V_1\vDash \mathfrak{x}\ge\kappa$ iff
    $V_2\vDash \mathfrak{x}\ge \kappa$.
    \item If $\mathfrak x$ is \mlike\  and
    $\lambda<\kappa$, then
    $V_1\vDash \mathfrak{x}=\lambda$ iff
    $V_2\vDash \mathfrak{x}=\lambda$.
    \item If $\mathfrak x$ is \tlike\  and
    $\lambda=\kappa$, then
    $V_1\vDash \mathfrak{x}=\lambda$ implies
    $V_2\vDash \mathfrak{x}=\lambda$.
\end{enumerate}
\end{lemma}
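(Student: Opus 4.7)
The plan is to reduce every clause to one of two basic observations: the defining formula ($\psi$ for \hlike/\plike, or $\varphi$ for \mlike) is absolute between the two universes, and the closure assumption $V_1^{<\kappa}\cap V_2\subseteq V_1$ forces ${<}\kappa$-sequences of elements of $V_1$ that appear in $V_2$ already to live in $V_1$. Part (a) uses only absoluteness. I would fix a family $\mathcal{A}\in V_1$ of cardinality $\lambda$ witnessing $\psi(\mathcal{A})$ in $V_1$; absoluteness of $\psi$ between universes with the same reals gives $\psi(\mathcal{A})$ in $V_2$, and any $V_1$-bijection between $\lambda$ and $\mathcal{A}$ still lives in $V_2$, yielding $\mathfrak{x}^{V_2}\le|\lambda|^{V_2}$. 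When $V_1$ is only elementary in $H^{V_2}(\chi)$, one first uses elementarity to extract the witness inside $V_1$ and then argues as before.

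For (b) and (c) the main tool is the key claim that $H({\le}\lambda)^{V_1}=H({\le}\lambda)^{V_2}$ for every $\lambda<\kappa$. The nontrivial inclusion proceeds via Mostowski coding: given $x\in H({\le}\lambda)^{V_2}$, take in $V_2$ a bijection $f\colon\alpha\to\mathrm{trcl}(\{x\})$ for some $\alpha\le\lambda$ and let $E\subseteq\alpha\times\alpha$ code the $\in$-relation along $f$. Then $E$ is a ${<}\kappa$-subset of the ordinals, hence $E\in V_1$ by the closure assumption, and the Mostowski collapse of $(\alpha,E)$ performed inside $V_1$ reproduces $\mathrm{trcl}(\{x\})$ by absoluteness of the collapse; so $x\in V_1$ with its cardinality bound preserved. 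The reverse inclusion is trivial, and the same coding argument also shows that no cardinal ${\le}\kappa$ is collapsed on the way from $V_1$ to $V_2$. With this equality in hand, the truth of $\varphi$ in $H({\le}\lambda)$ agrees in $V_1$ and $V_2$ for every $\lambda<\kappa$, and both (b) (the equivalence of $\mathfrak{x}\ge\kappa$) and (c) (matching values below $\kappa$) fall out immediately.

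For (d) the upper bound $\mathfrak{x}^{V_2}\le\kappa$ follows from (a) combined with the preservation of $\kappa$ as a cardinal. For the lower bound, I would argue by contradiction: suppose $A\in V_2$ is a set of reals with $|A|^{V_2}<\kappa$ satisfying $\psi(A)$. Since $V_1$ and $V_2$ share all reals, $A$ is a ${<}\kappa$-subset of $V_1\cap\omega^\omega$; any enumeration of $A$ in $V_2$ is then a ${<}\kappa$-sequence of elements of $V_1$, hence belongs to $V_1$, so $A\in V_1$. Absoluteness of $\psi$ yields $\psi(A)$ in $V_1$, contradicting $\mathfrak{x}^{V_1}=\kappa$. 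The main obstacle in all of this is really just verifying the Mostowski-coding argument of the second paragraph cleanly in both the transitive and the elementary-submodel setup; once it is in place, the rest is bookkeeping. It is also worth highlighting why (d) requires \plike rather than only \mlike: an \mlike witness may involve sets of size up to $\kappa$ that need not descend to $V_1$, whereas the reals in a \plike witness of size $<\kappa$ always do, which is exactly what makes the closure assumption applicable.
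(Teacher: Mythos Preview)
Your approach is correct and essentially the same as the paper's: both hinge on the absoluteness of $\psi$ (resp.\ $\varphi$) and on the equality $H({\le}\mu)^{V_1}=H({\le}\mu)^{V_2}$ for $\mu<\kappa$, and both handle the elementary-submodel case either by elementarity directly or by passing to the transitive collapse. The one difference worth noting is that the paper dispenses with your direct argument for the lower bound in~(d): since \plike\ implies both \hlike\ and \mlike, (d) drops out immediately from (a) (upper bound) together with (b) (lower bound), so no separate contradiction argument is needed.
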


\begin{proof}
%We check the four cases for $V_1$ and $V_2$.
%
First note that (d) follows by (a) and (b) because any \plike\ characteristic
is both \mlike\ and \hlike.

Assume $V_1$ is transitive.
For (a), if $\psi$ witnesses that $\mathfrak{x}$ is \hlike, $A\in V_1$ and $V_1$ satisfies $\psi(A)$, then the same holds in $V_2$. For (b) and (c), note that
$H^{V_1}(\le\mu)=H^{V_2}(\le\mu)$ for all $\mu<\kappa$
(easily shown by $\in$-induction).

The case  $V_1=N\preceq H^{V_2}(\chi)$ is similar. Note that $H^{V_2}(\chi)$ is a transitive subset of $V_2$, so (a) follows by the previous case.
%\bluetext{Martin says: The following may be a bit too detailed. We will shorten it later. Jakob says: no. Diego says: corrections on the notation is required, we are not assuming that $V_2$ is $<\kappa$-closed with respect to the actual universe, so every instance of $H(\mu)$ should be $H^{V_2}(\mu)$.}
For (b) and (c), work inside $V_2$. Note that $\kappa\subseteq N$ (by induction).
Whenever $\mu<\kappa$,
$\mu$ is regular iff $N\models$``$\mu\text{ regular}$'',
 and $H({\leq}\mu)\subseteq N$. %\bluetext{Even more: $H(\mu)\cup\{H(\mu)\}\subseteq N$.}
So $N\models$``$H({\leq}\mu)\models \phi$''
iff $H({\leq}\mu)\models \phi$.
%Therefore, $V_2$ and $H^{V_2}\chi$ contain the same reals. By applying the first case to $V_2,H_\chi$ and elementaricity of $V_1$, (b) follows.
%
%Now assume that $\mathfrak{x}$ is \hlike, witnessed by $\psi$, and %$V_1\models\mathfrak{x}=\lambda$. Since $V_1\models\exists A\subseteq\omega^\omega(|A|=\lambda\text{\ and }\psi(A))$, by elementarity there is some $A\in V_1$ such that $H_\chi\models$``$|A|=\lambda$ and $\psi(A)$''. Now, $V_2$ and $H_\chi$ contain the same reals, so $\psi$ is absolute for $V_2,H_\chi$. Hence $V_2\models\psi(A)$.

Alternatively, the case $V_1\preceq H^{V_2}(\chi)$ is a consequence of the first case. Work in $V_2$. Let $\pi:V_1\to\bar{V}_1$ be the transitive collapse of $V_1$. Note that $\pi(x)=x$ for any $x\in\omega^\omega\cap V_1$, so $\omega^\omega\cap\bar{V}_1=\omega^\omega\cap V_1=\omega^\omega$. To see (a), $V_1\vDash\xfrak=\lambda$ implies $\bar{V}_1\vDash \xfrak=\pi(\lambda)$, so $\xfrak\leq|\pi(\lambda)|\leq|\lambda|$ by the transitive case.

Now assume $V_1^{<\kappa}\subseteq V_1$ (still inside $V_2$), so we also have $\bar{V}_1^{<\kappa}\subseteq\bar{V}_1$. To see (b), $V_1\models\xfrak\geq\kappa$ iff $\bar{V}_1\models\xfrak\geq\pi(\kappa)=\kappa$, iff $V_2\models \xfrak\geq\kappa$ by the transitive case. Property (c) follows similarly by using $\pi(\lambda)=\lambda$ (when $\lambda<\kappa$).
\end{proof}

We apply this to three situations:
Boolean ultrapowers (which we will not apply in this paper),
extensions by distributive forcings,
and complete subforcings:

\begin{corollary}\label{cor:trivial}
Assume that $\kappa$ is uncountable regular, $P\Vdash \mathfrak{x}=\lambda$, and
\begin{enumerate}[(i)]
    \item \underline{either} $Q$ is a $P$-name for a ${<}\kappa$-distributive
    forcing, and we set $P^+:=P*Q$
    and $j(\lambda):=\lambda$;
\item \underline{or} %$\kappa\ne\lambda$,
$P$ is $\nu$-cc for some $\nu<\kappa$,
$j:V\to M$ is a complete embedding into a transitive ${<}\kappa$-closed model $M$, $\crit(j)\geq\kappa$, and we set $P^+:=j(P)$,
\item\underline{or} $P$ is $\kappa$-cc, $M\preceq H(\chi)$ is
${<}\kappa$-closed, and we  set $P^+:= P\cap M$ and $j(\lambda):=|\lambda\cap M|$.
(So $P^+$ is a complete subposet of $P$; and if $\lambda\le\kappa$
then $j(\lambda)=\lambda$.)
\end{enumerate}
Then we get:
\begin{enumerate}[(a)]
    \item
    If $\mathfrak x$ is \mlike\ and
    $\lambda\ge\kappa$, then
    $P^+\Vdash \mathfrak{x}\ge \kappa$.
    \item If $\mathfrak x$ is \mlike\  and
    $\lambda<\kappa$, then
    $P^+\Vdash \mathfrak{x}=\lambda$.
    \item  If $\mathfrak x$ is \hlike\ then $P^+\Vdash\mathfrak{x}\le |j(\lambda)|$. Concretely,
    \begin{itemize}
        \item[{}] for (i):
        $P^+\Vdash \mathfrak{x}\le |\lambda|$;
        \item[{}] for (ii):
        $P^+\Vdash \mathfrak{x}\le |j(\lambda)|$;
        \item[{}]
        for (iii):
        $P^+\Vdash \mathfrak{x}\le |\lambda\cap M|$.
    \end{itemize}
    \item So if $\mathfrak x$ is \tlike\ and $\lambda=\kappa$,
    then for (i) and (iii) we get $P^+\vDash \xfrak=\kappa$.
\end{enumerate}
\end{corollary}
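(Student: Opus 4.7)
The plan is to derive each clause by applying Lemma~\ref{lem:trivial} to a pair $V_1 \subseteq V_2$ tailored to the three scenarios; in each case $V_2$ will be the full $P^+$-extension, and $V_1$ will be a (transitive, or elementary-submodel) model in which $V_1\vDash \xfrak = j(\lambda)$ can be read off directly. The lemma's four clauses then map onto the corollary's four clauses once the hypotheses of ``same reals'' and (for the \mlike\ and \tlike\ statements) $V_1^{<\kappa}\cap V_2\subseteq V_1$ are verified, and the cardinality translation is tracked.

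For case (i), take $V_1 := V^P$ and $V_2 := V^{P*Q}$: the ${<}\kappa$-distributivity of $Q$ immediately gives both equality of reals and the closure condition, and $j(\lambda) = \lambda$ yields the four clauses verbatim from Lemma~\ref{lem:trivial}. For case (ii), let $G^+$ be $j(P)$-generic over $V$; by elementarity of $j$, $V_1 := M[G^+]$ satisfies $\xfrak = j(\lambda)$, and with $V_2 := V[G^+]$ one has $\omega^\omega\subseteq M$ (since $\crit(j)\geq\kappa\geq\aleph_1$) and the ${<}\kappa$-closure of $M$ combined with the $\nu$-ccness of $j(P)$ (inherited from $P$ because $\crit(j)>\nu$) places nice names for $<\kappa$-sequences inside $M$. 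Note that $\lambda\geq\kappa$ implies $j(\lambda)\geq j(\kappa)\geq\kappa$ and $\lambda<\kappa$ implies $j(\lambda)=\lambda$, so clauses (a)--(c) follow; clause (d) is explicitly not claimed here.

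For case (iii), set $V_2 := V[G^+]$ with $G^+$ being $P^+$-generic over $V$, and take $V_1 := \bar M[\bar G]$ where $\pi:M\to\bar M$ is the Mostowski collapse and $\bar G := \pi[G^+]$; since $\pi$ restricts to an isomorphism $P^+ = P\cap M\to \pi(P)$, elementarity gives $V_1\vDash \xfrak = \pi(\lambda)$, with $|\pi(\lambda)| = |\lambda\cap M| = |j(\lambda)|$, and with $\pi(\alpha)=\alpha$ for $\alpha\leq\kappa$ (using $\kappa\subseteq M$). The $\kappa$-ccness of $P$ (inherited by the complete subposet $P^+$) together with the ${<}\kappa$-closure of $M$ puts every nice $P^+$-name for a real, or for a ${<}\kappa$-sequence, inside $M$; after collapsing, such names lie in $\bar M$. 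This delivers equality of reals and the required closure, so Lemma~\ref{lem:trivial} yields all four clauses of the corollary.

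The principal obstacle is verifying, in cases (ii) and (iii), the two set-theoretic hypotheses of Lemma~\ref{lem:trivial}: equality of reals and ${<}\kappa$-closure of the inner model inside the outer one. Both reduce, via the chain condition on the forcing, to a nice-name argument showing that small antichains of $P^+$ (resp.\ $j(P)$) lie inside the ${<}\kappa$-closed model $M$, and hence so do the names they define.
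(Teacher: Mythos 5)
Your proposal is correct and takes essentially the same route as the paper: in all three cases you apply Lemma~\ref{lem:trivial} to an appropriate pair $V_1\subseteq V_2$, with $V_1:=V^P$ in case (i), $V_1:=M[G]$ in case (ii), and $V_1:=\bar M[\bar G^+]$ (the generic extension of the Mostowski collapse) in case (iii), and you correctly identify the nice-name/chain-condition argument as the mechanism that delivers ``same reals'' and ${<}\kappa$-closure. The one place you compress more than the paper does is the closure verification in case (iii): since elements of $V_1=\bar M[\bar G^+]$ are not just reals or ordinal sequences, one cannot literally invoke nice names; the paper instead shows that any $\bar P$-name $\tau$ for an element of $V_1$ is forced equal to a name $\sigma\in\bar M$ by first choosing, on each element $a$ of a maximal antichain $A$ (of size ${<}\kappa$, hence $A\in\bar M$), some $\sigma_a\in\bar M$ with $a\Vdash\tau=\sigma_a$, and then \emph{mixing} these names inside $\bar M$. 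Your phrase ``nice $P^+$-name for a real, or for a ${<}\kappa$-sequence, inside $M$'' captures the spirit but elides this mixing step; stating it would complete the argument.
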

%\redtext{at (iii) Diego says: this last case is not right. Assume $P$ is ccc, $\cof(\mu)=\kappa<\mu=\mu^{\aleph_0}$, $2^\mu<\lambda$ and $P\Vdash\pfrak=\cfrak=\lambda$. By the techniques in Section~\ref{sec:h} (to reduce $\hfrak$) we can find a complete subposet $P'$ of $P$ (of size $\mu)$ that forces $\kappa\leq\pfrak\leq\hfrak\leq\gfrak\leq\cof(\mu)=\nu<\cfrak=\mu<2^\mu<\lambda$. Though indeed $P^+$ forces $\hfrak<\lambda$, it is not thanks to the ``proof'' proposed here, since there is no way to have, in $V^{P'}$, a witness of $\hfrak$ of size $\lambda$ (larger than $2^\cfrak$).}
%\bluetext{Jakob says: another attempt, this time with transitive collapse. hope that this is clearer now.}

\begin{proof}
\textbf{Case (i).} Follows
directly from Lemma~\ref{lem:trivial}.

\textbf{Case (ii).}
Since
$M$ is ${<}\kappa$-closed
and $P$ is $\nu$-cc,
$P$ (or rather: the isomorphic image $j''P$) is a complete subforcing of $j(P)$.
Let $G$ be a $j(P)$-generic filter
over $V$. As $j(P)$ is in $M$ (and
$M$ is transitive), $G$
is generic over $M$ as well.
Then $V_1:=M[G]$  is ${<}\kappa$ closed in $V_2:=V[G]$.

First note that $V_1$ and $V_2$ have the same ${<}\kappa$-sequences of ordinals.
Let $\dot{\bar x}=(\dot x_i)_{i\in\mu}$ be a sequence
of $j(P)$-names for members of $M$ with $\mu<\kappa$.
Each $\dot x_i$ is determined by an
antichain, which has size ${<}\nu$
and therefore is in $M$,
so each $\dot x_i$  is in $M$. Hence
$\dot{\bar x}$ is in $M$.

By elementaricity, $P\Vdash \mathfrak{x}=\lambda$
implies $M\models$``$j(P)\Vdash \mathfrak{x}=j(\lambda)$''.
So $V_1\models \mathfrak{x}=j(\lambda)$,
and we can apply Lemma~\ref{lem:trivial}:
In the case that $\mathfrak{x}$ is \mlike,
if $\lambda\ge\kappa$, then
$j(\lambda)\ge j(\kappa)\geq \kappa$,
so $V_2\models \mathfrak{x}\ge \kappa$;
If $\lambda<\kappa$, then $j(\lambda)=\lambda$, so
$V_2\models \mathfrak{x}=\lambda$;
if $\mathfrak{x}$ is \hlike,
then $V_2\models \mathfrak{x}\le|j(\lambda)|$.
%So if additionally $\lambda<\kappa$,
%then  $V_2\models %\mathfrak{x}\le\lambda$, and moreover
%$M[G]\vDash \mathfrak{x}=\lambda$

\textbf{Case (iii).}
Let $\pi^0: M\to \bar M$ be the transitive collapse.
Set $\bar P:=\pi^0(P)\in \bar M$.
Note that $\pi^0(\kappa)=\kappa$ and that $\bar M$ is ${<}\kappa$-closed. Also, any condition in $P$ is $M$-generic since, for any antichain $A$ in $P$, $A\in M$ iff $A\subseteq M$ (by ${<}\kappa$-closedness).

Let $G^+$ be $P^+$-generic over $V$.
We can extend $G^+$ to a $P$-generic $G$ over $V$ (as $P^+$
is a complete subforcing of $P$), and we get
$G^+=G\cap P^+=G\cap M$.
Now work in $V[G]$.
Note that $M[G]$ is an elementary submodel of $H^{V[G]}(\chi)$
(and obviously not transitive),
and that the transitive collapse $\pi: M[G]\to V_1$
extends $\pi^0$ (as there are no new elements of $V$ in $M[G]$).
We claim that $V_1=\bar M[\bar G^+]$ where $\bar G^+:=\pi^0{''}G^+$ (which is $\bar P$-generic over $\bar M$, also $\bar G^+=\pi(G)$), and that
$\bar\tau[\bar G^+]=\pi(\tau[G])$ for any $P$-name $\tau\in M$, where $\bar\tau:=\pi^0(\tau)$.\footnote{This can be proved by induction on the rank of $\tau$, and uses that $M[G]\preceq H^{V[G]}(\chi)$.}
So in particular, $V_1$ is a subset of $V_2:=V[G^+]$
(the $P^+$-generic extension of $V$) because $\pi^0$ and $M$ (and therefore $\bar M$) are elements of $V$, so $G^+$ (and therefore $\bar G^+$) are elements of $V[G^+]$. In fact, $\bar G^+$ is $\bar P$-generic over $V$ because $\bar M$ is ${<}\kappa$-closed and $\bar P$ is $\kappa$-cc, moreover, $V_2=V[\bar{G}^+]$ (this is reflected by the fact that, in $V$, $\pi^0{\upharpoonright}P^+$ is an isomorphism between $P^+$ and $\bar{P}$).

We claim:
\begin{equation}\tag{$\ast$}\label{eq:jioew}
\text{$V_2$ is an NNR extension of $V_1$, moreover
$V_1$ is ${<}\kappa$-closed in $V_2$.}
\end{equation}
To show this, work in $V$. We argue with $\bar{P}$.
Let $\tau$ be a $\bar P$-name of an element of $V_1=\bar{M}[\bar G^+]$.
So we can find a maximal antichain $A$ in $\bar P$
and, for each $a\in A$, a $\bar P$-name $\sigma_{a}$ in $\bar M$
such that $a\Vdash_{\bar P} \tau=\sigma_a$.
Since $|A|<\kappa$ and $\bar P\subseteq \bar M$
and $\bar M$ is ${<}\kappa$-closed, $A$, as well as the function $a \mapsto \sigma_a$, are in $\bar M$.
Mixing the names $\sigma_a$ along $A$ to a name $\sigma\in \bar M$,
we get $\bar M \vDash a\Vdash_{\bar{P}} \sigma_a=\sigma$ for all $a\in A$, which implies $V\vDash a\Vdash_{\bar{P}} \sigma_a=\sigma$ because the forcing relation of atomic formulas is absolute.
% This corresponds to a $\bar P^+$-name $\sigma$ in $\bar M$
% such that
So  $\bar{P}\Vdash\tau=\sigma$.

Now fix a $\bar P$ name $\vec\tau=(\tau_\alpha)_{\alpha<\mu}$
of a sequence of elements of $V_1$, with $\mu<\kappa$.
Again we use closure of $\bar M$ and get a
sequence $(\sigma_\alpha)_{\alpha<\mu}$ in $\bar M$
such that $\bar P$ forces that $\tau_\alpha=\sigma_\alpha[\bar G^+]$, and so
the evaluation of  the sequence $\vec \tau$ is in $\bar M[\bar G^+]=V_1$.
This proves~(\ref{eq:jioew}).

Now assume that $\xfrak$ is either \hlike\ or \mlike, and $P\Vdash \mathfrak x= \lambda$.
By elementaricity, this holds in $M$, so $\bar M\vDash \bar P\Vdash \xfrak=\pi^0(\lambda)$. Now let $\bar G^+$ be $\bar P$-generic over $V$, $V_1:=\bar M[\bar G^+]$ and $V_2:= V[\bar G^+]$, so $V_1\models\xfrak=\pi^0(\lambda)$. If $\xfrak$ is \hlike\ then, by Lemma~\ref{lem:trivial}(a), $V_2\models\xfrak\leq|\pi^0(\lambda)|=|\lambda\cap M|$; if $\xfrak$ is \mlike\ and $\lambda<\kappa$, then $V_1\models\xfrak=\lambda$ and so the same is satisfied in $V_2$ by Lemma~\ref{lem:trivial}(c); otherwise, if $\lambda\geq\kappa$ then $V_1\models\xfrak=\pi^0(\lambda)\geq\pi^0(\kappa)=\kappa$, so $V_2\models \xfrak\geq\kappa$ by Lemma~\ref{lem:trivial}(b).
%i.e., there is in $M$
%a sequence $X=(x_\alpha)_{\alpha\in\lambda}$ in $M$
%of sets of reals such that $P$ forces
%that each $x_\alpha$ is a set of reals and that
%$\phi(X)$ holds.
%So $\bar M$ thinks that
%$\bar X:=\pi(X)$ is a sequence of length $\bar %\lambda:=\pi(\lambda)$
%of $\bar P^+:=\pi(P)$-names such that $\bar P^+$ forces $\psi(\bar X)$.
%As $V_2=V[G^+]$  is an NNR-extension of $V_1$,
%$\psi(\bar X)$ holds in $V_2$ as well. So we have a sequence of sets
%of reals of length $\bar \lambda=|\lambda \cap M|$ satisfying $\psi$,
%witnessing $\mathfrak x\le\lambda$.

In any of the cases above, (d) is a direct consequence of (a) and (c).
\end{proof}

\section{Dealing with \texorpdfstring{$\mfrak$}{m}}\label{sec:ma}

We show how to deal with $\mathfrak m$.
It is easy to check that the Cicho\'n's Maximum construction from \cite{GKS}
forces $\mathfrak m=\aleph_1$,
and can easily be
modified to force $\mathfrak m=\addN$
(by forcing with all
small ccc forcings during the iteration).
With a bit more work
it is also possible to get $\aleph_1<\mfrak <\addN$.

Let us start by recalling the definitions of some well-known classes of ccc forcings:
\begin{definition}\label{DefKnaster}
    Let $\lambda$ be an infinite cardinal, $k\ge 2$ and let $Q$ be a poset.
    \begin{enumerate}
     \item $Q$ is \emph{$(\lambda, k)$-Knaster} if, for every
    $A\in [Q]^{\lambda}$, there is a $B\in [A]^\lambda$ which is
    $k$-linked (i.e., every $c\in [B]^k$ has a lower bound in $Q$).
    We write \emph{$k$-Knaster} for ($\aleph_1,k)$-Knaster;
    \emph{Knaster} means $2$-Knaster; \emph{$(\lambda,1)$-Knaster} denotes $\lambda$-cc
    and \emph{$1$-Knaster} denotes ccc.\footnote{This is just an abuse of notation that turns out to be convenient for stating our results.}

    \item \emph{$Q$ has precaliber $\lambda$} if, for every
    $A\in [Q]^{\lambda}$, there is a $B\in [A]^{\lambda}$ which is
    centered,
    i.e., every  finite subset of  $B$ has a lower bound in $Q$. We sometimes shorten ``precaliber $\aleph_1$'' to ``precaliber''.

    \item\label{sigklink} $Q$ is \emph{$(\sigma, k)$-linked} if there is a function $\pi:Q\to \omega$
    such that $\pi^{-1}(\{n\})$ is $k$-linked for each $n$.

    \item $Q$ is \emph{$\sigma$-centered} if there is a function $\pi:Q\to \omega$
    such that each $\pi^{-1}(\{n\})$  is centered.
    \end{enumerate}
\end{definition}

\begin{figure}
\[
\xymatrix@=4.5ex{
                & \textnormal{$\sigma$-$2$-linked}\ar[d] & \textnormal{$\sigma$-$3$-linked}\ar[l]\ar[d] & \cdots\ar[l]\ar[d] & \sigma\textnormal{-centered}\ar[d]\ar[l]\\
\textnormal{ccc} & 2\textnormal{-Knaster}\ar[l] & 3\textnormal{-Knaster}\ar[l] & \cdots\ar[l] & \textnormal{precaliber}\ar[l]
}
\]
\caption{Some classes of ccc forcings\label{fig:cccetal}}
\end{figure}
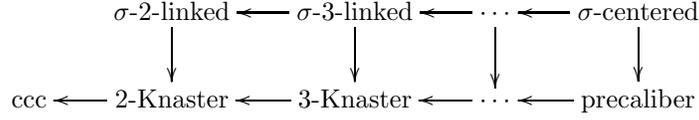

The implications between these notions (for $\lambda=\aleph_1$)
are listed in Figure~\ref{fig:cccetal}.
To each class $C$ of forcing notions, we can define the Martin's Axiom number $\mfrak(C)$ in the usual way (recall Definition \ref{DefChar1}). An implication $C_1\leftarrow C_2$
in the diagram corresponds to a ZFC inequality $\mfrak(C_1)\le \mfrak(C_2)$.
Recall that $\mathfrak m(\sigma\textrm{-centered})=\mathfrak p = \mathfrak t$. Also recall that, in the old constructions,
all iterands were $(\sigma,k)$-linked for all $k$.

\begin{lemma}\label{lem:suslin}
\begin{enumerate}
  \item\label{item:lekwjt0}
    If there is a Suslin tree, then  $\mfrak=\aleph_1$.
  \item\label{item:lekwjt1}
    After adding a Cohen real $c$ over $V$, in $V[c]$ there is a
    Suslin tree.
  \item\label{item:lekwjt2}
    Any Knaster poset preserves Suslin trees.
  \item\label{item:lekwjt3}
  The result of any finite support iteration of  $(\lambda,k)$-Knaster posets
   ($\lambda$ uncountable regular and $k\ge 1$) is again $(\lambda,k)$-Knaster.
   \item\label{item:lekwjt4}
  In particular, when $k\geq 1$,
  if $P$ is a f.s.\ iteration of forcings such that all iterands
  are either $(\sigma, k)$-linked or smaller than $\lambda$,
  then $P$ is $(\lambda,k)$-Knaster.
    \item\label{item:martinsaysknown}
    Let $C$ be any of the forcing classes
    of Figure~\ref{fig:cccetal},
    and assume $\mfrak(C)=\lambda>\aleph_1$.
    \\
    (Or just assume that $C$ is a class of ccc forcings
    closed under $Q\mapsto Q^{<\omega}$, the
    finite support product of countably many copies of $Q$,
    and under $(Q,p)\mapsto \{q: q\le p\}$ for $p\in Q$.)

    If $Q\in C$,
    then every subset $A$ of $Q$ of size ${<}\lambda$
    is ``$\sigma$-centered in $Q$'' (i.e., there is a
    function $\pi:A\to\omega$ such that every finite $\pi$-homogeneous
    subset of $A$ has a common lower bound in $Q$).

    So in particular, for all $\mu<\lambda$ of uncountable cofinality,
    $Q$ has precaliber $\mu$
    and is $(\mu,\ell)$-Knaster for all $\ell\ge 2$.

    \item\label{item:corcor} $\mfrak>\aleph_1$ implies $\mfrak=\mfrak(\textnormal{precaliber})$.
    \\
    $\mfrak(k\textnormal{-Knaster})>\aleph_1$ implies
    $\mfrak(k\textnormal{-Knaster})=\mfrak(\textnormal{precaliber})$.
\end{enumerate}
\end{lemma}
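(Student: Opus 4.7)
The only substantial step is \eqref{item:martinsaysknown}; the rest are either standard or formal consequences. I would handle them in order.

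For \eqref{item:lekwjt0}, viewing a Suslin tree $T$ as a forcing under end-extension, $T$ is ccc, and the set $D_\alpha$ of nodes of height $\geq\alpha$ is dense for each $\alpha<\omega_1$; any filter meeting all of them would give a cofinal branch, whose level-by-level siblings form an uncountable antichain in $T$, contradicting Suslinness. So $\mfrak\leq\aleph_1$. For \eqref{item:lekwjt1} I would simply invoke Shelah's classical theorem that a Cohen real codes a Suslin tree. For \eqref{item:lekwjt2}: suppose $P$ is Knaster and $(\dot a_\alpha:\alpha<\omega_1)$ is a $P$-name for an antichain in $T$; pick $p_\alpha$ deciding $\dot a_\alpha=a_\alpha\in T$, and apply the Knaster property to extract an uncountable $B\subseteq\omega_1$ making the $p_\alpha$'s pairwise compatible. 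A common extension of $p_\alpha,p_\beta$ forces $a_\alpha\perp a_\beta$ in $T$, so $\{a_\alpha:\alpha\in B\}$ is an uncountable antichain of $T$ in $V$, contradiction.

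Part \eqref{item:lekwjt3} is the usual $\Delta$-system argument for finite support iterations: given $\lambda$-many conditions, thin their supports to a $\Delta$-system of size $\lambda$ with root $r$, and then apply $(\lambda,k)$-Knasterness of the iterands successively on the finitely many coordinates of $r$. Item \eqref{item:lekwjt4} then reduces to two trivial observations: any poset of size ${<}\lambda$ is automatically $(\lambda,k)$-Knaster (one condition is repeated $\lambda$ times in any $\lambda$-sized family, hence trivially $k$-linked), and any $(\sigma,k)$-linked poset is $(\lambda,k)$-Knaster by pigeonholing a $\lambda$-sized family into the countably many $k$-linked pieces.

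The main substantive step is \eqref{item:martinsaysknown}, a classical Barnett--Todor\v{c}evi\'{c} argument. Given $Q\in C$ and $A\subseteq Q$ with $|A|=\mu<\lambda$, I would consider the poset $R$ whose conditions are finite partial functions $\pi:A\to\omega$ such that, for each $n$, $\pi^{-1}\{n\}$ has a common lower bound in $Q$, ordered by reverse inclusion. Then the sets $D_a:=\{\pi\in R : a\in\dom\pi\}$ for $a\in A$ are dense, giving $\mu<\lambda$ many dense sets, and a filter meeting them all partitions $A$ into countably many $Q$-centered pieces. The hard part, and the main obstacle, is verifying $R\in C$: one exhibits $R$ as a suitable natural subforcing of a product construction built from $Q$, using the assumed closure of $C$ under $Q\mapsto Q^{<\omega}$ and localization below a condition. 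The final sentence of \eqref{item:martinsaysknown} follows by pigeonhole: partitioning a set of size $\mu$ with $\cf(\mu)>\omega$ into countably many pieces leaves one of full size $\mu$, which is centered in $Q$ and hence $\ell$-linked for every $\ell$.

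Finally, \eqref{item:corcor} is formal. The inequality $\mfrak(k\textnormal{-Knaster})\leq\mfrak(\textnormal{precaliber})$ holds trivially since precaliber is a subclass of $k$-Knaster. Conversely, assuming $\mfrak(k\textnormal{-Knaster})=\lambda>\aleph_1$, \eqref{item:martinsaysknown} applied with $\mu=\aleph_1$ shows that every $k$-Knaster poset has precaliber $\aleph_1$; thus MA for precaliber implies MA for $k$-Knaster, giving $\mfrak(\textnormal{precaliber})\leq\mfrak(k\textnormal{-Knaster})$. The case of $\mfrak=\mfrak(\textnormal{ccc})$ is identical, with ``ccc'' in place of ``$k$-Knaster''.
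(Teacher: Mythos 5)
Items (\ref{item:lekwjt0})--(\ref{item:lekwjt4}) and (\ref{item:corcor}) of your write-up are fine; for (\ref{item:lekwjt2}) you use a direct name-chasing argument where the paper instead notes that a Knaster poset $P$ composed with a Suslin tree $T$ gives a ccc product $P\times T$, but both are standard and correct.

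The substantive problem is in your treatment of (\ref{item:martinsaysknown}), and it is a genuine gap, not just a stylistic difference. You introduce the auxiliary ``coloring poset'' $R$ of finite partial functions $\pi\colon A\to\omega$ with $Q$-compatible fibers, apply MA to $R$, and then acknowledge that ``the hard part, and the main obstacle, is verifying $R\in C$,'' which you dispose of by asserting that ``one exhibits $R$ as a suitable natural subforcing of a product construction.'' This is exactly the step that is missing, and it is not at all clear how to carry it out: $R$ is not a subforcing of $Q^{<\omega}$ in any obvious sense, and showing directly that $R$ is ccc (let alone that it lies in $C$) is nontrivial. The paper's proof avoids the poset $R$ entirely. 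It applies MA$_{<\lambda}(C)$ to $Q^{<\omega}$ itself, which is in $C$ by the assumed closure (for $C=\textnormal{ccc}$, one first observes that $\textnormal{MA}_{\aleph_1}(\textnormal{ccc})$ implies ccc $=$ Knaster, hence productive ccc). The dense sets are $D_q=\{p\in Q^{<\omega}: q\in\text{range}(p)\}$ for $q\in A$; a filter $G$ meeting them all gives the coloring $c(q)=\min\{n: q\in G(n)\}$ directly, and $c$-homogeneous finite sets live in a single centered slice $G(n)$. So the ``obstacle'' you flag never arises: the filter on the product \emph{is} the coloring. You should replace the detour through $R$ by the direct argument on $Q^{<\omega}$.
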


\begin{proof}
(\ref{item:lekwjt0}): Clear.
(\ref{item:lekwjt1}): See~\cite{Sh:176, To93} or Velleman~\cite{MR763903}.
(\ref{item:lekwjt2}): Recall that the product of a Knaster poset with a ccc poset is still ccc. Hence, if $P$ is Knaster and $T$ is a Suslin tree, then $P\times T=P\ast\check{T}$ is ccc, i.e., $T$ remains Suslin in the $P$-extension.

(\ref{item:lekwjt3}): Well-known, see e.g.\ Kunen~\cite[Lemma V.4.10]{Kunen} for $(\aleph_1,2)$-Knaster.
The proof for the general case is the same, see e.g.~\cite[Section 5]{mejiavert}.

(\ref{item:lekwjt4}): Clear, as $(\sigma,k)$-linked implies
$(\mu,k)$-Knaster (for all uncountable regular $\mu$),
and since every forcing of size ${<}\mu$ is $(\mu,k)$-Knaster (for any $k$).

(\ref{item:martinsaysknown}):
%    is ccc. This indicates that $\mfrak>\aleph_1$ implies that the class of ccc posets is closed under $Q\mapsto Q^{<\omega}$.
%
%    Let $C$ be a class as in (a), $Q\in C$ and let $A\in[Q]^{<\lambda}$.
First note that it is well known\footnote{See, e.g., Jech~\cite[Thm.~16.21]{Je03} (and the historical remarks, where the result is attributed to (independently) Kunen, Rowbottom and Solovay), or~\cite[Lem.~1.4.14]{BaJu} or Galvin~\cite[Pg. 34]{Galvin}.}
that $\text{MA}_{\aleph_1}$(ccc) implies that every ccc forcing is Knaster, and hence that the class $C$ of ccc forcings is closed under $Q\mapsto Q^{<\omega}$.
(For the other classes $C$ in Figure~\ref{fig:cccetal}, the closure
is immediate.)

So let $C$ be a closed class, $\mfrak(C)=\lambda>\aleph_1$,  $Q\in C$ and $A\in [Q]^{<\lambda}$.
Given a filter $G$ in $Q^{<\omega}$ and $q\in Q$,
    set $c(q)=n$ iff $n$ is minimal such that there is a $\bar p\in G$
    with $p(n)=q$. Note that for all $q$, the set
    \[D_q=\{p\in Q^{<\omega}:\, (\exists n\in\omega)\, q=p(n)\}\]
    is dense, and that $c(q)$ is defined whenever $G$ intersects $D_q$. Pick a filter $G$ meeting all
    $D_q$ for $q\in A$. This defines $c:A\to \omega$ such that
    $c(a_0)=c(a_1)=\dots = c(a_{\ell-1})=n$ implies that
    all $a_i$ appear in $G(n)$ and thus they are compatible in $Q$. Hence, $A$ is the union of countably many centered (in $Q$) subsets of $Q$.

(\ref{item:corcor}): Follows as a corollary.
\end{proof}

This shows that
it is not possible to simultaneously separate
more than two Knaster numbers.
%$\mfrak(k\textnormal{-Knaster})$
More specifically:
ZFC proves that there is a (unique)
$1\le k^*\le\omega$ and, if $k^*<\omega$, a (unique) $\lambda>\aleph_1$,
such that for all $1\le\ell<\omega$
\begin{equation}\label{eq:knisterknaster}
\mfrak(\ell\textnormal{-Knaster})=
\begin{cases}
\aleph_1&\text{if }\ell<k^* \\
\lambda &\text{otherwise}.
\end{cases}
\end{equation}
(Recall that $\mfrak(1\textnormal{-Knaster}) = \mfrak(\textnormal{ccc})$ by our definition.)
So the case $k^*=\omega$ means that all Knaster numbers are $\aleph_1$.

In this section, we will show how these constellations
can be realized together with the previous values for the Cicho\'n-characteristics.

In the case
$k^*<\omega$, we
know that $\mfrak(\textnormal{precaliber})=\lambda$ as well. We briefly comment that
$\mfrak(\textnormal{precaliber})=\aleph_1$
(in connection with the Cicho\'n-values) is possible too.
In the next section, we will deal with the
remaining case:
$k^*=\omega$,
i.e., all Knaster numbers are $\aleph_1$, while
$\mfrak(\textnormal{precaliber})>\aleph_1$.

The central observation is the following, see~\cite{TodorCell,To93} and~\cite[Sect.~3]{Barnett}.

\begin{lemma}\label{lem:blamain}
Let $k\in\omega$, $k\ge 2$ and $\lambda$ be
uncountable regular.
Let $C$ be the finite support iteration of
$\lambda$ many copies of Cohen forcing.
Assume that $C$ forces that $P$ is $(\lambda,k+1)$-Knaster.
Then $C*P$ forces $\mfrak(k\textnormal{-Knaster})\le \lambda$.

The same holds for $k=1$ and $\lambda=\aleph_1$.
\end{lemma}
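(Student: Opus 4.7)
The strategy is to exhibit, in the $C\ast P$-extension, a $k$-Knaster poset $Q$ of size $\lambda$ together with a family of $\lambda$ dense subsets of $Q$ that is met by no filter. Following the classical constructions of Todorcevi\'c \cite{TodorCell,To93} and Barnett \cite{Barnett}, I will define $Q$ already in $V[C]$ from the Cohen generic, and argue that the $(\lambda,k+1)$-Knaster hypothesis on $P$ is exactly what is needed for the $k$-Knasterness of $Q$ to persist in $V[C\ast P]$.

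The first step is to use the $\lambda$ Cohen reals added by $C$ to build a Cohen-generic combinatorial object on $\lambda$, for instance a generic coloring $c$ on finite tuples from $\lambda$. From $c$ one defines $Q\in V[C]$ as the poset of finite subsets of $\lambda$ whose combinatorial type (with respect to $c$) is ``admissible'' in a suitable sense, ordered by reverse inclusion. Standard Cohen-density arguments show that each $D_\alpha:=\{s\in Q:\max s\geq \alpha\}$ (for $\alpha<\lambda$) is dense in $Q$, and the same genericity prevents any $c$-admissible subset of $\lambda$ of order type $\lambda$ from existing in $V[C\ast P]$ (using that $P$ adds no real that could ``fake'' such a cofinal set and that $\lambda$ remains regular). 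Hence no filter in $Q$ in the $C\ast P$-extension can meet all the $D_\alpha$, since such a filter would yield an unbounded admissible set.

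The heart of the argument is showing that $Q$ is $k$-Knaster in $V[C\ast P]$. Let $A=\{s_\xi : \xi<\lambda\}\subseteq Q$ be given in the extension; since each $s_\xi$ is a finite subset of $\lambda$ it is already an element of $V[C]$. Pick $P$-conditions $p_\xi\in P$ forcing $s_\xi\in\dot A$ and work in $V[C]$. A $\Delta$-system lemma applied to $(s_\xi)_\xi$, together with a bookkeeping thinning, produces a uniform structure: a common root $r$, pairwise disjoint legs of fixed size, and a fixed combinatorial type with respect to $c$. Now apply the $(\lambda,k+1)$-Knaster property of $P$ in $V[C]$ to extract $\lambda$ many $\xi$'s whose $p_\xi$ are $(k+1)$-linked in $P$. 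Finally, combine this with the Cohen-genericity of $c$ to conclude that any $k$ of the remaining $s_\xi$'s have union in $Q$, yielding $k$-linkage inside $Q$ in the $C\ast P$-extension.

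The main obstacle is the calibration in the last step: using the $(k+1)$-linkage from $P$ to absorb, via a pigeonhole-type argument, the new combinatorial constraints that arise when one takes the union of $k$ legs of the $\Delta$-system. The extra ``$+1$'' on $P$'s side matches the fact that any ``new'' obstruction to lying in $Q$ spans at most $k+1$ elements distributed among $k$ legs, which by pigeonhole forces two of those elements into a single leg, where $c$-admissibility is already guaranteed by $s_\xi\in Q$. This delicate counting is precisely the one carried out in \cite{TodorCell,To93,Barnett}. For the boundary case $k=1$, $\lambda=\aleph_1$, the lemma specializes to the classical Todorcevi\'c theorem that adding $\aleph_1$ Cohen reals over a model of MA destroys MA via an explicit ccc forcing.
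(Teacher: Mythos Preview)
Your plan has the role of the $(\lambda,k{+}1)$-Knaster hypothesis exactly backwards, and this is a genuine gap rather than a matter of taste.

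In the paper's argument (and in the Todor\v{c}evi\'c--Barnett construction you cite), the witness poset $R$ is defined in $V[C]$ from the Cohen reals $\eta_\alpha\in(k{+}1)^\omega$: conditions are finite partial colorings $p\colon u\to\omega$ such that no $p$-monochromatic subset of $u$ witnesses the pattern $(\ast\ast)$, namely $\nu^\frown i\vartriangleleft\eta_{\alpha_i}$ for some $\nu$ and $k{+}1$ many $\alpha_i$. The $k$-Knasterness of $R$ is then a \emph{direct, absolute} combinatorial fact: given $\omega_1$ many conditions, thin to a $\Delta$-system of the same type, and the pigeonhole ``any $k{+}1$ points among $k$ legs repeat a leg'' shows that any $k$ conditions amalgamate. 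Nothing about $P$ enters here; the argument works verbatim in $V[C*P]$. Where the $(\lambda,k{+}1)$-Knaster property of $P$ is actually used is in showing that \emph{in $V[C*P]$} every $X\in[\lambda]^\lambda$ realizes the pattern $(\ast\ast)$; this is what rules out a filter meeting all $D_\alpha$, since such a filter would yield a $\lambda$-sized monochromatic set avoiding $(\ast\ast)$.

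Your sketch swaps these: you appeal to ``Cohen genericity'' and ``$P$ adds no real'' for the no-filter clause, and reserve the Knaster hypothesis for the $k$-Knasterness of $Q$. The first move fails: nothing in the hypotheses says $P$ adds no reals, and even if it did, $P$ can certainly add new $\lambda$-sized subsets of $\lambda$; genericity over $V$ says nothing about those. The second move is also off-target: $k$-Knaster concerns $\aleph_1$-sized families, yet you take $A$ of size $\lambda$ and invoke $(\lambda,k{+}1)$-Knaster, which does not speak to $\aleph_1$-sized $A$ at all. Finally, for $k=1$, $\lambda=\aleph_1$ the paper does not run the coloring argument but simply observes that the first Cohen real adds a Suslin tree, preserved by any subsequent Knaster forcing; your ``MA-destroying'' formulation is a different (and less direct) story.
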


For $k=1$ this trivially follows from
Lemma~\ref{lem:suslin}:
The first Cohen forcing adds a
Suslin tree, which is preserved by the rest of the Cohen posets composed
with $P$. So we get $\mfrak=\aleph_1$.
The proof for $k>1$ is done in the following two lemmas.

\begin{remark}
   Adding the Cohen reals first is just for notational convenience.
   The same holds, e.g., in a f.s.\ iteration where we
   add Cohen reals on a subset of the index set of order type $\lambda$;
   and we assume that the (limit of the) whole iteration is
   $(\lambda,k+1)$-Knaster.
\end{remark}

\begin{lemma}\label{lem:help1}
Under the assumption of Lemma~\ref{lem:blamain}, for $k\ge 1$:
We interpret each Cohen real
$\eta_\alpha$ ($\alpha\in\lambda$) as an element of
$(k+1)^\omega$. $C*P$ forces: For all $X\in [\lambda]^\lambda$,
\begin{equation}\label{eq:uheqt}\tag{$**$}
    (\exists \nu\in (k+1)^{<\omega})\,
    (\exists \alpha_0,\dots,\alpha_k\in X)\, (\forall 0\le i\le k) \nu^\frown i \vartriangleleft \eta_{\alpha_i}
\end{equation}
\end{lemma}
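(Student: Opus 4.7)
The plan is to show that the set of conditions in $C*P$ forcing~\eqref{eq:uheqt} for any given name $\dot X$ is dense below every condition forcing $|\dot X|=\lambda$. The argument combines the assumed $(\lambda,k+1)$-Knaster property of $P$ in $V^C$ with a $\Delta$-system plus pigeonhole extraction that exploits Cohen genericity: this lets me place $k+1$ of the $\eta_\alpha$'s into the $k+1$ distinct immediate successors of a common $\nu\in(k+1)^{<\omega}$.

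Working in $V^C$, I first reduce to a purely combinatorial statement. Given a $P$-name $\dot X$ forced to have size $\lambda$, for each $\alpha<\lambda$ pick a condition $q_\alpha\in P$ (refining any prescribed $q_0$) and a distinct witness $\gamma_\alpha$ with $q_\alpha\Vdash \gamma_\alpha\in \dot X$. Apply $(\lambda,k+1)$-Knasterness of $P$ to obtain $A\in[\lambda]^\lambda$ with $\{q_\alpha:\alpha\in A\}$ being $(k+1)$-linked; then $B:=\{\gamma_\alpha:\alpha\in A\}$ lies in $V^C$ and has size $\lambda$. Any $k+1$ of the $q_\alpha$'s for $\alpha\in A$ share a common lower bound in $P$, so it suffices to find (in $V^C$) some $\nu$ and distinct $\alpha_0,\ldots,\alpha_k\in A$ with $\nu^\frown i\vartriangleleft \eta_{\gamma_{\alpha_i}}$: the common lower bound of $q_{\alpha_0},\ldots,q_{\alpha_k}$ then forces~\eqref{eq:uheqt} in the $C*P$-extension.

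The combinatorial core is thus: in $V^C$, for every uncountable $B\subseteq\lambda$ there exist $\nu\in(k+1)^{<\omega}$ and $\gamma_0,\ldots,\gamma_k\in B$ with $\nu^\frown i\vartriangleleft \eta_{\gamma_i}$. I prove this by density on $C$. Given $c_0\in C$ forcing $\dot B$ uncountable, pick $c_\xi\le c_0$ and distinct $\gamma_\xi$ for $\xi<\omega_1$ with $c_\xi\Vdash \gamma_\xi\in \dot B$. After first extending each $c_\xi$ so that $\gamma_\xi$ lies in its support and $c_\xi$ specifies $\eta_{\gamma_\xi}$ on some initial segment $\{0,\ldots,m_\xi-1\}$ of $\omega$, apply the $\Delta$-system lemma to the supports together with pigeonhole on the countably many possible root-restrictions and fiber-data to obtain $S\in[\omega_1]^{\omega_1}$ on which the supports form a $\Delta$-system with root $r$, the $c_\xi$ all agree on $r$, $\gamma_\xi\notin r$ for $\xi\in S$, and $c_\xi$ specifies $\eta_{\gamma_\xi}\upharpoonright m = w$ for a fixed $w\in(k+1)^m$.

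Now for any $\xi_0,\ldots,\xi_k\in S$, the conditions $c_{\xi_0},\ldots,c_{\xi_k}$ are pairwise compatible (agreeing on $r$ and with otherwise disjoint supports); their union, further extended by the new values $(\gamma_{\xi_i},m)\mapsto i$, forces $w^\frown i\vartriangleleft \eta_{\gamma_{\xi_i}}$ while still forcing each $\gamma_{\xi_i}\in \dot B$. Taking $\nu:=w$ completes the density step and the lemma. The only real obstacle is preparatory bookkeeping — each $c_\xi$ must first be extended so that $\gamma_\xi$ is in its support and the fiber there is a total function on an initial segment of $\omega$, so that the $\Delta$-system extraction and the pigeonhole on fiber-data align; this is routine since only countably many finite partial functions from $\omega$ to $k+1$ occur. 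No property of Cohen forcing beyond per-coordinate genericity is used.
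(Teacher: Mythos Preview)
Your proof is correct and follows essentially the same strategy as the paper's: apply the $(\lambda,k{+}1)$-Knaster property of $P$ in $V^C$ to obtain a large $(k{+}1)$-linked family, then run a $\Delta$-system plus pigeonhole argument on the Cohen supports so that the common stem $\nu$ can be extended in $k{+}1$ different directions. The only difference is packaging: you isolate the Cohen-generic step as a separate combinatorial statement (valid for any uncountable $B$, using just $\omega_1$ many conditions), whereas the paper runs the $\Delta$-system directly on a $\lambda$-sized family of $C*P$-conditions in one pass.
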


\begin{proof}
Let $p^*\in C\ast P$ force that $X\in[\lambda]^\lambda$. By our assumption, first note that $p^*{\upharpoonright}\lambda$ forces that there is some $X'\in[\lambda]^\lambda$ and a $k+1$-linked set $\{ r_\alpha:\alpha\in X'\}$ of conditions in $P$ below $p^*(\lambda)$ such that $r_\alpha\Vdash_P\alpha\in X$ for any $\alpha\in X'$.

Since $X'$ is a $C$-name, there is some $Y\in[\lambda]^{\lambda}$
and, for each $\alpha\in Y$, some $p_\alpha\le p^*{\upharpoonright}\lambda$ in $C$
forcing $\alpha\in X'$.
We can assume that $\alpha\in\dom(p_\alpha)$ and, by thinning out $Y$,
that
$\dom(p_\alpha)$ forms a $\Delta$-system with heart $a$
below each $\alpha\in Y$, $\la p_\alpha{\upharpoonright}a:\alpha\in Y\ra$ is constant, and that
$p_\alpha(\alpha)$ is always the same Cohen condition $\nu\in (k+1)^{<\omega}$.

For each $\alpha\in Y$ let $q_\alpha\in C\ast P$ such that $q_\alpha{\upharpoonright}\lambda=p_\alpha$ and $q_\alpha(\lambda)=r_\alpha$. It is clear that
$\la q_\alpha : \alpha\in Y\ra$ is $k+1$-linked and that $q_\alpha\Vdash\alpha\in X$.
%So there is an $Y\in[X^+]^{\lambda}$
%such that $\{p_\alpha:\, \alpha\in Y\}$ is $k+1$-directed.
Pick $\alpha_0,\dots,\alpha_k\in Y$ and
$q\le q_{\alpha_0},\dots, q_{\alpha_k}$.
We can assume that $q{\upharpoonright}\lambda$ is just the union
of the $q_{\alpha_i}{\upharpoonright}\lambda$.
In particular, we can extend
$q(\alpha_i)=\nu$ to $\nu^\frown i$, satisfying~\eqref{eq:uheqt} after all. This proves the claim.
\end{proof}

\begin{lemma}\label{lem:blauwz}
Under the assumption of Lemma~\ref{lem:blamain}, for $k\ge 2$: In $V^C$
define $R_{K,k}$ to be the set of finite partial
functions $p:u\to \omega$, $u\subseteq \lambda$ finite, such
that~\eqref{eq:uheqt} fails for all $p$-homogeneous $X\subseteq u$.\footnote{Say that $X\subseteq u$ is \emph{$p$-homogeneous} if $p{\upharpoonright}X$ is a constant function.}
Then $P$ forces the following:
\begin{enumerate}[(a)]
    \item There is no filter on $R_{K,k}$ meeting all
    dense $D_\alpha$ ($\alpha\in\lambda$), where we set
    $D_\alpha=\{p: \alpha\in \dom(u)\}$.
    \item $R_{K,k}$ is $k$-Knaster.
\end{enumerate}
\end{lemma}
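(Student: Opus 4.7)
Suppose toward contradiction that a filter $G\subseteq R_{K,k}$ meets every $D_\alpha$ ($\alpha<\lambda$). Because $G$ is directed and its elements are pairwise-compatible finite partial functions, $f:=\bigcup G$ is a well-defined function, and the meeting of each $D_\alpha$ forces $\dom(f)=\lambda$. Since $\lambda$ is regular uncountable, pigeonhole yields a color $n<\omega$ and $X\in[\lambda]^\lambda$ with $f{\upharpoonright}X\equiv n$. Apply Lemma~\ref{lem:help1} to $X$: one obtains $\nu\in(k+1)^{<\omega}$ and $\alpha_0,\ldots,\alpha_k\in X$ with $\nu^\frown i\vartriangleleft\eta_{\alpha_i}$ for $0\le i\le k$. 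Directedness of $G$, together with $G\cap D_{\alpha_i}\ne\emptyset$, produces a single $p\in G$ with $\{\alpha_0,\ldots,\alpha_k\}\subseteq\dom(p)$; this set is $p$-homogeneous (with common value $n$) and witnesses~\eqref{eq:uheqt} for $p$, contradicting $p\in R_{K,k}$.

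\textbf{Plan for (b).} Let $A=\{p_\xi:\xi<\omega_1\}\subseteq R_{K,k}$ be an $\aleph_1$-sized family in $V^{C*P}$. Using the $\Delta$-system lemma and standard pigeonhole refinements on the finite sets $u_\xi:=\dom(p_\xi)$, pass to uncountable $I\subseteq\omega_1$ on which $u_\xi=u^*\sqcup v_\xi$ with common heart $u^*$ and pairwise disjoint tails $v_\xi$ of fixed size $m$; $p_\xi{\upharpoonright}u^*=p^*$ is constant; and under the increasing enumeration $v_\xi=\{\alpha^\xi_0<\cdots<\alpha^\xi_{m-1}\}$, the values $p_\xi(\alpha^\xi_j)=c_j$ are independent of $\xi$. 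The plan is to verify that for any $\xi_1,\ldots,\xi_k\in I$ the union $p:=p_{\xi_1}\cup\cdots\cup p_{\xi_k}$ is again in $R_{K,k}$, which is precisely $k$-linkedness. Suppose not, and fix a $p$-homogeneous $X\subseteq\dom(p)$ of common value $c$ and a witness $(\nu,\beta_0,\ldots,\beta_k)$ of~\eqref{eq:uheqt}. Each $X\cap u_{\xi_j}$ is $p_{\xi_j}$-homogeneous, so $p_{\xi_j}\in R_{K,k}$ immediately precludes the case that all $k{+}1$ of the $\beta_i$'s lie inside a single $u_{\xi_j}$. The remaining ``cross-$\xi$'' case, where the $\beta_i$'s straddle several of the $v_{\xi_j}$'s (possibly together with $u^*$), must be ruled out by an additional refinement of $I$.

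\textbf{The main obstacle} is precisely this cross-$\xi$ case. The cleanest disposal is to refine further, on an uncountable subset of $I$, so that each $p_\xi{\upharpoonright}v_\xi$ is injective with range disjoint from $\mathrm{ran}(p^*)$: then a $p$-homogeneous class of value $c\notin\mathrm{ran}(p^*)$ meets each $v_{\xi_j}$ in at most one point (so $|X|\le k<k{+}1$, too few for $k{+}1$ distinct witnesses), while a class of value $c\in\mathrm{ran}(p^*)$ is forced into $u^*$ alone, where~\eqref{eq:uheqt} is already forbidden by $p^*\in R_{K,k}$ (as $p^*$ is a restriction of each $p_{\xi_j}$). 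When this favorable color-injective refinement is not directly available (e.g.\ when many $p_\xi{\upharpoonright}v_\xi$ are highly non-injective), one exploits the Cohen-genericity of the reals $\eta_{\alpha^\xi_j}$ inside $V^C$: splitting $I$ by short initial segments of the relevant $\eta$'s gives a Ramsey-type refinement that forbids the stem-based configuration of~\eqref{eq:uheqt} from being realized across distinct $\xi_j$'s. Establishing such a refinement uniformly on an uncountable subset of $I$, and doing so in a way compatible with the arbitrary choice of $k$ indices $\xi_1,\ldots,\xi_k$, is the technical heart of the proof of (b).
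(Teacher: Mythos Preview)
Your argument for (a) is correct and essentially identical to the paper's.

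For (b), however, there is a genuine gap. Your ``cleanest disposal'' via refining to conditions with $p_\xi{\upharpoonright}v_\xi$ injective and range-disjoint from $\mathrm{ran}(p^*)$ is not generally available: the conditions $p_\xi$ are given, and passing to a subfamily cannot make a non-injective function injective. You acknowledge this, but your fallback (``splitting $I$ by short initial segments of the relevant $\eta$'s gives a Ramsey-type refinement'') is too vague to count as a proof; you have not said \emph{which} initial segments to record, nor how to organize the case analysis that follows.

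The paper's key idea, which your proposal is missing, is to enrich the ``type'' with information about the Cohen reals themselves: for each $p_\xi$ one records the minimal $h$ such that the $\eta_\beta{\upharpoonright}h$ are pairwise distinct for $\beta\in u_\xi$, together with the tuple $(\eta_\beta{\upharpoonright}(h{+}1))_{\beta\in u_\xi}$. After thinning so that all $p_\xi$ have the same such type, suppose $\nu\in(k+1)^H$ and $\alpha_0,\ldots,\alpha_k$ witness \eqref{eq:uheqt} for a homogeneous set in $\bigcup_{j<k}u_{\xi_j}$. If $H\ge h$, then at most one $\beta$ in each $u_{\xi_j}$ can satisfy $\eta_\beta{\upharpoonright}H=\nu$, so the $k{+}1$ many $\alpha_i$ would have to come from $k{+}1$ distinct $u_{\xi_j}$'s --- impossible, as there are only $k$. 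If $H<h$, then both the color and $\eta_\beta{\upharpoonright}(H{+}1)$ are determined by the \emph{position} of $\beta$ within its $u_{\xi_j}$ (this is exactly what the type records), so the whole witnessing configuration can be transported into a single $u_{\xi_j}$, contradicting $p_{\xi_j}\in R_{K,k}$. This $H\ge h$ versus $H<h$ dichotomy is the technical heart you were looking for; the color-injectivity idea is a red herring and is not used.
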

Note that this proves Lemma~\ref{lem:blamain}, as $R_{K,k}$
is a witness.

\begin{proof}
Clearly each $D_\alpha$ is dense (as we can just use a hitherto unused color).
If $G$ is a filter meeting all $D_\alpha$,
then
$G$ defines a total function $p^*:\lambda\to\omega$,
and there is some $n\in\omega$ such that
$X:={p^*}^{-1}(\{n\})$ has size $\lambda$.
So~\eqref{eq:uheqt} holds for $X$, witnessed by some
$\alpha_0,\dots,\alpha_k$.
Now pick some $q\in G$ such that all $\alpha_i$ are
in the domain of $q$. Then $q$ contradicts the definition
of $R_{K,k}$.

$R_{K,k}$ is $k$-Knaster:
Given $(r_\alpha:u_\alpha\to\omega)_{\alpha\in\omega_1}$,
we thin out so that $u_\alpha$ forms a $\Delta$-system of sets of the same size
and such that each $r_\alpha$ has the same ``type'', independent
of $\alpha$, where the type contains the following information:
The color assigned to the $n$-the element of $u_\alpha$;
the (minimal, say) $h$ such that all $\eta_\beta\restriction h$
are distinct for $\beta\in u_\alpha$, and
$\eta_\beta\restriction h+1$.

We claim that the union of $k$ many
such $r_\alpha$ is still in $R_{K,k}$:
Assume towards a contradiction that there is a $\bigcup_{i<k}r_i$-homogeneous set $\alpha_0,\dots,\alpha_k$
in $\bigcup_{i<k} u_i$ such that \eqref{eq:uheqt} holds for $\nu\in(k+1)^{H}$ for some $H\in\omega$. Assume $H\ge h$.
Note that $\eta_\beta\restriction h$ are already distinct for
the different $\beta$ in the same $u_i$, so all $k+1$ many
$\alpha_j$ have to be the $n^*$-th element of different $u_i$
($n^*$ fixed), which is impossible as there are only $k$ many $u_i$.
So assume $H<h$. But then $\eta_\beta\restriction H+1$
and the color of $\beta$ both are determined by the position
of $\beta$ within $u_i$; so without loss of generality all
the $\alpha_j$ are in the same $u_i$, which is impossible
as $r_i:u_i\to \omega$ was a valid condition.

To summarize: $P$ forces
that there is a $k$-Knaster poset
$R_{K,k}$ and $\lambda$ many dense sets not met by
any filter. Therefore $P$ forces that $\mfrak(k\textnormal{-Knaster})\le
\lambda$.
\end{proof}

%We now show how to modify
%the old constructions a bit (in a trivial way) to force additional %statements about $\mfrak$ and $\pfrak$.
%We will indicate how to
%perform these changes in the following result.
%The reader who is familiar with the old constructions
%(or is willing to look them up) will indeed see that
%the modifications are straightforward and
%work as claimed.
%(Alternatively, the reader may just believe that
%the following modifications work as claimed.)

Let $\Ppre$ be the initial forcing of Lemma~\ref{lem:oldleft};
recall that it forces $\addN=\nu_1$
and $\bfrak=\nu_3$.
\begin{lemma}\label{lem:eleven}
For each of the following items (1) to (3),
and $\aleph_1\le\lambda\leq\nu_1$ regular,
$\Ppre$
can be modified to some forcing $P'$
which still strongly witnesses the
Cicho\'n-characteristics, and additionally satisfies:
\begin{enumerate}
  \item\label{item:hwfsm}
    Each iterand in $P'$ is $(\sigma,\ell)$-linked for all $\ell\ge 2$; and
    $P'$ forces
    \[
      \aleph_1=\mfrak=\mfrak(\textnormal{precaliber})\le\mathfrak p=\mathfrak b.
    \]
  \item\label{item:hwfsm2}
    Fix $k\ge 1$. Each iterand in $P'$ is $k+1$-Knaster, and additionally
    either $(\sigma,\ell)$-linked for all $\ell$ or of size less than $\lambda$; and
    $P'$ forces
    \[\aleph_1=\mfrak = \mfrak(k\textnormal{-Knaster})<
    \mfrak(k+1\textnormal{-Knaster})=\mfrak(\textnormal{precaliber})=\lambda \leq\mathfrak p=\mathfrak b.
    \]
    \item\label{item:hwflgbla}
    Each iterand in $P'$ is either
    $(\sigma, \ell)$-linked for all $\ell$, or ccc of size less than $\lambda$; and
    $P'$ forces
    \[
    \mfrak=\mfrak(\textnormal{precaliber})=\lambda\leq\mathfrak p=\mathfrak b.
    \]
\end{enumerate}
\end{lemma}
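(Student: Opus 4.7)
The overall strategy is to modify $\Ppre$ by interleaving its original iterands with additional small iterands that adjust $\mfrak$ and related Martin numbers, relying on Claim~\ref{claim:oldtonew} to preserve the Cicho\'n strong witnesses from Lemma~\ref{lem:oldleft}: all new iterands will be either of size $<\lambda\leq\nu_1$, or $\sigma$-centered, which suffices.

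For item~(\ref{item:hwflgbla}), alongside the $\Ppre$-iterands we run the standard Martin's axiom bookkeeping over all ccc posets of size $<\lambda$ together with all their $<\lambda$-sized families of dense sets, forcing $\mfrak\geq\lambda$; the matching upper bound $\mfrak\leq\lambda$ is obtained by the usual diagonalization argument at stage~$\lambda$. Then Lemma~\ref{lem:suslin}(\ref{item:corcor}) upgrades this to $\mfrak=\mfrak(\textnormal{precaliber})=\lambda$.

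For item~(\ref{item:hwfsm2}), fix $k\geq 1$ and repeat the bookkeeping, but restricted to $(k+1)$-Knaster posets of size $<\lambda$. By Lemma~\ref{lem:suslin}(\ref{item:lekwjt3}) the resulting iteration $P'$ is $(\lambda,k+1)$-Knaster, and the bookkeeping together with the diagonalization yields $\mfrak((k+1)\textnormal{-Knaster})=\lambda$. For the upper bound $\mfrak(k\textnormal{-Knaster})\leq\aleph_1$, we invoke Lemma~\ref{lem:blamain} with parameter $\lambda:=\aleph_1$: the Cohen iteration $C$ there is identified with the initial $\aleph_1$ Cohen reals of $\Ppre$, and the tail of $P'$ is $(\aleph_1,k+1)$-Knaster as a f.s.\ iteration of $(k+1)$-Knaster forcings. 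Lemma~\ref{lem:suslin}(\ref{item:corcor}) then promotes $\mfrak((k+1)\textnormal{-Knaster})=\lambda$ to $\mfrak(\textnormal{precaliber})=\lambda$.

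For item~(\ref{item:hwfsm}), take $P':=\Ppre$ itself (no modification needed). Every iterand is $(\sigma,\ell)$-linked for every $\ell$, so by Lemma~\ref{lem:suslin}(\ref{item:lekwjt4}) the whole iteration is $\ell$-Knaster for every $\ell$. The first Cohen real of $\Ppre$ adds a Suslin tree by Lemma~\ref{lem:suslin}(\ref{item:lekwjt1}), preserved throughout by Lemma~\ref{lem:suslin}(\ref{item:lekwjt2}); this yields $\mfrak=\aleph_1$ via Lemma~\ref{lem:suslin}(\ref{item:lekwjt0}). The harder inequality $\mfrak(\textnormal{precaliber})\leq\aleph_1$ requires a precaliber-$\aleph_1$ witness rather than a Suslin tree; the plan is to adapt the construction of $R_{K,k}$ in Lemma~\ref{lem:blauwz} to the precaliber class, using a ZFC strong coloring on $\omega_1$ to produce a poset that is $\ell$-Knaster for every $\ell$ (hence preserved by $P'$) yet fails $\textnormal{MA}_{\aleph_1}$ for precaliber. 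This last step is the main obstacle: since the paper does not give a lemma directly producing a precaliber-but-non-MA witness analogous to $R_{K,k}$, a new combinatorial construction exploiting the cofinally many Cohen reals supplied by $\Ppre$ must be carried out.
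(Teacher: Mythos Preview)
There are two genuine gaps.

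\textbf{First gap: $\mathfrak p=\mathfrak b$.} In all three items you never arrange $\mathfrak p\ge\nu_3$. In item~(\ref{item:hwfsm}) you set $P'=\Ppre$, but $\Ppre$ only forces the Cicho\'n values of Lemma~\ref{lem:oldleft}; nothing there pushes $\mathfrak p$ up to $\nu_3=\bfrak$. In items~(\ref{item:hwfsm2}) and~(\ref{item:hwflgbla}) your bookkeeping handles posets of size ${<}\lambda\le\nu_1$, which at best yields $\mathfrak p\ge\lambda$. The paper's fix (applied uniformly before splitting into cases) is to interleave, cofinally, \emph{all $\sigma$-centered posets of size ${<}\nu_3$}; this is allowed by Claim~\ref{claim:oldtonew} (second bullet) and gives $\mathfrak p\ge\nu_3$, while keeping every iterand $(\sigma,\ell)$-linked for all $\ell$.

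\textbf{Second gap: $\mfrak(\textnormal{precaliber})=\aleph_1$ in item~(\ref{item:hwfsm}).} You correctly identify this as the obstacle and propose inventing a new combinatorial witness analogous to $R_{K,k}$. This is unnecessary: the paper simply quotes Barnett~\cite[Sect.~2]{Barnett} (building on Devlin--Shelah~\cite{DevS}): after one Cohen real there is a precaliber-$\aleph_1$ poset $Q^*$ (coming from a ladder-system coloring) such that no $\sigma$-linked poset can add a filter meeting $\aleph_1$ many specified dense subsets of $Q^*$. Since every iterand of $P'$ is $(\sigma,2)$-linked, the tail of $P'$ is $\sigma$-linked over the Cohen stage, and $\mfrak(\textnormal{precaliber})=\aleph_1$ follows.

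\textbf{A smaller issue.} Your ``diagonalization at stage $\lambda$'' for the upper bounds $\mfrak\le\lambda$ (item~(\ref{item:hwflgbla})) and $\mfrak((k{+}1)\textnormal{-Knaster})\le\lambda$ (item~(\ref{item:hwfsm2})) is not an argument. The paper obtains these exactly as you obtain $\mfrak(k\textnormal{-Knaster})\le\aleph_1$: since each iterand is either of size ${<}\lambda$ or $(\sigma,\ell)$-linked, Lemma~\ref{lem:suslin}(\ref{item:lekwjt4}) makes $P'$ $(\lambda,\ell)$-Knaster for every $\ell$, and then Lemma~\ref{lem:blamain} (applied with parameter $\lambda$, using a block of $\lambda$ Cohen reals) gives $\mfrak(\ell\textnormal{-Knaster})\le\lambda$ for all $\ell$. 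The precaliber upper bound then follows from Lemma~\ref{lem:suslin}(\ref{item:corcor}).
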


\begin{proof}
An argument like in \cite{Br} works.
We first modify $\Ppre$ as follows:

We construct an iteration $P$ with the same index set $\delta$
as $\Ppre$; we partition $\delta$ into two cofinal sets
$\delta=S_{\textrm{old}} \cup S_{\textrm{new}}$ of the same size.
For $\alpha\in S_{\textrm{old}}$  we define $Q_\alpha$
as we defined $Q^*_\alpha$ for $\Ppre$.
For $\alpha\in S_{\textrm{new}}$, pick
(by suitable book-keeping) a small (less than $\nu_3$, the value for $\bfrak$)
$\sigma$-centered forcing $Q_\alpha$.

As $\cf(\delta)\ge \lambda_{\mathfrak b}$, we get that $P$ forces $\mathfrak p\ge \nu_3$. Also, $P$ still adds strong witnesses for the
Cicho\'n-characteristics, according to Claim~\ref{claim:oldtonew}:
All new iterands are smaller than $\nu_3$
and $\sigma$-centered.

%From the point of $\lambda_{\mathfrak b}$ (and all
%larger ones), the new iterands are small,
%so they do not interfere with the ultrafilter
%argument
%which shows that $\mathfrak b\le \lambda_{\mathfrak b}$,
%nor do they interfere with goodness used for the %characteristics intended to be larger
%than $\mathfrak b$ (in this case just $\nonM$).
%Also, each new $Q_\alpha$ is $\sigma$-centered, and thus
%satisfies goodness for the characteristics intended to be smaller than or equal to $\mathfrak b$ (i.e.,
%$\addN$, $\covN$ and $\bfrak$).

Note that all iterands are still $(\sigma,k)$-linked for all $k$
(as the new ones are even $\sigma$-centered).\smallskip

To deal with $\ell$-Knaster, recall that
the first $\lambda_\infty$ iterands are Cohen forcings;
and we call these Cohen reals $\eta_\alpha$
($\alpha\in\lambda_\infty$).
Given $\ell$, we can (and will) interpret the Cohen real $\eta_\alpha$ as an element of $(\ell+1)^\omega$.\smallskip

\noindent(\ref{item:hwfsm})
Recall from \cite[Sect. 2]{Barnett} that, after a Cohen real, there is a precaliber $\omega_1$ poset $Q^*$ such that no $\sigma$-linked poset adds a filter intersecting certain $\aleph_1$-many dense subsets of $Q^*$.\footnote{To be more precise, after one Cohen real there is a sequence $\bar{r}=\la r_\alpha:\omega\to 2:\alpha\in\omega_1\text{\ limit}\ra$ such that, for any ladder system $\bar{c}$ from the ground model, the pair $(\bar{c},\bar r)$, as a ladder system coloring, cannot be uniformized in any stationary subset of $\omega_1$. Furthermore, this property is preserved after any $\sigma$-linked poset. Also recall from \cite{DevS} (with Devlin) that $\mfrak(\textnormal{precaliber})>\aleph_1$ implies that any ladder system coloring can be uniformized.} Therefore, the $P$ we just constructed forces $\mfrak(\textnormal{precaliber})=\aleph_1$.
%The $P$ we just constructed satisfies the following for every $k\in\omega$:
%Each iterand is $(\sigma,k+1)$-linked; accordingly the iteration
%is $k+1$-Knaster, i.e., $(\aleph_1,k+1)$-Knaster.
%By Lemma~\ref{lem:blamain}, $P$ forces $\mfrak(k\textnormal{-Knaster})=\aleph_1$.

\noindent(\ref{item:hwfsm2}) Just as with the modification from $\Ppre$ to $P$,
we now further modify $P$ to force (by some bookkeeping)
with all small (smaller than $\lambda$)
$k+1$-Knaster forcings.
So the resulting iteration obviously forces $\mfrak(k+1\textnormal{-Knaster})\ge \lambda$.

Note that now all iterands are either smaller than $\lambda\le\nu_1$
or $\sigma$-linked (so we can again use  Claim~\ref{claim:oldtonew}); and additionally all iterands are $k+1$-Knaster.
So $P$ is both $(\aleph_1,k+1)$-Knaster and
$(\lambda,\ell)$-Knaster for any $\ell$.
Again by Lemma~\ref{lem:blamain},
$P$ forces both
$\mfrak(k\textnormal{-Knaster})=\aleph_1$ and
$\mfrak(\ell\textnormal{-Knaster})\le \lambda$ for any $\ell$,
(which implies $\mfrak(k+1\textnormal{-Knaster})=\lambda$).
\smallskip

\noindent(\ref{item:hwflgbla})
%We do the same as (\ref{item:hwflg}), but now
%``small'' means less than $\lambda$.
This is very similar, but this time we use all small
ccc forcings (not just the $k+1$-Knaster ones).
This obviously results in $\mfrak\ge\lambda$;
and the same argument as above shows that still
$\mfrak(\ell\textnormal{-Knaster})\le \lambda$ for all $\ell$.
\end{proof}

This, together with Corollary~\ref{cor:trivial}
and Lemma~\ref{lem:oldnew} gives us
11 characteristics.
However, we postpone this collorally until a time we can also
add $\hfrak=\gfrak=\pfrak=\kappa$ in Lemma~\ref{lem:twelve}.

\section{Dealing with the precaliber number}\label{sec:precaliber}
%\bluetext{Martin says: again, if this forcing is known, we should cite a reference. (Todorcevic? Kunen?)    Again, we should ask on mathoverflow. }

%This section is not required for the rest of the paper.

Recall the possible constellations for the Knaster numbers and the definition of $k^*$ and $\lambda$
given in~\eqref{eq:knisterknaster}.
(And recall that if $k^*<\omega$, i.e., then
$\mfrak(\textnormal{precaliber})=\lambda$ as well.)

In this section, we construct models for
$k^*=\omega$, i.e., all Knaster numbers being $\aleph_1$, while
$\mfrak(\textnormal{precaliber})=\lambda$ for some given
regular $\aleph_1<\lambda\le \addN$
(and for the ``old''
values for the Cicho\'n-characteristics,
as in the previous section).

\begin{definition}\label{def:51} Let $\lambda>\aleph_1$ be regular.
   A condition $p\in P_{\precal}=P_{\precal,\lambda}$ consists of
   \begin{itemize}
       \item[(i)] finite sets $u_p, F_p\subseteq\lambda$,
       \item[(ii)] a function $c_p:[u_p]^2\to 2$,
       \item[(iii)] for each $\alpha\in F_p$, a function $d_{p,\alpha}:\mathcal{P}(u_p\cap\alpha)\to\omega$ satisfying
       \begin{itemize}
           \item[$(\star)$] if $\alpha\in F_p$ and
           $s_1$, $s_2$ are
       1-homogeneous (w.r.t.\ $c_p$)\footnote{Say that $s\subseteq u_p\cap \alpha$ is \emph{$1$-homogeneous w.r.t.\ $c_p$} if $c_p(\xi,\zeta)=1$ for any $\xi\neq\zeta$ in $s$.} subsets of $u_p\cap \alpha$ with
       $d_{p,\alpha}(s_1)=d_{p,\alpha}(s_2)$, then $s_1\cup s_2$ is 1-homogeneous.
       \end{itemize}
   \end{itemize}
   The order is defined by $q\leq p$ iff $u_p\subseteq u_q$, $F_p\subseteq F_q$, $c_p\subseteq c_q$ and $d_{p,\alpha}\subseteq d_{q,\alpha}$ for any $\alpha\in F_p$.
\end{definition}

\begin{lemma}\label{forcecolor}
   $P_{\precal}$ has precaliber $\omega_1$ (and in fact precaliber $\mu$
   for any regular uncountable $\mu$) and forces the following:
   \begin{enumerate}
      \item The generic functions $c:[\lambda]^2\to\{0,1\}$ and $d_\alpha:[\alpha]^{<\aleph_0}\to\omega$ for $\alpha<\lambda$ are totally defined.
       \item Whenever $(s_i)_{i\in I}$
       is a family of
       finite, 1-homogeneous (w.r.t.\ $c$) subsets of~$\alpha$,
       and $d_\alpha(s_i)=d_\alpha(s_j)$ for $i,j\in I$, then $\bigcup_{i\in I} s_i$ is 1-homogeneous.
       \item If $A\subseteq[\lambda]^{<\aleph_0}$ is a family of size $\lambda$ of pairwise disjoint sets, then there are two sets $u\ne v$ in $A$ such that $c(\xi,\eta)=0$ for any $\xi\in u$ and $\eta \in v$.
       \item Whenever $u\in[\lambda]^{<\aleph_0}$, the set $\{\eta<\lambda : \forall \xi\in u (c(\xi,\eta)=1)\}$ is unbounded in~$\lambda$.
   \end{enumerate}
\end{lemma}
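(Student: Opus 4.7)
I would treat the five items in turn, with (1) as the main obstacle and (2), (3), (4), (5) following from density and variants of (1). For (2), the sets $\{p:\alpha\in u_p\}$ and $\{p:\alpha\in F_p\}$ are both dense, shown by extending any given $p$ by adjoining $\alpha$ with arbitrary new $c$-values and fresh $\omega$-labels for any new $1$-homogeneous subsets in the domain of a newly introduced $d_{p,\alpha}$, preserving $(\star)$. The same method gives (5): given finite $u\subseteq\lambda$ and $\beta<\lambda$, adjoin some $\eta>\beta$ to $u_p$ with $c_p(\xi,\eta)=1$ for all $\xi\in u$, again assigning fresh $d$-labels to the new $1$-homogeneous subsets involving $\eta$. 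Item (3) is the forced form of $(\star)$: any finite subfamily of $(s_i)_{i\in I}$ with common $d_\alpha$-value is realized by a single condition in the generic filter, on which $(\star)$ gives $1$-homogeneity of the union; directed unions extend this to all of $I$.

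The main obstacle is (1), precaliber $\mu$ for any regular uncountable $\mu$. Given $(p_\xi:\xi<\mu)$, apply the $\Delta$-system lemma to $u_{p_\xi}\cup F_{p_\xi}$ to extract a $\mu$-sized subfamily with common heart $a\cup b$, and then, using pigeonhole (each condition being a finite object with data in $\omega$), thin further so that the conditions are pairwise \emph{isomorphic over the heart}: $u_{p_\xi}$ and $F_{p_\xi}$ have constant size with the heart in the same positions in the natural increasing enumerations, and both $c_{p_\xi}$ and each $d_{p_\xi,\alpha}$ depend only on the position-type of their arguments. To show this thinned family is centered, for $p_{\xi_1},\ldots,p_{\xi_k}$ I would construct a common lower bound $q$ with $u_q=\bigcup_i u_{p_{\xi_i}}$ and $F_q=\bigcup_i F_{p_{\xi_i}}$, setting $c_q=1$ on every ``truly cross'' pair (endpoints in distinct non-heart parts $u_{p_{\xi_i}}\setminus a$ and $u_{p_{\xi_j}}\setminus a$), and defining $d_{q,\alpha}$ on $1$-homogeneous $s\subseteq u_q\cap\alpha$ by $d_{q,\alpha}(s):=d_{p_{\xi_i},\alpha}(s)$ whenever $s\subseteq u_{p_{\xi_i}}\cap\alpha$ for some single $i$ (well-defined on the overlap $s\subseteq a$ by position-independence), and assigning a fresh $\omega$-value outside the range of all $d_{p_{\xi_i},\alpha}$ to any $1$-homogeneous $s$ with non-heart parts in two or more components.

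Verifying $(\star)$ for $q$ is the hardest part. Two $1$-homogeneous subsets with equal fresh value must coincide; the substantive case is two single-component $1$-homogeneous $s_\ell\subseteq u_{p_{\xi_{i_\ell}}}\cap\alpha$ with position-sets $K_\ell$ and $v_\alpha(K_1)=v_\alpha(K_2)$, where $v_\alpha$ is the common position-valued form of $d_{p_\xi,\alpha}$. Applying $(\star)$ for any single $p_{\xi_{i_\ell}}$ via position-independence shows that $K_1\cup K_2$ is $1$-homogeneous in the position-coloring, and a case analysis of pairs in $s_1\cup s_2$ (pairs inside $s_1$, inside $s_2$, heart--heart cross, heart--nonheart cross within a single component, and truly nonheart--nonheart cross across components) yields that every pair is $1$-colored in $c_q$---using either the position-homogeneity of $K_1\cup K_2$ for pairs mediated by the original $c_{p_{\xi_i}}$'s, or the choice $c_q=1$ for truly cross pairs. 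Item (4) follows by a variant: given a name $\dot A$ for a pairwise disjoint family of size $\lambda$ and a condition $p$, I would select $(p_\alpha,u_\alpha)_{\alpha<\lambda}$ with $p_\alpha\leq p$ forcing $u_\alpha\in\dot A$, apply the same $\Delta$-system and position-type thinning (pairwise disjointness of $\dot A$ forces the $u_\alpha$ into the non-heart parts, since any shared heart contribution would be the same across $\alpha$ and violate disjointness), and then construct a common extension of $p_{\alpha_1},p_{\alpha_2}$ with $c_q=0$ on the specific cross pairs in $u_{\alpha_1}\times u_{\alpha_2}$; consistency with $(\star)$ is preserved by assigning fresh $\omega$-values to all new $1$-homogeneous subsets that arise.
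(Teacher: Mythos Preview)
Your treatment of the density arguments (items (1) and (4)), the forced form of $(\star)$ (item (2)), and precaliber is essentially correct and follows the paper closely. Your choice of $c_q=1$ on \emph{all} cross pairs in the precaliber amalgamation is a minor variation---the paper instead sets $c_q=0$ on cross pairs with both coordinates above $\max(F)$ and $c_q=1$ otherwise---but your $(\star)$-verification, transporting $s_1$ to the other component via the type-isomorphism, is exactly the paper's argument.

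There is, however, a genuine gap in your argument for item (3). You correctly observe that the $u_\alpha$ land in the non-heart parts of the $u_{p_\alpha}$'s, but you are missing a further thinning step: one must also arrange that these non-heart parts lie \emph{above} $\max(F)$, where $F$ is the heart of the $F_{p_\alpha}$-$\Delta$-system. Without this, your proposed amalgam $q$ can violate $(\star)$. Concretely, suppose $\alpha\in F$ and some $x\in u_{\alpha_1}$ satisfies $x<\alpha$; by position-type agreement there is a corresponding $x'\in u_{\alpha_2}$ in the same position, also below $\alpha$, with $d_{p_{\alpha_1},\alpha}(\{x\})=d_{p_{\alpha_2},\alpha}(\{x'\})$. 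Since $q\le p_{\alpha_1},p_{\alpha_2}$ forces $d_{q,\alpha}$ to extend both $d_{p_{\alpha_i},\alpha}$, these values cannot be changed, and $(\star)$ for $q$ then demands $c_q(x,x')=1$---contradicting your assignment $c_q(x,x')=0$ on $u_{\alpha_1}\times u_{\alpha_2}$. Your appeal to ``fresh $\omega$-values for all new $1$-homogeneous subsets'' does not help: the offending subsets $\{x\},\{x'\}$ are \emph{old} singletons with inherited $d$-values. The paper's single amalgamation is tailored to serve both precaliber and (3) at once: colouring cross pairs $0$ only when both coordinates exceed $\max(F)$ means that the $(\star)$-verification (which only involves elements below some $\alpha\in F$) never encounters a $0$-coloured cross pair; and after the additional thinning ensuring the non-heart parts lie above $\max(F)$, the elements of $u_{\alpha_1}\times u_{\alpha_2}$ are exactly such pairs, so $c_q$ is automatically $0$ there.
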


\begin{proof}
    \emph{For any $\alpha<\lambda$, the set of conditions $p\in P_{\precal}$ such that $\alpha\in F_p$ is dense.}

    Starting with $p$ such that $\alpha\notin F_p$,
    we set $u_q=u_p$, $F_q=F_p\cup\{\alpha\}$,
    and we pick new and unique values for all $d_{q,\alpha}(s)$
    for $s\subseteq u_q\cap\alpha=u_p\cap\alpha$, as well as new and unique values for all $d_{q,\beta}(s)$ for $s\subseteq u_q\cap\beta$ with $\alpha\in s$. We have to show that $q\in P_{\precal}$,
    i.e., that it satisfies $(\star)$:
    Whenever $s_1,s_2$ satisfy the assumptions of
    $(\star)$, then $\alpha\notin s_i$
    (for $i=1,2$), as we would otherwise have chosen different
    values. So we can use that $(\star)$ holds for $p$.\smallskip

    \noindent\textbf{(1) and (4)}
    \emph{For any $\xi<\lambda$, the set of $q\in P_{\precal}$ such that $\xi\in u_q$ is dense.}

    Starting with $p$ with $\xi\notin u_p$, we set $u_q=u_p\cup\{\xi\}$ and $F_q=F_p$.
    Again, pick new (and different) values for all $d_{q,\alpha}(s)$
    with $\xi\in s$,
    and we can set $c(x,\xi)$ to whatever we want.
    The same argument as above shows that $q\in P_{\precal}$.
    In particular we can set all $c(x,\xi)=1$,
    which shows that $P_{\precal}$ forces (4).\smallskip

    \noindent\textbf{(2)} follows from $(\star)$ for $I=\{1,2\}$,
    and this trivially implies the case for arbitrary $I$.
    (For $x_1,x_2\in \bigcup_{i\in I} s_i$, pick $i_1,i_2\in I$
    such that $x_1\in s_{i_1}$ and $x_2\in s_{i_2}$;
    then apply $(\star)$ to $\{i_1,i_2\}$.)\smallskip

   \noindent\textbf{Precaliber.} \emph{$P_{\precal}$ has precaliber $\mu$ for any uncountable regular $\mu$.}

   Let $A\subseteq P_\precal$ have size $\mu$.
   We can assume that the $u_p$'s and $F_p$'s for $p\in A$ form $\Delta$-systems with roots $u$ and $F$ respectively, and we can assume that all $p\in A $ have the same  ``type (over $u$, $F$)'', which is
   defined as follows:
   %does not depend on $\xi$.
   %Then any finite subset of these conditions is compatible, witnessed
   %by its amalgamation.\smallskip

%Indeed, if $\alpha\notin F_p$, we can add it to $F_p$ and define $d_{p,\alpha}$ being one-to-one. These dense sets determines how $c$ and $d_\alpha$ ($\alpha<\lambda$) are defined in the generic extension, and they ensure property (3).
   %Note that the trivial condition $\triv$ is defined by $u_\triv=F_\triv=\emptyset$.

   %Let $p\in P$ and $u\subseteq u_p$ and $F\subseteq F_p$
   %(let us call $u,F$ the ``heart'').
   %Then we define the \emph{type} of $p$ (with respect to the heart)
   %as the following structure:

   %We define the type of $p$ (over $u$ and $F$) as follows:
   Let $i$ be the order-preserving bijection (Mostowski's collapse) of $u_p\cup F_p$ to some  $N\in \omega$.
   %which also translates the
   %partial functions $c_p$ and $d_p$ and the subsets
   %$u_p$ and $F_p$.
   %Then the type is the induced structure on $N$.
 This induces sets $\bar u\subseteq \bar u_p\subseteq N$ and  $\bar F\subseteq \bar F_p \subseteq N$ and
 partial functions $\bar c_p:[\bar u_p]^2\to 2$, $\bar d_{ p,\bar\alpha}: \mathcal{P}(\bar u_p\cap\bar\alpha)\to\omega$
 (for $\bar\alpha\in \bar F_p$), such that $i$ is an isomorphism between the structures $p=(u_p\cup F_p, u_p, F_p, u, F, c_p, (d_{p,\alpha})_{\alpha\in F_p})$
 and $\bar p=(N,\bar u_p,\bar  F_p, \bar u, \bar F,\bar c_p, (\bar d_{p,\bar\alpha})_{\bar\alpha\in \bar F_p})$; the latter structure is called the \emph{type} of $p$ (over $u$, $F$).

 Let us note some trivial facts:
 There are only countably many different types;
 between any two conditions with same type there
 is a natural isomorphism; and
   %Assume $p_0, p_1, \dots p_{n-1}$ are in $P_{\precal}$.
   %We set
   %$u_i:=u_{p_i}$, and we do the same for $F$, $c$, and $d$.
   %Assume $u_i$ and $F_i$ form $\Delta$-systems with hearts
   %$u$ and $F$, and that
   %$p_i$ have the same type for $i\in n$ with respect to
   %$u, F$.
   if $p$ and $q$ have the same type
   (over $u=u_p\cap u_q$ and $F=F_p\cap F_q$), then
   $c_p$ and $c_q$ agree on the common domain, and the
   same holds for $d_p$ and $d_q$.\footnote{I.e., for $\alpha<\beta$ in $u$,
   $c_p(\alpha,\beta)=c_q(\alpha,\beta)$, and for $\alpha\in F$
   and $s\subseteq u$, $d_{p,\alpha}(s)=d_{q,\alpha}(s)$.}

%   {\color{blue} D. says: The ``over $u$, $F$" of the type is not reflected in the definition. I think $u$ and $F$ need to be included in the type, in the sense that they also induce sets $\bar{u}\subseteq\bar{u}_p$ and $\bar{F}\subseteq\bar{F}_p$. Otherwise, it will not be clear why $p$ and $q$ with the same type \emph{over $u$, $F$} would agree on $u$ and $F$.}

   To summarize: Given $A\subseteq P_\precal$ of size $\mu$,
   we can find a $\mu$-sized subset $B$
   forming a $\Delta$-system such that
   all elements have the same type (over the root).
   We claim that then any finite subset
   of $B$
   has a common lower bound $q$ (which implies that
   $P_\precal$ has precaliber $\mu$, as required). This is done by amalgamation, as follows:

   \smallskip

   \noindent\textbf{Amalgamation.}
   Fix $p_0,\dots,p_{n-1}$ of the same type
   over $(u,F)$, such that $u_i\cap u_j=u$ and
   $F_i\cap F_j=F$ for all $i,j$ in $n$ (where we set
   $u_i:=u_{p_i}$ etc.).
   We define an ``amalgam'' $q$
   of these conditions
   as follows: $u_q:=\bigcup_{i\in n} u_i$,
   $F_q:=\bigcup_{i\in n} F_i$, $d_q$ extends all $d_i$
   and has a unique new value for each new element in its domain,
   $c_q$ extends all $c_i$; and yet undefined
   $c_q(x,y)$ are set to
   $0$ if $x,y>\max(F)$ (and $1$ otherwise).

   To see that $q \in P_{\precal}$, assume that $\alpha\in F_q$ and
   $s_1,s_2$ are as in $(\star)$ of Definition~\ref{def:51}. This implies that
   $d_{q,\alpha}(s_k)$ for both $k=1,2$ were already defined\footnote{By which we mean $\alpha\in F_i$ and
   $s_k\subseteq u_i$ for both $k=1,2$.}
   by one of the $p_i$ (for $i\in n$),
   otherwise we would have picked a new value.

   %in either $p_1$ or $p_2$ (otherwise we would have picked different values).
   If they are both defined by the same $p_i$,
   we can use $(\star)$ for $p_i$. So assume otherwise,
   and for notational simplicity assume that $s_i$ is defined by
   $p_i$; and let $x_i\in s_i$. We have to show $c_q(x_1,x_2)=1$.
   Note that $\alpha\in F_1\cap F_2 = F$.
   If $x_1$ or $x_2$ are not in $u$,
   then we have set $c_q(x_1,x_2)$ to 1 (as $x_i<\alpha\in F$),
   so we are done. So assume $x_1,x_2\in u$.
   The natural isomorphism between
   $p_1$ and $p_2$
   maps $s_1$ onto some $s'_1\subseteq u_2$, and we get
   that $s'_1$ is 1-homogeneous and that
   $d_{2,\alpha}(s_2)=d_{1,\alpha}(s_1)=d_{2,\alpha}(s'_1)$.
   So we use that $p_2$ satisfies $(\star)$ to get
   that $c_2(a,b)=1$ for all $a\in s_2$ and  $b\in s'_1$.
   As the isomorphism does not move $x_1$, we can use
   $a=x_2$ and $b=x_1$.

   \smallskip

   \noindent\textbf{(3)}
   Let $p\in P_{\precal}$ and assume that $p$ forces that $\dot{A}\subseteq[\lambda]^{<\aleph_0}$ is a family of size $\lambda$ of pairwise disjoint sets. We can find, in the ground model, a family $A'\subseteq[\lambda]^{<\aleph_0}$ of size $\lambda$ and conditions $p_v\leq p$ for $v\in A'$ such that $v\subseteq u_{p_v}$, and $p_v$ forces $v\in\dot{A}$.
   We again thin out to a $\Delta$-system as above;
   this time we can additionally assume that the heart
   of the $F_v$ is below the non-heart parts of all $u_v$, i.e.,
   that $\max(F)$ is below $u_v\setminus u$ for all~$v$.

   Pick any two  $p_v$, $p_w$ in this $\Delta$-system,
   and let $q$ be the amalgam defined above.
   Then $q$ witnesses that $p_v,p_w$ are compatible, which
   implies $v\cap w=0$, i.e., $v,w$ are outside the heart;
   which by construction of $q$
   implies that $c_q$ is constantly zero on
   $v\times w$ (as their elements are above $\max(F)$).
   %The proof for precaliber $\xi$ for any uncountable regular $\xi\le\lambda$
   %is the same (and there is nothing to for $\xi<\lambda$).
   %\qedhere
\end{proof}

The poset $P_{\precal,\lambda}$ adds
generic functions $c$ and $d_\alpha$. We now use them to define
a precaliber $\omega_1$ poset $Q_{\precal}$ witnessing $\mfrak(\textnormal{precaliber})\le\lambda$:

\begin{lemma}\label{colorposet}
   In $V^{P_{\precal}}$, define the poset $Q_{\precal}:=\{u\in[\lambda]^{<\aleph_0} : u \text{\ is 1-homogeneous}\}$, ordered by $\supseteq$ (By 1-homogeneous,
   we mean 1-homogeneous with respect to $c$.)
   Then the following is satisfied (in $V^{P_{\precal}}$):
   \begin{enumerate}
       \item\label{item:a} $Q_{\precal}$ is an increasing union of length $\lambda$ of centered sets (so in particular it has precaliber $\aleph_1$).
       \item\label{item:b} For $\alpha<\lambda$, the set $D_\alpha:=\{u\in Q_{\precal} : u\nsubseteq\alpha\}$ is open dense. So $Q_{\precal}$ adds a cofinal generic 1-homogeneous subset of $\lambda$.
       \item\label{item:c}
       There is no 1-homogeneous set of size $\lambda$ (in $V^{P_{\precal}}$).
       In other words, there is no filter meeting all $D_\alpha$.
   \end{enumerate}
\end{lemma}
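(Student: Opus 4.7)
The plan is to derive each item directly from the properties of the generic $c$ and $d_\alpha$ in Lemma~\ref{forcecolor}, working in $V^{P_\precal}$. For (\ref{item:a}) I would first note that $Q_\precal = \bigcup_{\alpha<\lambda} Q_\precal|_\alpha$ is increasing in $\alpha$, where $Q_\precal|_\alpha := \{u \in Q_\precal : u \subseteq \alpha\}$, since every $u$ is a finite subset of $\lambda$. Inside each $Q_\precal|_\alpha$ I would partition by $d_\alpha$-value, setting $C_{\alpha,n} := \{u \in Q_\precal|_\alpha : d_\alpha(u) = n\}$. By Lemma~\ref{forcecolor}(2), any finite subfamily of $C_{\alpha,n}$ has 1-homogeneous union, which is a common lower bound in $(Q_\precal,\supseteq)$, so each $C_{\alpha,n}$ is centered and each $Q_\precal|_\alpha$ is $\sigma$-centered. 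Precaliber $\aleph_1$ then follows by a double pigeonhole: any $\omega_1$ conditions are bounded inside some $Q_\precal|_\alpha$ (using regularity of $\lambda>\aleph_1$, so $\omega_1<\cf(\lambda)$), and among them $\aleph_1$-many share a $d_\alpha$-value, yielding an uncountable centered subfamily; the same argument with $\mu$ in place of $\omega_1$ gives precaliber $\mu$ for every regular uncountable $\mu<\lambda$.

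Part (\ref{item:b}) should be essentially a one-liner: $D_\alpha$ is upward closed in $(Q_\precal,\supseteq)$, and for density, given $u \in Q_\precal$, Lemma~\ref{forcecolor}(4) provides $\eta<\lambda$ with $\eta > \max(u \cup \{\alpha\})$ and $c(\xi,\eta)=1$ for all $\xi \in u$, so $u \cup \{\eta\}$ is 1-homogeneous, extends $u$, and lies in $D_\alpha$.

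For (\ref{item:c}) I would argue by contradiction. If $A \subseteq \lambda$ were 1-homogeneous with $|A|=\lambda$, applying Lemma~\ref{forcecolor}(3) to the pairwise disjoint family $\{\{\xi\} : \xi \in A\}$ would yield $\xi \ne \eta$ in $A$ with $c(\xi,\eta)=0$, contradicting 1-homogeneity. The filter reformulation is then automatic: any filter $G \subseteq Q_\precal$ meeting every $D_\alpha$ would give a 1-homogeneous $\bigcup G \subseteq \lambda$ (any two elements lie in some common $u \in G$, which is 1-homogeneous) that is unbounded in $\lambda$ by the $D_\alpha$'s, hence of size $\lambda$ by regularity. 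The only step requiring real work is (\ref{item:a}), and there the nontrivial input is exactly property (2) of Lemma~\ref{forcecolor}---i.e.\ condition $(\star)$ of Definition~\ref{def:51}---which is what promotes the $d_\alpha$-pigeonhole to centeredness.
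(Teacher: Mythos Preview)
Your proposal is correct and follows essentially the same approach as the paper: for (\ref{item:a}) the paper likewise sets $Q^\alpha_{\precal}=Q_{\precal}\cap[\alpha]^{<\aleph_0}$ and observes that $d_\alpha$ is a centering function by Lemma~\ref{forcecolor}(2); parts (\ref{item:b}) and (\ref{item:c}) are reduced to Lemma~\ref{forcecolor}(4) and (3) exactly as you do. Your write-up is simply more detailed (e.g.\ spelling out the double pigeonhole for precaliber and the filter-to-unbounded-set translation), but there is no substantive difference in strategy.
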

\begin{proof}
   For~(\ref{item:a}) set $Q^\alpha_{\precal}=Q_{\precal}\cap [\alpha]^{<\aleph_0}$.
   Then $d_\alpha:Q^\alpha_{\precal}\to \omega$ is a centering function,
   according to Lemma~\ref{forcecolor}(2). Precaliber $\aleph_1$ is a consequence of $\lambda_\precal>\aleph_1$.

   %, assume that $\{u_\zeta:\zeta<\omega_1\}\subseteq Q_{\precal}$. Since $\lambda>\aleph_1$, we can find some upper bound $\alpha<\lambda$ of this family.
   %We can shrink this family so that its $d_\alpha$-image is constant, hence, by , this family is centered.

   Property~(\ref{item:b}) is a direct consequence of Lemma~\ref{forcecolor}(4), and~(\ref{item:c}) follows
   from Lemma~\ref{forcecolor}(3).
\end{proof}

This shows that $P_{\precal,\lambda}\Vdash \mfrak(\textnormal{precaliber}) \le \lambda$. We now show
that this is preserved in further Knaster extensions.

\begin{lemma}\label{lem:blubb43}
   In $V^{P_{\precal}}$, assume that $P'$ is a ccc $\lambda$-Knaster poset.
   Then, in $V^{P_{\precal}\ast P'}$,
   $\mfrak(\textnormal{precaliber}) \le \lambda$.
\end{lemma}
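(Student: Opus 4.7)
My plan is to show that the poset $Q_\precal$, as defined in $V^{P_\precal}$, continues to witness $\mfrak(\textnormal{precaliber})\le\lambda$ in $V^{P_\precal\ast P'}$. Since $Q_\precal$ consists of finite subsets of $\lambda$ (with order $\supseteq$) and the dense sets $D_\alpha$ of Lemma~\ref{colorposet} are defined from these without reference to any further generic, both $Q_\precal$ and each $D_\alpha$ are literally the same objects in $V^{P_\precal}$ and in $V^{P_\precal\ast P'}$. So two things remain to be verified in $V^{P_\precal\ast P'}$: that $Q_\precal$ still has precaliber $\aleph_1$, and that no filter meets all the $D_\alpha$.

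The first claim is straightforward. In $V^{P_\precal}$, by Lemma~\ref{colorposet}(1), $Q_\precal$ is the union of an increasing chain of centered sets $(Q_\precal^\alpha)_{\alpha<\lambda}$. Since $P'$ is ccc (preserving $\aleph_1$) and $\lambda$-Knaster (hence $\lambda$-cc, preserving the regularity of $\lambda>\aleph_1$), in $V^{P_\precal\ast P'}$ every $\aleph_1$-sized subset $A\subseteq Q_\precal$ has $|\bigcup A|\le\aleph_1<\lambda$ and hence is bounded in $\lambda$, so lies in some centered $Q_\precal^\alpha$.

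The substantive part is the second claim. Suppose towards contradiction that in $V^{P_\precal\ast P'}$ some filter $G\subseteq Q_\precal$ meets all $D_\alpha$; then $H:=\bigcup G$ is a 1-homogeneous (w.r.t.\ $c$) subset of $\lambda$ that is unbounded, hence of size $\lambda$. Working in $V^{P_\precal}$ with a $P'$-name $\dot H$ and some $p^*\in P'$ forcing this, I would recursively pick a strictly increasing sequence $(\xi_\alpha)_{\alpha<\lambda}$ together with conditions $p_\alpha\le p^*$ such that $p_\alpha\Vdash_{P'} \xi_\alpha\in\dot H$ (possible because $p^*$ forces $\dot H$ to be unbounded in $\lambda$). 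Using $\lambda$-Knaster-ness of $P'$, I would then extract $I\in[\lambda]^\lambda$ with $\{p_\alpha:\alpha\in I\}$ pairwise compatible. Then $\{\{\xi_\alpha\}:\alpha\in I\}$ is a $\lambda$-sized collection of pairwise disjoint singletons in $V^{P_\precal}$, so Lemma~\ref{forcecolor}(3) yields $\alpha\ne\beta$ in $I$ with $c(\xi_\alpha,\xi_\beta)=0$. A common extension $q\le p_\alpha,p_\beta$ then forces $\xi_\alpha,\xi_\beta\in\dot H$, contradicting the 1-homogeneity of $\dot H$.

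The only delicate point is the interplay between the combinatorial fact about $c$ recorded in Lemma~\ref{forcecolor}(3), which is a statement about $V^{P_\precal}$, and the hypothetical filter $G$ which lives in the further extension $V^{P_\precal\ast P'}$. The $\lambda$-Knaster hypothesis on $P'$ is exactly the tool that lets one reflect from $\dot H$ back to $V^{P_\precal}$ a $\lambda$-sized family of ordinals forced into $\dot H$ by pairwise compatible conditions; once in $V^{P_\precal}$, the ground-model colouring statement can be applied to produce the forbidden pair.
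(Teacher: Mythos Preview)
Your proof is correct and follows essentially the same approach as the paper: show that $Q_\precal$ retains precaliber $\aleph_1$ via its representation as a $\lambda$-chain of centered sets, and then use the $\lambda$-Knaster property of $P'$ to extract a linked family of conditions forcing ordinals into the hypothetical 1-homogeneous set, so that Lemma~\ref{forcecolor}(3) yields a $0$-edge and hence a contradiction. Your write-up is slightly more explicit about cardinal preservation (ccc for $\aleph_1$, $\lambda$-cc for the regularity of $\lambda$) and about passing from a filter meeting all $D_\alpha$ to an unbounded 1-homogeneous set, but the argument is the same.
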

\begin{proof}
   We claim that in $V^{P_{\precal}\ast P'}$,
   $Q_{\precal}$ still has precaliber $\aleph_1$, and there is no filter meeting each open dense subset $D_\alpha\subseteq Q_{\precal}$ for $\alpha<\lambda$.

   Precaliber follows from Lemma~\ref{colorposet}(\ref{item:a}).
   So we have to show that $\lambda$ has no 1-homogeneous set (w.r.t.\ $c$) of size $\lambda$ in $V^{P_{\precal}\ast P'}$.

   Work in $V^{P_{\precal}}$ and assume that $\dot{A}$ is a $P'$-name and $p\in P'$ forces that $\dot{A}$ is in $[\lambda]^{\lambda}$. By recursion, find $A'\in[\lambda]^{\lambda}$ and $p_\zeta\leq p$ for each $\zeta\in A'$ such that $p_\zeta\Vdash \zeta\in\dot{A}$. Since $P'$ is $\lambda$-Knaster, we may assume that $\{p_\zeta : \zeta\in A'\}$ is linked. By Lemma~ \ref{forcecolor}(3), there are $\zeta\ne \zeta'$ in $A'$ such that $c(\zeta,\zeta')=0$. So there is a condition $q$ stronger that both $p_\zeta$ and $p_{\zeta'}$ forcing that $\zeta,\zeta'\in\dot{A}$ and $c(\zeta,\zeta')=0$, i.e., that $\dot A$ is not 1-homogeneous.
\end{proof}

We can now add another case to %Corollary~\ref{11values}:
Lemma~\ref{lem:eleven}:
\begin{lemma}\label{pluslargemprec}
%Under Assumption~\ref{asm:main}, and for $\aleph_1<\lambda_{\precal}<\kappa$ %regular,
%\item[(6)] or $\lambda_{\precal}:=\kappa=\addN$
%\end{enumerate}
For $\aleph_1\le\lambda\leq\nu_1$ regular,
$\Ppre$
can be modified to some forcing $P'$
which still strongly witnesses the
Cicho\'n-characteristics, and additionally satisfies:
For all $k\in\omega$, $\mfrak(k\textnormal{-Knaster})=\aleph_1$;
$\mfrak(\textnormal{precaliber})=\lambda$;
and $\mathfrak p=\mathfrak b$.
\end{lemma}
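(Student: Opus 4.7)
The plan is to modify $\Ppre$ along the lines of Lemma~\ref{lem:eleven}, but using the precaliber-killer poset $P_{\precal,\lambda}$ of Definition~\ref{def:51} as an additional iterand, and bookkeeping through all small precaliber $\omega_1$ posets to get the matching lower bound $\mfrak(\textnormal{precaliber})\ge\lambda$. Concretely, I would partition the index set $\delta$ of $\Ppre$ into three cofinal pieces of the same size. The first piece keeps the original iterands of $\Ppre$, so as to inherit the strong Cicho\'n witnesses. The second piece runs, by bookkeeping, through all $P$-names of $\sigma$-centered posets of size $<\nu_3$, as in Lemma~\ref{lem:eleven}, forcing $\pfrak\ge\nu_3=\bfrak$. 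The third piece places $P$-names for $P_{\precal,\lambda}$ cofinally often, interleaved (again by bookkeeping) with all precaliber $\omega_1$ posets of size $<\lambda$ that appear along the iteration.

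Once the construction is in place, the required properties are verified as follows. By Lemma~\ref{forcecolor}, every iterand is precaliber $\omega_1$ (and in fact precaliber $\mu$ for all uncountable regular $\mu$), so the whole $P'$ is $(\omega_1,k+1)$-Knaster for every $k\ge 1$ by Lemma~\ref{lem:suslin}(\ref{item:lekwjt3}). Since the initial $\lambda_\infty$ iterands are Cohen forcings (as in Lemma~\ref{lem:oldleft}), Lemma~\ref{lem:blamain} applied with parameter $\aleph_1$ yields $\mfrak(k\textnormal{-Knaster})=\aleph_1$ for every $k$. For the precaliber number, the bookkeeping of small precaliber $\omega_1$ posets gives $\mfrak(\textnormal{precaliber})\ge\lambda$ in the standard way; the upper bound $\le\lambda$ follows from Lemma~\ref{colorposet} together with Lemma~\ref{lem:blubb43}, applied to the tail of the iteration after the last $P_{\precal,\lambda}$-step (which tail is $\lambda$-Knaster by Lemma~\ref{lem:suslin}(\ref{item:lekwjt4}), since it is a f.s.\ iteration of iterands that are either $\sigma$-centered or of size $<\lambda$). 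Finally, $\pfrak=\bfrak$ is inherited from the $\sigma$-centered bookkeeping piece exactly as in the proof of Lemma~\ref{lem:eleven}.

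The main obstacle is preservation of the strong Cicho\'n witnesses from $\Ppre$: the poset $P_{\precal,\lambda}$ has size $\lambda$, so it is not directly covered by Claim~\ref{claim:oldtonew} when $\lambda=\nu_1$. One has to verify that replacing ``$\sigma$-centered of size $<\nu_3$'' by ``precaliber $\omega_1$ of size $\le\lambda$'' in Claim~\ref{claim:oldtonew} still works. For the $\COB$ side this is immediate, since $P_{\precal,\lambda}$ is $\mu$-cc for every regular $\mu$. For the $\LCU$ side one has to inspect the goodness arguments discussed after Claim~\ref{claim:oldtonew}: since $P_{\precal,\lambda}$ is precaliber $\omega_1$, it is good for the relations attached to $\addN$ and $\covN$, and it is $(\nu_3,2)$-Knaster so it does not interfere with the $\LCU$ witness for $\bfrak$. (If one prefers to stay strictly inside the statement of Claim~\ref{claim:oldtonew}, the argument goes through unchanged for $\lambda<\nu_1$, and the boundary case $\lambda=\nu_1$ is a routine extension.)
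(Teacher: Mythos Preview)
Your proposal contains an internal inconsistency: you place copies of $P_{\precal,\lambda}$ \emph{cofinally often} in the third piece, but then argue about ``the tail of the iteration after the last $P_{\precal,\lambda}$-step''. If the copies are cofinal there is no such last step, and your description of the tail (``iterands that are either $\sigma$-centered or of size $<\lambda$'') is wrong since every tail contains further copies of $P_{\precal,\lambda}$, which has size $\lambda$. The paper avoids this entirely by using $P_{\precal,\lambda}$ \emph{once}, as the very first iterand; the remainder of the iteration then consists only of precaliber posets of size $<\lambda$, $\sigma$-centered posets of size $<\nu_3$, and the original $(\sigma,k)$-linked iterands, so it is $\lambda$-Knaster by Lemma~\ref{lem:suslin}(\ref{item:lekwjt4}) and Lemma~\ref{lem:blubb43} applies directly. (Your version is salvageable: since $P_{\precal,\lambda}$ itself has precaliber $\lambda$ by Lemma~\ref{forcecolor}, every iterand is $(\lambda,2)$-Knaster and the tail after \emph{any} copy is $\lambda$-Knaster; but you do not make this observation, and repeating $P_{\precal,\lambda}$ is in any case unnecessary.)

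Your handling of the preservation of strong witnesses is also problematic. You assert that $P_{\precal,\lambda}$ is ``good for the relations attached to $\addN$ and $\covN$'' because it has precaliber $\omega_1$; this is not a standard goodness result (the discussion after Claim~\ref{claim:oldtonew} invokes $\sigma$-centeredness, not precaliber), and I do not see how to prove it. The correct argument is simply that $|P_{\precal,\lambda}|=\lambda<\nu_1$, which places it under the first bullet of Claim~\ref{claim:oldtonew}. This is exactly what the paper does. The boundary case $\lambda=\nu_1$ is indeed not literally covered by Claim~\ref{claim:oldtonew}, but note that in the intended application (Lemma~\ref{lem:oldnew} and Theorem~\ref{thm:main}) one always has $\lambda\le\kappa\le\lambda_\addN<\lambda_\infty<\nu_1$, so this case does not arise.
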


\begin{proof}
The case $\lambda=\aleph_1$ was already dealt with in the previous section, so we assume $\lambda>\aleph_1$.

We modify $\Ppre$ as follows:
We start with the forcing $P_{\precal,\lambda}$.
From then on, use (by bookkeeping) all
precaliber forcings of size ${<}\lambda$,
all $\sigma$-centered ones of size ${<}\nu_3$, the value for $\bfrak$
(and in between we use all the iterands required for the original construction).
So each new iterand either
has precaliber $\aleph_1$ and is of size ${<}\lambda$, or is $(\sigma,k)$-linked for any $k\geq2$.
Therefore, the limits are $k+1$-Knaster (for any $k$).
Accordingly, the limit forces that each $k$-Knaster number is $\aleph_1$.

Also, each iterand is either of size ${<}\lambda$ or
$\sigma$-linked; so the limit is $\lambda$-Knaster, and
by Lemma~\ref{lem:blubb43} it forces that the precaliber number is
$\le\lambda$; our bookkeeping gives
$\ge\lambda$. And, as before, we get $\pfrak\ge\nu_3$ by bookkeeping.
\end{proof}

\section{Dealing with \texorpdfstring{$\mathfrak{h}$}{h}}\label{sec:h}

The following is a very useful tool to deal with $\gfrak$.

\begin{lemma}[{Blass~\cite[Thm.~2]{BlassSP}, see also Brendle~\cite[Lem.~1.17]{BrHejnice}}]\label{lemmagfrak}
   Let $\nu$ be an uncountable regular cardinal and let $( V_\alpha)_{\alpha\leq\nu}$ be an increasing sequence of transitive models of ZFC such that
   \begin{enumerate}[(i)]
      \item $\omega^\omega\cap(V_{\alpha+1}\smallsetminus V_{\alpha})\neq\emptyset$,
      \item $(\omega^\omega\cap V_\alpha)_{\alpha<\nu}\in V_\nu$, and
      \item $\omega^\omega\cap V_\nu=\bigcup_{\alpha<\nu}\omega^\omega\cap V_\alpha$.
   \end{enumerate}
   Then, in $V_\nu$, $\mathfrak g\leq\nu$.
\end{lemma}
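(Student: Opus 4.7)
The goal is to construct, inside $V_\nu$, a family $(D_\alpha)_{\alpha<\nu}$ of groupwise dense subsets of $[\omega]^{\aleph_0}$ with $\bigcap_{\alpha<\nu}D_\alpha=\emptyset$; this immediately yields $\mathfrak g\le\nu$. Using (i) and (ii), I would first pick in $V_\nu$ a sequence $(f_\alpha)_{\alpha<\nu}$ with each $f_\alpha\in\omega^\omega\cap(V_{\alpha+1}\setminus V_\alpha)$, replaced (WLOG, via a Borel recoding) by a strictly increasing function. Let $I^\alpha_n:=[f_\alpha(n),f_\alpha(n+1))$ and set
\[
    D_\alpha:=\{a\in[\omega]^{\aleph_0}:a\cap I^\alpha_n=\emptyset\text{ for infinitely many }n\}.
\]
Downward closure under $\subseteq^*$ is immediate: any interval $I^\alpha_n$ missed by $a$ is also missed by any $a'\subseteq^* a$ (with only finitely many exceptions). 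Note that by (ii) the whole family $(D_\alpha)_{\alpha<\nu}$ is an element of $V_\nu$.

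For groupwise density in $V_\nu$: given any interval partition $(J_m)\in V_\nu$, I would recursively pick $a_1<a_2<\cdots$ so that between $J_{a_k}$ and $J_{a_{k+1}}$ there is at least one complete interval $I^\alpha_{n_k}$ (possible because the intervals of any interval partition tend to infinity). Then $c:=\bigcup_{m\in a}J_m$ misses every such $I^\alpha_{n_k}$ and hence lies in $D_\alpha$.

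For emptiness of $\bigcap_{\alpha<\nu}D_\alpha$: given $a\in V_\nu\cap[\omega]^{\aleph_0}$, (iii) provides $\beta<\nu$ with $a\in V_\beta$, and I would show that some $\alpha$ (naturally $\alpha\ge\beta$) satisfies $a\notin D_\alpha$, i.e., $a\cap I^\alpha_n\ne\emptyset$ for cofinitely many $n$. The latter holds once $f_\alpha(n+1)-f_\alpha(n)$ eventually exceeds the gaps of $a$ in the relevant range, which is in turn guaranteed if the chosen sequence $(f_\alpha)_{\alpha<\nu}$ is, in $V_\nu$, $\le^*$-unbounded and sufficiently spread out over $\omega^\omega\cap V_\nu$.

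The main obstacle is this last step: assumption (i) only promises \emph{some} new real at each stage, not a dominating one (think of iterated Cohen extensions), so the naive choice of $f_\alpha$ may fail to ``catch'' the gaps of all $a\in V_\alpha$. The resolution is to exploit (ii) and the uncountable regularity of $\nu$ to globally coordinate the choice of the $(f_\alpha)_{\alpha<\nu}$ inside $V_\nu$: working in $V_\nu$ with the increasing chain $(V_\alpha\cap\omega^\omega)_{\alpha<\nu}$ of length $\nu$, one can reshuffle bookkeeping along $\nu$ so that each $a\in V_\beta$ has some subsequent $f_\alpha$ (with $\alpha\ge\beta$) whose induced interval partition is coarser than the gap-function of $a$ — which is always possible since $V_\nu$ accumulates cofinally many new reals past every stage $\beta$. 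This yields the desired $a\notin D_\alpha$ and completes the argument.
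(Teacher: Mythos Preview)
The paper does not give a proof of this lemma (it is cited from Blass and Brendle), so there is no in-paper argument to compare against. On its own merits, your proposal has a genuine gap in the last step.

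The arguments for downward closure and for groupwise density of each $D_\alpha$ are fine. The problem is emptiness of $\bigcap_\alpha D_\alpha$. With your definition, $a\notin D_\alpha$ says exactly that the interval partition induced by $f_\alpha$ is eventually coarser than the one induced by $a$. A family $(f_\alpha)_{\alpha<\nu}$ with the property that \emph{every} $a\in[\omega]^{\aleph_0}\cap V_\nu$ is caught by some $f_\alpha$ is precisely a cofinal family in the eventual-coarsening order on interval partitions, and the minimal size of such a family is $\mathfrak d$. So your construction can succeed only when $\mathfrak d^{V_\nu}\le\nu$, in which case the conclusion $\mathfrak g\le\nu$ already follows trivially from $\mathfrak g\le\mathfrak d$. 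The hypotheses (i)--(iii) do not bound $\mathfrak d$: take any ground model with $\mathfrak d$ much larger than $\nu$ and iterate Cohen forcing $\nu$ times; then (i)--(iii) hold but $\mathfrak d^{V_\nu}>\nu$. Your ``bookkeeping'' cannot help here, since there may be far more than $\nu$ reals to catch and each of your $D_\alpha$ excludes only those $a$ compatible with the single $f_\alpha$.

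The actual proof (in the cited references) defines $D_\alpha$ using \emph{all} of $\omega^\omega\cap V_\alpha$ together with one fixed new real $x_\alpha\in V_{\alpha+1}\setminus V_\alpha$, arranged so that every $a\in V_\alpha$ automatically lies outside $D_\alpha$; then (iii) gives $\bigcap_\alpha D_\alpha=\emptyset$ with no enumeration of the reals required. The point is that the new real $x_\alpha$ is used not to produce a single fast-growing function, but to diagonalize against all codings available in $V_\alpha$.
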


This result gives an alternative proof of the well-known:

\begin{corollary}
   $\mathfrak{g}\leq\cof(\mathfrak{c})$.\footnote{A more elementary proof can be found in \cite[Thm.8.6, Cor. 8.7]{Blass}}
\end{corollary}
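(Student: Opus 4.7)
The plan is to apply Lemma~\ref{lemmagfrak} with $\nu := \cof(\cfrak)$. If $\cfrak$ is regular, then $\cof(\cfrak) = \cfrak \geq \gfrak$ trivially, so I assume $\cfrak$ is singular and $\nu$ is an uncountable regular cardinal with $\nu < \cfrak$. Fix an increasing sequence of cardinals $(\lambda_\alpha)_{\alpha < \nu}$ cofinal in $\cfrak$ with $\lambda_0 \geq \aleph_1$, and an enumeration $\omega^\omega = \{f_\xi : \xi < \cfrak\}$.

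For a sufficiently large regular $\chi$, I would build in $V$ a continuous elementary chain $(N_\alpha)_{\alpha \leq \nu}$ of submodels of $H(\chi)$ with $|N_\alpha| = \lambda_\alpha$ for $\alpha < \nu$, $\lambda_\alpha \subseteq N_\alpha$, and $\{f_\xi : \xi < \lambda_\alpha\} \subseteq N_\alpha$; additionally I would arrange that $(N_\alpha)_{\alpha<\nu} \in N_\nu$ by a closing-off step at the top. Let $\pi_\nu \colon N_\nu \to V_\nu$ be the Mostowski collapse, and for $\alpha < \nu$ let $V_\alpha$ be the transitive collapse of $N_\alpha$ (equivalently, the transitive set coded inside $V_\nu$ by $\pi_\nu(N_\alpha)$).

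Checking the three hypotheses will then be almost automatic: reals are fixed by any Mostowski collapse, so $\omega^\omega \cap V_\alpha = \omega^\omega \cap N_\alpha$; item~(iii) follows because $\omega^\omega \subseteq N_\nu = \bigcup_{\alpha < \nu} N_\alpha$ by our cofinal enumeration; item~(i) holds because $|\omega^\omega \cap N_\alpha| \leq \lambda_\alpha < \lambda_{\alpha+1}$ while all $f_\xi$ with $\xi < \lambda_{\alpha+1}$ lie in $N_{\alpha+1}$ by construction; and item~(ii) follows because $(N_\alpha)_{\alpha<\nu}\in N_\nu$, so after collapse $(\omega^\omega \cap V_\alpha)_{\alpha<\nu}$ lies in $V_\nu$. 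Applying Lemma~\ref{lemmagfrak} yields $V_\nu \models \gfrak \leq \nu$; since $\omega^\omega \subseteq V_\nu$ and groupwise density is absolute between structures with the same reals (interval partitions are reals), any family in $V_\nu$ witnessing $\gfrak\leq\nu$ there also witnesses $\gfrak \leq \nu = \cof(\cfrak)$ in $V$.

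The part I expect to require the most care is clarifying the sense in which the $V_\alpha$'s form an increasing sequence of ``transitive models of ZFC'': Mostowski collapses of different elementary submodels do not literally nest, because common parameters like $\cfrak$ get collapsed differently in $N_\alpha$ and in $N_\nu$. One workaround is to define the $V_\alpha$'s internally to $V_\nu$ as the internal transitive collapses of the copies $\pi_\nu(N_\alpha)$, which makes nesting and the parameter condition (ii) automatic and reduces the ``ZFC'' requirement to whatever fragment is actually used in the proof of Lemma~\ref{lemmagfrak}. A cleaner alternative sidesteps collapses entirely: pick increasing $A_\alpha \subseteq \omega^\omega$ with $|A_\alpha| = \lambda_\alpha$ and $\bigcup_\alpha A_\alpha = \omega^\omega$, and apply the lemma to inner models $L[A_\alpha]$ (truncated to suitable set-sized stages so that condition~(ii) goes through), which nest by construction.
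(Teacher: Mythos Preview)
Your overall strategy via Lemma~\ref{lemmagfrak} is right, but the execution is more convoluted than necessary, and your first ``workaround'' does not actually work: defining the $V_\alpha$ as internal transitive collapses of $\pi_\nu(N_\alpha)$ inside $V_\nu$ yields, since $V_\nu$ is transitive, exactly the external Mostowski collapses $M_\alpha$ of the $N_\alpha$, and these do \emph{not} nest. Concretely, any ordinal in $N_\alpha$ above $\lambda_\alpha$ (e.g.\ $\cfrak$ itself) is collapsed to different values in $M_\alpha$ and in $M_\beta$ for $\alpha<\beta$, so $M_\alpha\not\subseteq M_\beta$. Your second alternative with $L[A_\alpha]$ does work, so the proof can be salvaged.

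The paper avoids both the nesting headache and the absoluteness transfer at the end with two simplifications. First, it sets $V_\nu:=V$; this makes condition~(ii) of the lemma trivial and delivers $\gfrak\le\nu$ in $V$ directly. Second, it builds the $V_\alpha$ for $\alpha<\nu$ by recursion rather than by collapsing a pre-built chain: at stage $\alpha$ one takes the collapse of an elementary submodel of size $|\mu_\alpha|$ (for a cofinal sequence $(\mu_\alpha)_{\alpha<\nu}$ in $\cfrak$) that contains $\mu_\alpha\cup\{\mu_\alpha\}$, enough reals, and each previous $V_\gamma$ as an element; since those $V_\gamma$ are transitive and of size ${<}\mu_\alpha$, they are then subsets of the model and are fixed pointwise by the collapse. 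Nesting is immediate, and no case distinction on the regularity of $\cfrak$ is needed.
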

\begin{proof}
    Put $\nu:=\cof(\cfrak)$ and let $(\mu_\alpha)_{\alpha<\nu}$ be a cofinal increasing sequence in $\cfrak$ formed by limit ordinals. By recursion, we can find an increasing sequence $( V_\alpha)_{\alpha<\nu}$ of transitive models of (a large enough fragment of) ZFC such that (i) of Lemma~\ref{lemmagfrak} is satisfied, $\mu_\alpha\in V_\alpha$, $|V_\alpha|=|\mu_\alpha|$ and $\bigcup_{\alpha<\nu}\omega^\omega\cap V_\alpha=\omega^\omega$. Set $V_\nu:=V$, so Lemma~\ref{lemmagfrak} applies, i.e., $\mathfrak g\leq\nu=\cof(\cfrak)$.
\end{proof}

The following lemma is our main tool to modify the values of $\gfrak$ and $\cfrak$ via a complete subposet of some forcing, while preserving \mlike\ and Blass-uniform values from the original poset. This is a direct consequence of Lemmas~\ref{lem:blasssub} and~\ref{lemmagfrak} and
Corollary~\ref{cor:trivial}.    As we are only interested in finitely many characteristics,
the index sets $I_1$, $I_2$, $J$ and $K$  will  be finite when we apply the lemma.

\begin{lemma}\label{lem:subforcing}
Assume the following:
\begin{enumerate}[(1)]
    \item $\aleph_1\le\kappa\le\nu\le \mu$,  where $\kappa$ and $\nu$ are regular
and $\mu=\mu^{<\kappa}\geq\nu$,
    \item $P$ is a $\kappa$-cc poset forcing $\cfrak>\mu$.  %%%%OLD: $\mu>\kappa$ is a cardinal, $\cof(\mu)=\nu$ and $\xi^{<\kappa}<\mu$ for all $\xi<\mu$.
\item For some Borel relations $R^1_i$ ($i\in I_1$) on $\omega^\omega$ and some regular $\lambda^1_i\leq\mu$:
$P$ forces $\LCU_{R^1_i}(\lambda^1_i)$

\item For some Borel relations $R^2_i$ ($i\in I_2$) on $\omega^\omega$, $\lambda^2_i\leq\mu$ regular and a cardinal $\vartheta^2_i\leq\mu$: $P$ forces  $\COB_{R^2_i}(\lambda^2_i,\vartheta^2_i)$.

\item For some \mlike\ characteristics $\mathfrak y_j$ ($j\in J$)
and $\lambda_j<\kappa$:
$P\Vdash \mathfrak y_j=\lambda_j$.

\item For some \mlike\ characteristics $\mathfrak y'_k$ ($k\in K$): $P\Vdash \mathfrak{y}'_k\geq\kappa$.

\item $|I_1\cup I_2\cup J\cup K|\leq\mu$.
\end{enumerate}
Then there is
a complete subforcing $P'$ of $P$
of size
$\mu$
forcing
\begin{enumerate}[(a)]
    \item $\mathfrak y_j=\lambda_j$, $\mathfrak{y}'_k\geq\kappa$, $\LCU_{R^1_i}(\lambda^1_i)$ and $\COB_{R^2_{i'}}(\lambda^2_{i'},\vartheta^2_{i'})$ for all $i\in I_1$, $i'\in I_2$, $j\in J$ and $k\in K$;
    \item $\cfrak=\mu$ and $\gfrak\leq\nu$.
\end{enumerate}
%Even more, if $P$ forces that $\cfrak\geq\mu$, then $P'$ forces $\mathfrak{g}\leq\nu$ and $\mathfrak{c}=\mu$.
\end{lemma}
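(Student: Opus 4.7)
The plan is to build $P'$ as $P\cap N$ for a carefully constructed elementary submodel $N\preceq H(\chi)$ (with $\chi$ regular and large). To simultaneously get $\gfrak\le\nu$, we take $N=\bigcup_{\alpha<\nu}N_\alpha$ as a continuous increasing union of ${<}\kappa$-closed elementary submodels of $H(\chi)$ of size $\mu$, with $\mu\cup\{\mu\}\subseteq N_0$ and with $N_0$ containing $P$, the names of all LCU/COB witnesses, the defining parameters of each $\mathfrak y_j$, $\mathfrak y'_k$, and everything else mentioned. The existence of such a chain uses $\mu^{<\kappa}=\mu$ in the standard way (close each $N_\alpha$ under ${<}\kappa$-sequences by iterating $\kappa$ times; at limits of cofinality $<\kappa$ one again closes). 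At successor stages, we additionally arrange that $N_{\alpha+1}$ contains a $P$-name $\dot s_\alpha$ for a real forced not to lie in $V[G\cap(P\cap N_\alpha)]$; such a $\dot s_\alpha$ exists since $|P\cap N_\alpha|\le\mu$, so the intermediate extension has at most $\mu^{\aleph_0}\le\mu^{<\kappa}=\mu$ reals, whereas $P\Vdash\cfrak>\mu$. Taking $\dot s_\alpha$ as a nice name and using ${<}\kappa$-closure of $N_{\alpha+1}$ together with $\kappa\subseteq N_{\alpha+1}$, all of the antichains of $\dot s_\alpha$ lie inside $N_{\alpha+1}$, so $\dot s_\alpha$ is in fact a $(P\cap N_{\alpha+1})$-name.

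Once the chain is built, the standard argument (using $\kappa$-cc of $P$ and ${<}\kappa$-closure of $N$, and similarly of each $N_\alpha$) shows that each $P\cap N_\alpha$ and $P'=P\cap N$ is a complete subposet of $P$, and that $P\cap N_\alpha$ is a complete subposet of $P'$. The properties in (a) then drop out: Lemma~\ref{lem:blasssub} transfers the LCU- and COB-witnesses (which lie in $N_0\subseteq N$) to $P'$; Corollary~\ref{cor:trivial}(iii), applied with $M=N$, gives $P'\Vdash\mathfrak y_j=\lambda_j$ (since $\lambda_j<\kappa$ so its Mostowski image under $\pi$ is just $\lambda_j$) and $P'\Vdash \mathfrak y'_k\ge\kappa$.

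For (b), the upper bound $\cfrak\le\mu$ in $V^{P'}$ is a counting argument: $|P'|\le\mu$ and $\mu^{\aleph_0}\le\mu^{<\kappa}=\mu$. The lower bound $\cfrak\ge\mu$ uses $\mu\subseteq N_0$: by elementarity $N_0$ contains $\mu$ many $P$-names of pairwise distinct reals (using $P\Vdash\cfrak>\mu$), and by the same absorption argument as for $\dot s_\alpha$ these are $(P\cap N_0)$-names, giving $\mu$ distinct reals in $V^{P'}$. For $\gfrak\le\nu$ we apply Lemma~\ref{lemmagfrak} to the chain $V_\alpha:=V[G\cap(P\cap N_\alpha)]$ for $\alpha\le\nu$, where $V_\nu:=V^{P'}$. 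Condition~(i) holds because $\dot s_\alpha[G\cap(P\cap N_{\alpha+1})]\in V_{\alpha+1}\setminus V_\alpha$. Condition~(ii) is automatic: the sequence $(P\cap N_\alpha)_{\alpha<\nu}$ lies in $V\subseteq V_\nu$, and each $V_\alpha$'s set of reals is computable inside $V_\nu$ from this sequence together with the $P'$-generic. Condition~(iii), $\omega^\omega\cap V_\nu=\bigcup_{\alpha<\nu}\omega^\omega\cap V_\alpha$, is the key technical point: any nice $P'$-name for a real comprises countably many antichains, each of size ${<}\kappa$ and hence a ${<}\nu$-subset of $\bigcup_{\alpha<\nu}(P\cap N_\alpha)$, which by regularity of $\nu$ lies in some $P\cap N_{\alpha_n}$; then $\alpha^*:=\sup_n\alpha_n<\nu$ (using $\kappa\le\nu$ regular and $\kappa>\aleph_0$), so the name is a $(P\cap N_{\alpha^*})$-name.

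The main obstacle is coordinating the three demands on the chain: continuity, stepwise ${<}\kappa$-closure, and the presence of a fresh real-name in each successor—plus the need that these fresh names actually become $(P\cap N_{\alpha+1})$-names rather than merely $P$-names inside $N_{\alpha+1}$. Both issues are handled by combining $\mu^{<\kappa}=\mu$ (for the size-preserving $\kappa$-long closures) with the observation that nice names have antichains of size ${<}\kappa$, which are automatically absorbed by ${<}\kappa$-closed submodels containing $\kappa$. The remaining arguments are direct invocations of Lemmas~\ref{lem:blasssub} and~\ref{lemmagfrak} and Corollary~\ref{cor:trivial}.
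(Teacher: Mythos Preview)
Your proof is correct and follows essentially the same approach as the paper: build a $\nu$-chain of ${<}\kappa$-closed elementary submodels of size $\mu$, set $P'=P\cap N$ for the union $N$, invoke Lemma~\ref{lem:blasssub} and Corollary~\ref{cor:trivial}(iii) for part~(a), and apply Lemma~\ref{lemmagfrak} to the intermediate extensions for $\gfrak\le\nu$. One small expository wrinkle: you call the chain ``continuous'' but then say you re-close at limits of cofinality ${<}\kappa$; these are in tension, and in fact the paper does not take continuous unions but simply requires each $M_\alpha$ to be ${<}\kappa$-closed---your argument for condition~(iii) of Lemma~\ref{lemmagfrak} only uses $N=\bigcup_{\alpha<\nu}N_\alpha$ and regularity of $\nu\ge\kappa$, not continuity, so this does not affect correctness.
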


\begin{proof}
Construct an increasing sequence of elementary submodels $(M_\alpha:\alpha<\nu)$ of some $(H(\chi),{\in}) $ for some sufficiently large $\chi$, where each $M_\alpha$ is $\mathord<\kappa$-closed with cardinality $\mu$,
in a way that $M:=M_\nu=\bigcup_{\alpha<\nu}M_\alpha$ satisfies:
\begin{enumerate}[(i)]
    \item $\mu\cup\{\mu\}\subseteq M_0$,
    \item $I_1\cup I_2\cup J\cup K\subseteq M_0$,
    \item $M_0$ contains all the definitions of the characteristics we use,
    \item $M_0$ contains all the $P$-names of witnesses of each $\LCU_{R^1_i}(\lambda^1_i)$ ($i\in I_1$),
    \item for each $i\in I_2$ and some chosen name $(\dot{\trianglelefteq}^i,\dot{\bar{g}}^i)$ of a witness of $\COB_{R^2_i}(\lambda^2_i,\vartheta^2_i)$: for all $(s,t)\in\vartheta^2_i\times\vartheta^2_i$, $\dot{g}^i_s\in M_0$ and the maximal antichain deciding ``$s\dot{\trianglelefteq}^i t$'' belongs to $M_0$,
    \item $M_{\alpha+1}$ contains $P$-names of reals that are forced not to be in the $P\cap M_\alpha$-extension (this is because $P$ forces $\cfrak>\mu$).
\end{enumerate}
Note that $M$ is also a ${<}\kappa$-closed elementary submodel of $H(\chi)$ of size $\mu$, and that $P_\alpha:=P\cap M_\alpha$ (for $\alpha\leq\nu$) is a complete subposet of $P$. Put $P':=P_\nu$.

According to Corollary~\ref{cor:trivial}, in the $P'$-extension,
each \mlike\ characteristic below $\kappa$ is preserved (as in the $P$-extension) and for the others ``$\mathfrak y'_k\ge \kappa$'' is preserved;
and according to Lemma~\ref{lem:blasssub} the
$\LCU$ and $\COB$ statements are preserved as well. This shows (a).

It is clear that $P_\alpha$ is a complete subposet of $P_\beta$ for every $\alpha<\beta\leq\nu$, and that $P'$ is the direct limit of the $P_\alpha$. Therefore, if $V'$ denotes the $P'$-extension and $V_\alpha$ denotes the $P_\alpha$-intermediate extensions, then $\omega^\omega\cap V_{\alpha+1}\smallsetminus V_\alpha\neq\emptyset$ (by (vi)) and $\omega^\omega\cap V'\subseteq\bigcup_{\alpha<\nu}V_\alpha$. Hence, by Lemma~\ref{lemmagfrak}, $V'\models\mathfrak{g}\leq\nu$. Clearly, $V'\models\cfrak=\mu$.
\end{proof}

%\begin{corollary}
%   Theorem~\ref{oldba} (either version) still holds when $\lambda_9$ is an $\aleph_1$-inaccessible limit cardinal with uncountable cofinality and $\lambda_9>\lambda_8$. In particular, if $\cof(\lambda_9)=\omega_1$ then it is forced in addition that $\mathfrak g=\aleph_1$.
%\end{corollary}
%\begin{proof}
%    Let $\nu:=\cof(\lambda_9)$. Since $\lambda_9$ is $\aleph_1$-inaccessible, we can find an increasing sequence $(\mu_\alpha)_{\alpha<\nu}$ of uncountable cardinals, cofinal in $\lambda_9$, such that $\mu_0>\lambda_8$ and $\mu_\alpha^{\aleph_0}=\mu_\alpha$. Apply Lemma~\ref{lem:subforcing} to $\kappa=\aleph_1$ and to the poset $P$ obtained in Theorem~\ref{oldba} for $\theta:=\big(\lambda_9^{\kappa_9}\big)^+$ in the place of $\lambda_9$.
%\end{proof}

We are now ready to add $\hfrak=\gfrak=\pfrak$ to our characteristics:

\begin{lemma}\label{lem:twelve}
For $\aleph_1\le\lambda_\mfrak\leq\kappa\le\nu_1$ regular,
$\Ppre$
can be modified to some forcing $P'$
which still strongly witnesses the
Cicho\'n-characteristics, and additionally satisfies:
   \[
     \mfrak=\lambda_\mfrak \le \hfrak=\gfrak=\pfrak=\kappa
   \]
\end{lemma}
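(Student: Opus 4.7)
The plan is to compose two tools already at hand: Lemma~\ref{lem:eleven}, which controls $\mfrak$ while leaving $\pfrak$ large, followed by Lemma~\ref{lem:subforcing}, which will shrink $\gfrak$ to $\kappa$ via a complete subforcing without disturbing the other values we care about.

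First I would invoke Lemma~\ref{lem:eleven}: if $\lambda_\mfrak = \aleph_1$, use item~(\ref{item:hwfsm}); otherwise use item~(\ref{item:hwflgbla}) with the parameter $\lambda := \lambda_\mfrak$. The output is a ccc poset $P$ that strongly witnesses the Cicho\'n-characteristics with the prescribed values, forces $\mfrak = \lambda_\mfrak$, and forces $\pfrak = \bfrak = \nu_3$. Since $\kappa \le \nu_1 < \nu_3$, we have in particular $P \Vdash \pfrak \ge \kappa$.

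Next I would feed $P$ into Lemma~\ref{lem:subforcing} with $\nu := \kappa$ and $\mu := \lambda_\infty$ (which satisfies $\mu^{<\kappa} = \mu$ by Assumption~\ref{asm:main}). Take $I_1$ and $I_2$ to index the $\LCU$ and $\COB$ strong witnesses for the Cicho\'n-characteristics; put $\mfrak$ into $J$ with value $\lambda_\mfrak$ when $\lambda_\mfrak < \kappa$, or into $K$ (as a ``$\ge \kappa$'' declaration) in the boundary case $\lambda_\mfrak = \kappa$; and include $\pfrak$ in $K$. The resulting complete subposet $P' \le P$ still strongly witnesses the Cicho\'n-characteristics, still forces $\mfrak = \lambda_\mfrak$ (or at least $\mfrak \ge \kappa$ in the boundary case) and $\pfrak \ge \kappa$, and additionally forces $\gfrak \le \kappa$.

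To finish, the ZFC inequalities $\pfrak \le \hfrak \le \gfrak$ from~\eqref{eq:uqw523}, together with the bounds $\pfrak \ge \kappa$ and $\gfrak \le \kappa$, pin $\pfrak = \hfrak = \gfrak = \kappa$. In the boundary case $\lambda_\mfrak = \kappa$, the inequality $\mfrak \le \pfrak = \kappa$ together with the preserved $\mfrak \ge \kappa$ recovers $\mfrak = \kappa = \lambda_\mfrak$. The only point requiring real care is the interaction between the value of $\mfrak$ and the threshold $\kappa$, which is precisely why Lemma~\ref{lem:subforcing} distinguishes its ``below $\kappa$'' clause~($J$) from its ``at least $\kappa$'' clause~($K$); routing $\mfrak$ to the correct clause makes the boundary case go through without incident.
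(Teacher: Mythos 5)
Your plan follows the paper's own route (Lemma~\ref{lem:eleven}, then Lemma~\ref{lem:subforcing}), but there is a genuine gap in the choice of the parameter $\mu$.

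You set $\mu := \lambda_\infty$. However, $P$ is a modification of $\Ppre$, and the strong witnesses it carries for the Cicho\'n-characteristics are at the cardinals $\nu_1 < \nu_2 < \nu_3 < \nu_4 < \theta_\infty$, \emph{all} of which were chosen in step~(A) of the construction to be strictly above $\lambda_\infty$. So in hypotheses (3) and (4) of Lemma~\ref{lem:subforcing} you cannot satisfy $\lambda^1_i \le \mu$ and $\lambda^2_i, \vartheta^2_i \le \mu$ with $\mu = \lambda_\infty$: already $\nu_1 > \lambda_\infty$. The proof of Lemma~\ref{lem:subforcing} really uses these bounds (the names for all the $\lambda^1_i$-many $\dot f_\alpha$ and $\vartheta^2_i$-many $\dot g^i_s$ must fit into an elementary submodel $M_0$ of size $\mu$), so this is not a removable hypothesis. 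Consequently the lemma does not apply as you have invoked it, and the claim that $P'$ ``still strongly witnesses the Cicho\'n-characteristics'' does not follow.

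The paper fixes this by going the other way: it keeps $\mu$ large enough to dominate the witnesses, i.e.\ $\mu := \theta_\infty$, and to satisfy the remaining hypothesis $P \Vdash \cfrak > \mu$ it first \emph{inflates} the initial forcing, running the construction of Lemma~\ref{lem:eleven} with $\theta_\infty^+$ in place of $\theta_\infty$ so that $P$ forces $\cfrak = \theta_\infty^+ > \theta_\infty = \mu$. The output $P'$ then forces $\cfrak = \theta_\infty$ together with strong witnesses at $\nu_1,\dots,\nu_4,\theta_\infty$, which is exactly the form required to feed $P'$ into Lemma~\ref{lem:oldnew} in Corollary~\ref{cor:twelve}; the reduction to the target values $\lambda_\addN,\dots,\lambda_\infty$ happens only at that later stage, via the ${<}\kappa$-closed elementary submodel $N^*$. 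Your proposal, even if the parameter issue were ignored, would leave $P'$ with the wrong continuum ($\lambda_\infty$ rather than $\theta_\infty$) while the LCU/COB values would still sit at the $\nu_i$, so it would not mesh with the downstream application of Lemma~\ref{lem:oldnew} either. The rest of your argument --- the routing of $\mfrak$ to clause $J$ or $K$ depending on whether $\lambda_\mfrak < \kappa$ or $\lambda_\mfrak = \kappa$, and the use of $\mfrak\le\pfrak\le\hfrak\le\gfrak$ to pin down the values --- is correct and matches the paper.
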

In addition to
$\mfrak=\lambda_\mfrak$ we can get $\mfrak=\mfrak(\textnormal{precaliber})$,
which is case (3) of Lemma~\ref{lem:eleven};
and instead of $\mfrak=\lambda_\mfrak$ we can alternatively force
case (1) or (2) of Lemma~\ref{lem:eleven}, or the situation of Lemma~\ref{pluslargemprec}.
\begin{proof}
    We start with the (appropriate) %$\Pfin$ from Corollary~\ref{11values}
    %or Lemma~\ref{pluslargemprec}(5),
    %but for the ``inflated'' continuum $\lambda_\infty^+$ instead of %$\lambda_\infty$.
    $P$ from Lemma~\ref{lem:eleven} (or from Lemma~\ref{pluslargemprec});
    but for the ``inflated'' continuum $\theta_\infty^+$ instead of $\theta_\infty$.

    We then apply Lemma~\ref{lem:subforcing} for $\mu:=\theta_\infty$,
    and $\nu:=\kappa$. This gives a subforcing $P'$ which still forces:
    \begin{itemize}
        \item Strong witnesses
            for all the Cicho\'n-characteristics;
            \\
            as they fall under
            Lemma~\ref{lem:subforcing}(3,4).
        \item $\pfrak\ge\kappa$; an instance of Lemma~\ref{lem:subforcing}(6) as $P$ forces $\pfrak=\nu_3\ge\kappa$.
        \item $\gfrak\le\nu$; according to Lemma~\ref{lem:subforcing}(b).
        \\
        As ZFC proves $\pfrak\le\hfrak\le\gfrak$ and $\nu=\kappa$,
        this implies
        $\pfrak=\hfrak=\gfrak=\kappa$.
        \item If $\lambda_\mfrak<\kappa$, we get
            $\mfrak=\lambda_\mfrak<\kappa$ as instance of Lemma~\ref{lem:subforcing}(5).
        \item If $\lambda_\mfrak=\kappa$, we get $\mfrak\ge\kappa$ by
            Lemma~\ref{lem:subforcing}(6);
            \\
            but as $\mfrak\le\pfrak$ this also implies $\mfrak=\lambda_\mfrak$.
        \item Alternatively: The same argument for
        $\mfrak(\textnormal{precaliber})$ and/or $\mfrak(k\textnormal{-Knaster})$ instead of / in addition to $\mfrak$;
        as required by the desired case of Lemma~\ref{lem:eleven} or~\ref{pluslargemprec}.\qedhere
    \end{itemize}
\end{proof}

We can now get twelve different characteristics:
\begin{corollary}\label{cor:twelve}
   Under Assumption~\ref{asm:main},
   and for $\aleph_1\le \lambda_\mfrak\le\kappa$ regular,
   we can get a ccc poset $P''$ which forces, in addition to Theorem~\ref{thm:old},
   \[
     \mfrak=\lambda_\mfrak \le \hfrak=\gfrak=\pfrak=\kappa
   \]
\end{corollary}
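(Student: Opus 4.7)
My plan is to combine Lemma~\ref{lem:twelve} (which handles the MA and pseudo-intersection numbers) with Lemma~\ref{lem:oldnew} (which adjusts the Cicho\'n-values to the intended targets), exactly as in the Cicho\'n's Maximum construction. First, I would pick inflated parameters $\nu_1 < \nu_2 < \nu_3 < \nu_4 < \theta_\infty$ strictly above $\lambda_\infty$ as in step (A) of Subsection~\ref{subsec:blass}, and apply Lemma~\ref{lem:twelve} to produce a ccc poset $P'$ forcing strong witnesses for $\addN = \nu_1, \ldots, \cfrak = \theta_\infty$ as well as $\mfrak = \lambda_\mfrak \le \hfrak = \gfrak = \pfrak = \kappa$. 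The hypothesis $\lambda_\mfrak \le \kappa \le \nu_1$ of Lemma~\ref{lem:twelve} is automatic from Assumption~\ref{asm:main}, since $\kappa \le \lambda_\infty < \nu_1$. Because $P'$ is ccc and strongly witnesses the left-hand-side values, it is a legitimate candidate for $\Ppre$ in the hypothesis of Lemma~\ref{lem:oldnew}, so I can apply that lemma to obtain a ${<}\kappa$-closed elementary submodel $N^* \prec H(\chi)$ such that $P'' := P' \cap N^*$ is a complete subforcing of $P'$ forcing the actual Cicho\'n targets of Theorem~\ref{thm:old}.

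All that remains is to verify that $P''$ inherits the values $P'$ forces for $\mfrak, \pfrak, \hfrak, \gfrak$. For this I would invoke Corollary~\ref{cor:trivial} in case~(iii), with $P \leftrightarrow P'$, $P^+ \leftrightarrow P''$, and $M \leftrightarrow N^*$. Since $\pfrak$ is \plike\ (hence both \mlike\ and \hlike) and $P' \Vdash \pfrak = \kappa$, clause~(d) of the corollary gives $P'' \Vdash \pfrak = \kappa$. For $\hfrak$ and $\gfrak$, which are \hlike: because $N^*$ is ${<}\kappa$-closed we have $\kappa \subseteq N^*$ and thus $|\kappa \cap N^*| = \kappa$, so clause~(c) yields $P'' \Vdash \hfrak, \gfrak \le \kappa$; combined with the ZFC chain $\pfrak \le \hfrak \le \gfrak$, this forces $P'' \Vdash \hfrak = \gfrak = \kappa$. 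Finally, $\mfrak$ is \mlike: if $\lambda_\mfrak < \kappa$, clause~(b) gives $P'' \Vdash \mfrak = \lambda_\mfrak$; if $\lambda_\mfrak = \kappa$, clause~(a) gives $P'' \Vdash \mfrak \ge \kappa$, and since $\mfrak \le \pfrak = \kappa$ is provable in ZFC, equality holds.

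The main issue to check carefully is that $P'$ does meet the hypotheses of Lemma~\ref{lem:oldnew}---i.e., that it is genuinely ccc (so in particular $\kappa$-cc) and forces strong witnesses for the inflated Cicho\'n-characteristics. This is handled by the construction in Lemma~\ref{lem:twelve} (carried out with $\theta_\infty^+$-inflated continuum, as in its proof), together with Claim~\ref{claim:oldtonew}, which guarantees that the types of iterands used in the construction do not disturb the strong witnesses. Once this is in hand, everything else in the argument is a bookkeeping combination of the already established preservation results, and no new combinatorial content is needed.
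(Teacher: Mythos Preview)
Your proposal is correct and follows essentially the same route as the paper: build $P'$ via Lemma~\ref{lem:twelve}, intersect with the ${<}\kappa$-closed submodel $N^*$ from Lemma~\ref{lem:oldnew}, and read off the values of $\mfrak,\pfrak,\hfrak,\gfrak$ from Corollary~\ref{cor:trivial}(iii). The only cosmetic difference is that you invoke clause~(d) directly for $\pfrak$, whereas the paper uses clause~(a) for $\pfrak\ge\kappa$ and clause~(c) for $\gfrak\le\kappa$ and then the ZFC chain $\pfrak\le\hfrak\le\gfrak$; both arguments are equivalent.
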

(The comment after Lemma~\ref{lem:twelve} regarding various Martins axiom numbers
applies here as well.)
\begin{proof}
    The resulting $P'$ we just constructed still satisfies the requirements
    for Lemma~\ref{lem:oldnew}, so we apply this lemma and get
    $P'':=P'\cap N^*$ (for a ${<}\kappa$-closed $N^*$)
    which forces the desired values to all Cicho\'n-characteristics.
    Additionally $P''$ forces:
    \begin{itemize}
        \item $\pfrak\ge\kappa$, by Corollary~\ref{cor:trivial}(iii)(a), as $P'$ forces $\pfrak=\kappa$.
        \item $\gfrak\le\kappa$, by Corollary~\ref{cor:trivial}(iii)(c), as $P'$ forces $\gfrak=\kappa$.
        \item $\pfrak=\hfrak=\gfrak=\kappa$, as ZFC proves $\pfrak\le\gfrak$.
        \item In case $\lambda_m<\kappa$: $\mfrak=\lambda_m$ by
        Corollary~\ref{cor:trivial}(iii)(b).
        \item In case $\lambda_m=\kappa$: $\mfrak\ge\kappa$ by
        Corollary~\ref{cor:trivial}(iii)(a), which again implies $\mfrak=\lambda_m$, as ZFC proves $\mfrak\le\pfrak=\kappa$.
    \end{itemize}
\end{proof}
\section{Products, dealing with \texorpdfstring{$\mathfrak p$}{p}}\label{sec:p}

We start reviewing a basic result in forcing theory.

\begin{lemma}[Easton's lemma]
    Let $\xi$ be an uncountable cardinal, $P$ a $\xi$-cc poset and let $Q$ be a ${<}\xi$-closed poset. Then $P$ forces that $Q$ is ${<}\xi$-distributive.
\end{lemma}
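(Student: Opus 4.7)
The plan is to establish that $Q$ is ${<}\xi$-distributive in $V^P$, i.e., that $Q$ adds no new sequences of ordinals of length ${<}\xi$ over $V^P$. Equivalently, working in $V$ and viewing $\dot f$ as a $P\times Q$-name, I will show: for every $\mu<\xi$, every $P\times Q$-name $\dot f$ for a function $\mu\to\mathrm{Ord}$, and every $(p_0,q_0)\in P\times Q$, there exist $q\leq q_0$ in $Q$ and a $P$-name $\dot g$ such that $(p_0,q)\Vdash_{P\times Q}\dot f=\dot g$.

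The construction proceeds by recursion on $\alpha\leq\mu$, producing a decreasing sequence $(q_\alpha)_{\alpha\leq\mu}$ in $Q$ starting at the given $q_0$ together with, for each $\alpha<\mu$, a maximal antichain $A_\alpha\subseteq P$ below $p_0$ of size ${<}\xi$ and ordinals $\beta^\alpha_p$ ($p\in A_\alpha$) satisfying $(p,q_{\alpha+1})\Vdash\dot f(\alpha)=\beta^\alpha_p$. Limit stages $\alpha\leq\mu$ pose no difficulty: the sequence $(q_\beta)_{\beta<\alpha}$ is in $V$ and has length ${<}\xi$, so the ${<}\xi$-closure of $Q$ supplies a lower bound. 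Once the recursion is complete, assembling the $\beta^\alpha_p$ along the $A_\alpha$ yields the desired $P$-name $\dot g$, and $q:=q_\mu$ witnesses the claim.

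The main obstacle is the successor step. Given $q_\alpha$, one needs to produce a single $q_{\alpha+1}\leq q_\alpha$ together with a maximal antichain $A_\alpha$ below $p_0$ whose members all jointly decide $\dot f(\alpha)$ with $q_{\alpha+1}$; the subtlety is that arbitrary conditions of $Q$ need not have common lower bounds, so one cannot simply amalgamate witnesses found independently. I would handle this with an inner recursion: select pairwise incompatible $p_i\leq p_0$ and a decreasing sequence $(r_i)$ below $q_\alpha$ so that each $(p_i,r_{i+1})$ decides $\dot f(\alpha)$, stopping when no further such $p^*\leq p_0$ incompatible with all previous $p_i$ admits an extension $r\leq r_{i^*}$ with $(p^*,r)$ deciding $\dot f(\alpha)$. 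The inner recursion must terminate at some $i^*<\xi$ by the $\xi$-cc of $P$ applied to the antichain being built; the ${<}\xi$-closure of $Q$ handles its limit stages (which all lie below $\xi$); and the stopping condition, combined with the density in $P\times Q$ of the set of conditions deciding $\dot f(\alpha)$, forces maximality of $A_\alpha=\{p_i:i<i^*\}$ below $p_0$. Setting $q_{\alpha+1}:=r_{i^*}$ completes the inductive step, and the outer recursion runs to $\mu<\xi$.
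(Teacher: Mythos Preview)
Your argument is correct and is essentially the standard proof of Easton's lemma (the one the paper cites from Jech~\cite[Lemma~15.19]{Je03}); the paper itself gives no independent argument beyond that reference together with the observation that for singular $\xi$ one may pass to $\xi^+$. Two small presentational points: you reuse the symbol $p_0$ for both the fixed outer condition and the first member of the inner antichain, and your stopping condition is phrased somewhat indirectly---it would be cleaner to say outright ``stop once $\{p_i:i<i^*\}$ is maximal below $p_0$'' and observe that otherwise density in $P\times Q$ lets the recursion continue---but neither affects the mathematics.
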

\begin{proof}
   See e.g.~\cite[Lemma 15.19]{Je03}. Note that there the lemma is
   proved for successor cardinals only, but literally the same proof
   works for any regular cardinal; for singular cardinals $\xi$
   note that ${<}\xi$-closed implies ${<}\xi^+$-closed so
   we even get ${<}\xi^+$-distributive.
\end{proof}

\begin{lemma}\label{lem:lqhwo5u25}
Assume $\xi^{<\xi}=\xi$, $P$ is $\xi$-cc,
and set $Q=\xi^{<\xi}$ (ordered by extension).
Then $P$ forces that $Q^V$ preserves all cardinals
and cofinalities.
Assume  $P\Vdash\mathfrak x=\lambda$ (in particular that
$\lambda$ is a cardinal), and let $R$ be a Borel relation.
\begin{enumerate}[(a)]
    \item If $\mathfrak x$ is \mlike:
    $\lambda<\xi$ implies $P\times Q\Vdash\mathfrak x=\lambda$;
    $\lambda\ge\xi$ implies
    $P\times Q\Vdash \mathfrak x\ge \xi$.
    \item If $\mathfrak x$ is \hlike:
    $P\times Q\Vdash\mathfrak x\le \lambda$.
    \item $P\Vdash \LCU_R(\lambda)$ implies
    $P\times Q\Vdash \LCU_R(\lambda)$.
    \item $P\Vdash \COB_R(\lambda,\mu)$ implies
    $P\times Q\Vdash \COB_R(\lambda,\mu)$.
\end{enumerate}
\end{lemma}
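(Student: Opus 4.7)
The whole lemma will be a routine application of Easton's lemma combined with the preservation results already established (Lemma~\ref{lem:blassdistr} for $\LCU/\COB$ and Corollary~\ref{cor:trivial}(i) for \mlike/\hlike). The first step is to check that $Q$ is well behaved over $V[G_P]$: since $|Q|=\xi^{<\xi}=\xi$, the poset $Q^V$ has size $\xi$ in $V$, hence in $V[G_P]$, so it is trivially $\xi^+$-cc there. It is ${<}\xi$-closed in $V$, so by Easton's lemma (applied in $V$ to the $\xi$-cc forcing $P$ and the ${<}\xi$-closed forcing $Q$) it is ${<}\xi$-distributive in $V[G_P]$. In particular $Q$ adds no new sequences of ordinals of length ${<}\xi$, and being $\xi^+$-cc it preserves all cardinals and cofinalities $\ge\xi^+$. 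Combined, cardinals and cofinalities are preserved wholesale. So if $\lambda$ is any regular cardinal of $V[G_P]$, then $\lambda$ remains a regular cardinal of $V[G_P][G_Q]$.

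For (a) and (b), I would work in $V[G_P]$ and invoke Corollary~\ref{cor:trivial}(i) with the regular cardinal $\kappa$ of that corollary being our $\xi$, and the distributive forcing being $Q$. If $\mathfrak{x}$ is \mlike, part (a) of the corollary gives $\mathfrak{x}\ge\xi$ after forcing when $\lambda\ge\xi$, while part (b) gives $\mathfrak{x}=\lambda$ when $\lambda<\xi$; this is exactly (a). For (b) the corollary yields $P\times Q\Vdash\mathfrak{x}\le|\lambda|$, and since $\lambda$ is a cardinal preserved by $Q$ we conclude $\mathfrak{x}\le\lambda$.

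For (c) and (d), the strategy is identical but using Lemma~\ref{lem:blassdistr}, again working in $V[G_P]$ where $Q$ is $\theta$-cc for $\theta:=\xi^+$ and ${<}\xi$-distributive with $\kappa=\xi$. Part (1) of that lemma gives $Q\Vdash \LCU_R(\cof(\lambda))$; since cofinalities are preserved, $\cof(\lambda)=\lambda$, yielding (c). For part (d), the hypothesis of Lemma~\ref{lem:blassdistr}(2) is ``$\lambda\le\kappa$ or $\theta\le\lambda$'', i.e.\ $\lambda\le\xi$ or $\lambda\ge\xi^+$; any regular cardinal $\lambda$ satisfies the dichotomy since $\lambda>\xi$ forces $\lambda\ge\xi^+$. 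Thus $Q\Vdash \COB_R(\lambda,|\mu|)$, and since $\mu$ is preserved as a cardinal $|\mu|=\mu$ in the extension, proving (d).

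No step looks like a genuine obstacle; the only point requiring a moment of care is making sure that both the cofinality of $\lambda$ and the cardinality of $\mu$ are unchanged between $V[G_P]$ and $V[G_P][G_Q]$, which is handled by the preliminary cardinal-preservation claim. Everything else is simply a translation of the setup so that the existing lemmas apply verbatim.
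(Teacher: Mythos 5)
Your proof is correct and takes essentially the same route as the paper: verify via Easton's lemma that $Q$ is ${<}\xi$-distributive (and $\xi^+$-cc) over the $P$-extension, hence preserves cardinals and cofinalities, and then apply Lemma~\ref{lem:blassdistr} for (c),(d) and Corollary~\ref{cor:trivial}(i) for (a),(b). You spell out a few details the paper leaves implicit (e.g.\ that the dichotomy ``$\lambda\le\xi$ or $\xi^+\le\lambda$'' is automatic for regular $\lambda$), but the argument is the same one.
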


\begin{proof}
We call the $P^+$-extension $V''$ and the intermediate $P$-extension $V'$.

In $V'$, all $V$-cardinals $ {\ge}\xi$ are still cardinals, and
$Q$ is a $\mathord<\xi$-distributive forcing
(due to Easton's lemma).
So we can apply Lemma~\ref{lem:blassdistr} and Corollary~\ref{cor:trivial}.
%
%XXX CHECK WHETHER FOLLOWING (TILL END OF PROOF) NEEDED
%        Let $\mu$ be the successor (in $V$ or equivalently in $V'$)
%        of $\theta$.
%        So in $V'$, $Q$ is $\mu$-cc and
%        preserves all cardinals $\le \xi $ as well as all cardinals
%        $\ge \mu$.
%
%    So if $V\models$``$\xi\le \nu\le \theta$'',
%   then in $V''$, $\xi\le |\nu|<\mu$. The $V''$ successor of $\xi$
%   is $\le \mu$.
%
\end{proof}

The following is shown in \cite{longlow}:

\begin{lemma}\label{fact:wreqwr}
Assume that $\xi=\xi^{<\xi}$ and $P$ is a $\xi$-cc poset that forces $\xi\le \mathfrak p$.
In the $P$-extension $V'$, let $Q=(\xi^{<\xi})^V$. Then,
\begin{enumerate}[(a)]
    \item $P\times Q=P*Q$ forces
$\mathfrak p=\xi$
    \item If in addition $P$ forces $\xi\leq\pfrak=\mathfrak{h}=\kappa$ then $P\times Q$ forces $\mathfrak{h}=\kappa$.
\end{enumerate}
\end{lemma}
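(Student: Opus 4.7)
The plan is to write $V':=V^P$ and $V'':=V^{P\times Q}$ and split (a) into the two inequalities $\pfrak\geq\xi$ and $\pfrak\leq\xi$, handling (b) separately. For the lower bound $\pfrak\geq\xi$ in $V''$, I would rely only on absoluteness: by Easton's lemma, $P$ forces $Q$ to be ${<}\xi$-distributive, so $V''$ and $V'$ share the same reals and the same ${<}\xi$-sequences of reals. Hence any filter base in $V''$ of size ${<}\xi$ already lives in $V'$, where $V'\vDash\pfrak\geq\xi$ supplies a pseudo-intersection that remains one in $V''$. The same absoluteness gives the $\hfrak\leq\kappa$ half of (b): a $V'$-family of $\kappa$ dense open subsets of $([\omega]^\omega,\subseteq^*)$ with empty intersection is still a witness in $V''$, since $[\omega]^\omega$, density, openness, and ``empty intersection'' are all absolute.

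For the upper bound $\pfrak\leq\xi$, I would construct in $V'$ a $Q$-name $(\dot A_\alpha)_{\alpha<\xi}$ for a $\subseteq^*$-decreasing tower of $V'$-infinite subsets of $\omega$ with no $V'$-pseudo-intersection. Recursively, assign to each $q\in Q$ of length $\alpha$ some $A^q_\alpha\in V'\cap[\omega]^\omega$ with $q\Vdash_Q \dot A_\alpha=A^q_\alpha$: at a limit $\alpha$ take $A^q_\alpha$ to be a $V'$-pseudo-intersection of the tower $(A^{q\restriction\beta}_\beta)_{\beta<\alpha}$ of length ${<}\xi$, which exists because $V'\vDash\tfrak\geq\xi$; at a successor, fix (in $V'$) an almost-disjoint family $(C^q_\gamma)_{\gamma<\xi}$ of infinite subsets of $A^q_\alpha$ and set $A^{q^\frown \gamma}_{\alpha+1}:=C^q_\gamma$. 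To verify that no $B\in[\omega]^\omega\cap V'$ is a pseudo-intersection, I would show that the set of $q$ forcing $B\not\subseteq^*\dot A_\alpha$ for some $\alpha\leq|q|$ is dense: given $q_0$ of length $\alpha$, either $B\not\subseteq^* A^{q_0}_\alpha$ already, or almost-disjointness of $(C^{q_0}_\gamma)_{\gamma<\xi}$ forces $B$ to be almost-contained in at most one $C^{q_0}_{\gamma_0}$, and then any other extension $q_0^\frown\gamma$ forces $B\not\subseteq^*\dot A_{\alpha+1}$. Genericity then finishes part~(a), and $\pfrak=\xi$ immediately gives $\hfrak\geq\pfrak=\xi$ in $V''$.

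The remaining and, I expect, genuinely hard clause is $V''\vDash\hfrak\geq\kappa$ when $\xi<\kappa$. Here distributivity alone is insufficient: a family of $\mu<\kappa$ dense opens in $V''$ can have size $\geq\xi$ and thus need not lie in $V'$. The approach I would pursue (and which I believe is the method of~\cite{longlow}) is to replace each $\dot{\mathcal D}_\alpha$ by a $V'$-dense open $\mathcal E_\alpha$ with $\Vdash_Q \check{\mathcal E}_\alpha\subseteq\dot{\mathcal D}_\alpha$, so that $V'\vDash\hfrak\geq\kappa$ produces a common element of $\bigcap_\alpha\mathcal E_\alpha\subseteq\bigcap_\alpha\dot{\mathcal D}_\alpha[G]$. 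The leverage I would use is that $|Q|=\xi$ together with ${<}\xi$-distributivity forces each $Q$-name for a real to be decided by an antichain of size ${<}\xi$, so each $\dot{\mathcal D}_\alpha$ is naturally coded by $\xi$ pieces of $V'$-information; combined with $V'\vDash\pfrak=\kappa$, one can hope to amalgamate these $\mu\cdot\xi<\kappa$ pieces into a genuine $V'$-dense open $\mathcal E_\alpha\subseteq\dot{\mathcal D}_\alpha[G]$. The main obstacle will be showing that the amalgamated $\mathcal E_\alpha$ is still \emph{dense} in $([\omega]^\omega,\subseteq^*)$ in $V'$ (openness being routine), which is precisely where the stronger assumption $\pfrak=\hfrak=\kappa$ in $V'$ should pay off, presumably via a pseudo-intersection argument inside each $V'$-cone of ${<}\xi$-many would-be refinements.
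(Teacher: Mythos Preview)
Your treatment of part~(a) and of the inequality $\hfrak\le\kappa$ in~(b) is correct and matches the paper. The tower you build is exactly the paper's order- and incompatibility-preserving embedding $F:\langle Q,\subsetneq\rangle\to\langle[\omega]^{\aleph_0},\supsetneq^*\rangle$, just unfolded; the paper phrases the ``no pseudo-intersection'' step as ``a pseudo-intersection $a\in V'$ would pin down the generic branch $z\in\xi^\xi$'', which is the same density argument you give.

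The gap is in your plan for $\hfrak\ge\kappa$ when $\xi<\kappa$. You propose to replace each $Q$-name $\dot{\mathcal D}_\alpha$ by a $V'$-dense open $\mathcal E_\alpha\subseteq\dot{\mathcal D}_\alpha$ and then invoke $\hfrak=\kappa$ in $V'$; you correctly flag the density of $\mathcal E_\alpha$ as the obstacle and leave it unresolved. The paper bypasses this entirely with a one-line observation that you have all the ingredients for but did not assemble: apply Easton's lemma a \emph{second} time, in $V'$, with the roles of the two forcings reversed. Since $\pfrak=\tfrak=\kappa$ in $V'$, the partial order $([\omega]^{\aleph_0},\subseteq^*)$ is ${<}\kappa$-closed there; and since $|Q|=\xi<\kappa$, $Q$ is $\kappa$-cc. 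Easton's lemma then says outright that $Q$ forces $([\omega]^{\aleph_0},\subseteq^*)$ to be ${<}\kappa$-distributive, i.e., $\hfrak\ge\kappa$ in $V''$. (For the boundary case $\xi=\kappa$, which you already handled via $\hfrak\ge\pfrak$, the paper instead notes that ${<}\xi$-distributivity of $Q$ preserves ${<}\kappa$-closedness of $([\omega]^{\aleph_0},\subseteq^*)$.) So the clause you describe as ``genuinely hard'' is in fact the shortest, once one sees that $([\omega]^{\aleph_0},\subseteq^*)$ itself plays the role of the closed forcing.
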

\begin{proof}
    Work in the $P$-extension $V'$. $Q$ preserves cardinals and cofinalities,
    and it forces $\mathfrak{p}\ge\xi$ by Lemma~\ref{lem:lqhwo5u25}.

    There is an embedding $F$ from $\la Q,\subsetneq\ra$ into $\la[\omega]^{\aleph_0},\supsetneq^*\ra$ preserving the order and incompatibility (using the fact
    that $\xi\leq\mathfrak{p}=\mathfrak{t}$ and that every infinite
    set can be split into $\xi$ many almost disjoint sets).
    Now, $Q$
    adds a new sequence $z\in\xi^\xi\setminus V'$ and forces that $\dot{T}=\{F(z\restriction \alpha):\alpha<\xi\}$
    is a tower (hence $\mathfrak{t}\leq\xi$). If this were not the case, some condition in $Q $ would force
    that $\dot{T}$ has a pseudo-intersection $a$, but actually $a\in V'$ and it determines uniquely a branch in $\xi^\xi$, and this branch would be in fact $z$, i.e., $z\in V'$, a contradiction.
    So we have shown $P\times Q\Vdash \mathfrak t=\xi$.

    For (b): We already know that $Q\Vdash
    \mathfrak h\le \kappa$.
    To show that $\mathfrak h$ does not decrease, again work in $V'$.
    Note that $\la[\omega]^{\aleph_0},\subseteq^*\ra$ is ${<}\kappa$-closed (as $\mathfrak t=\kappa$).
    We claim that  $Q$ forces that $\la[\omega]^{\aleph_0},\subseteq^*\ra$ is ${<}\kappa$-distributive, (which implies
     $Q\Vdash \mathfrak{h}\ge \kappa$).

    If $\kappa=\xi$ then $\la[\omega]^{\aleph_0},\subseteq^*\ra$ is still ${<}\xi$-closed because $Q$ is ${<}\xi$-distributive; so assume $\xi<\kappa$. Then $Q$ is $\kappa$-cc (because $|Q|=\xi$), so $\la[\omega]^{\aleph_0},\subseteq^*\ra$ is forced to be ${<}\kappa$-distributive by Easton's Lemma (recall that $Q$ does not add new reals).
\end{proof}

%In conjunction with the results of the previous sections;
%Sections~\ref{sec:ma} and ~\ref{sec:precaliber}, we show that additional arbitrary regular values can be forced for the Martin numbers and $\pfrak$ in the old constructions of Section~\ref{sec:prelim}.

We are now ready to formulate the main theorem, the consistency of
13 different values (see Figure~\ref{fig:result}):
\begin{theorem}\label{thm:main}
   Assume GCH, and that
\begin{gather*}
\aleph_1\le\lambda_\mfrak\le\xi\le\kappa
\le \lambda_\addN
\le \lambda_\covN
\le \lambda_\bfrak
\le \lambda_\nonM\le\\
\le \lambda_\covM
\le \lambda_\dfrak
\le \lambda_\nonN
\le \lambda_\cofN
\le \lambda_\infty
\end{gather*}
are regular cardinals, with the
possible exception of $\lambda_\infty$, for which we only require
$\lambda_\infty^{<\kappa}=\lambda_\infty$.
Then we can force that
\begin{gather*}
\aleph_1\le\lambda_\mfrak\leq\pfrak=\xi\le \hfrak=\gfrak=\kappa\le\\
    \addN=\lambda_\addN
\le \covN=\lambda_\covN
\le \bfrak=\lambda_\bfrak
\le \nonM=\lambda_\nonM\le\\
     \covM=\lambda_\covM
\le \dfrak=\lambda_\dfrak
\le \nonN=\lambda_\nonN
\le \cofN=\lambda_\cofN
\le 2^{\aleph_0}=\lambda_\infty
\end{gather*}
and we can additionally chose any one of the following:
\begin{itemize}
    \item $\mfrak=\mfrak(\textnormal{precaliber})=\lambda_\mfrak$.
    \item For a fixed $1\le k<\omega$, $\mfrak(k\textnormal{-Knaster})=\aleph_1$ and $\mfrak(k+1\textnormal{-Knaster})=\lambda_\mfrak$.
    \item $\mfrak(k\textnormal{-Knaster})=\aleph_1$ for all $k<\omega$,
    and $\mfrak(\textnormal{precaliber})=\lambda_\mfrak$.
\end{itemize}
\end{theorem}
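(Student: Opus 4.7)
The idea is to combine the twelve-characteristics construction of Corollary~\ref{cor:twelve} with the product forcing of Lemma~\ref{fact:wreqwr} to split $\pfrak$ off from $\hfrak=\gfrak$. First, apply Corollary~\ref{cor:twelve} with the parameters $\lambda_\mfrak,\kappa$ and the desired values $\lambda_\addN,\dots,\lambda_\infty$ (together with whichever of the three MA-number constellations we want, as permitted by the comment following Lemma~\ref{lem:twelve}) to obtain a ccc poset $P''$ in the ground model that forces the ten Cicho\'n values (strongly witnessed by $\LCU$ and $\COB$ families), forces $\mfrak=\lambda_\mfrak$ (plus the chosen Knaster/precaliber configuration), and forces $\pfrak=\hfrak=\gfrak=\kappa$.

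Under GCH we have $\xi^{<\xi}=\xi$ since $\xi$ is regular, and because $P''$ is ccc this remains true in $V^{P''}$. Let $Q:=(\xi^{<\xi})^V$ ordered by extension, and set
\[
P^*:=P''\times Q\;=\;P''\ast\check Q.
\]
Since $P''$ is ccc (hence $\xi$-cc) and forces $\xi\le\pfrak$, Lemma~\ref{fact:wreqwr}(a) gives $P^*\Vdash\pfrak=\xi$, and since $P''$ forces $\xi\le\pfrak=\hfrak=\kappa$, Lemma~\ref{fact:wreqwr}(b) gives $P^*\Vdash\hfrak=\kappa$.

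The substantive point is preservation, which is exactly what Lemma~\ref{lem:lqhwo5u25} was designed for. Parts (c) and (d) of that lemma preserve the $\LCU$ and $\COB$ witnesses forced by $P''$, so the ten Cicho\'n values transfer verbatim to $V^{P^*}$. Part (a), applied to $\mfrak$ and to the various $\mfrak(k\text{-Knaster})$ / $\mfrak(\text{precaliber})$ that $P''$ has pinned down: if the value is $<\xi$ it is preserved exactly; if it equals $\xi$ (this can occur only when $\lambda_\mfrak=\xi$), then it is forced $\ge\xi$, and the ZFC bound $\mfrak\le\pfrak=\xi$ pins it to $\xi=\lambda_\mfrak$. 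For $\gfrak$, which is \hlike, part (b) gives $P^*\Vdash\gfrak\le\kappa$; combined with the ZFC inequality $\hfrak\le\gfrak$ and the equality $\hfrak=\kappa$ already established, we conclude $\gfrak=\kappa$.

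The main obstacle I anticipate is really just the bookkeeping in edge cases where some of the parameters collapse: $\lambda_\mfrak=\xi$, $\xi=\kappa$, or $\lambda_\mfrak=\aleph_1$. In each such case some chain of strict inequalities degenerates to equalities, but the theorem explicitly allows this (Figure~\ref{fig:result} says any $<$ may be replaced by $=$), and in each degenerate case the argument above is either vacuous or immediate from the ZFC inequalities $\mfrak\le\pfrak\le\hfrak\le\gfrak$. No genuine obstacle arises, because all cardinal characteristics in play are either \mlike, \hlike, or of the form $\bfrak_R/\dfrak_R$ for a Borel relation, and each of these classes is handled by Lemma~\ref{lem:lqhwo5u25} in the direction we need.
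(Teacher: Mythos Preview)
Your proof is correct and follows the paper's own argument essentially step for step: start from the $P''$ of Corollary~\ref{cor:twelve}, form $P''\times\xi^{<\xi}$, and then invoke Lemma~\ref{fact:wreqwr} for $\pfrak=\xi$ and $\hfrak=\kappa$, Lemma~\ref{lem:lqhwo5u25}(c,d) for the Cicho\'n values, (b) for $\gfrak\le\kappa$, and (a) for the MA-numbers (with the $\mfrak\le\pfrak$ squeeze in the boundary case $\lambda_\mfrak=\xi$). Your explicit mention of $\xi^{<\xi}=\xi$ from GCH and the edge-case discussion are harmless additions.
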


\begin{proof}
Start with the appropriate forcing $P''$ of Corollary~\ref{cor:twelve}.
Then $P''\times\xi^{<\xi}$ forces:
\begin{itemize}
    \item Strong witnesses to all Cicho\'n-characteristics; by Lemma~\ref{lem:lqhwo5u25}(c,d).
    \item $\pfrak=\xi$ and $\hfrak=\kappa$; by Lemma~\ref{fact:wreqwr}.
    \item $\gfrak\le\kappa$ by
    Lemma~\ref{lem:lqhwo5u25}(b)
    as $P''$ forces $\gfrak=\kappa$ and $\gfrak$ is \hlike.
    This implies $\gfrak=\kappa$, as ZFC proves $\hfrak\le\gfrak$.
    \item The desired values to the Martin axiom numbers;
    by Lemma~\ref{lem:lqhwo5u25}(a) (and by the fact that
    $\mfrak\leq\pfrak$, in case $\lambda_m=\xi$).\qedhere
\end{itemize}
\end{proof}

\section{Alternatives}\label{sec:ext}
The methods of this paper can be used for other
initial forcings on the left hand side and for
the Boolean ultrapower method instead of the method of intersections with elementary submodels.
Also, they allow us to
compose many forcing notions with collapses
while preserving cardinal characteristics.

All these topics are described in more detail in~\cite{GKMSproc};
in the following we just give an overview.

\subsection{Another order}

In~\cite{KeShTa:1131}, another ordering of Cicho\'n's maximum is shown
to be consistent (using large cardinals), namely the ordering
shown in Figure~\ref{fig:1131}.

The initial (left hand side) forcing is based on ideas from~\cite{ShCov},
and in particular the notion of finite additive measure (FAM) limit
introduced there for random forcing.
In addition, a creature forcing $\Qhor$ similar to the one
defined in~\cite{HwSh:1067} (with Horowitz) is introduced, which forces $\nonM\ge\lambda_\nonM$ and which
has FAM-limits similar to random forcing (which is required to keep
$\mathfrak{b}$ small).

In~\cite{GKMS2}, we show that
we can remove the large cardinal assumptions for this ordering as well
(using the same method).

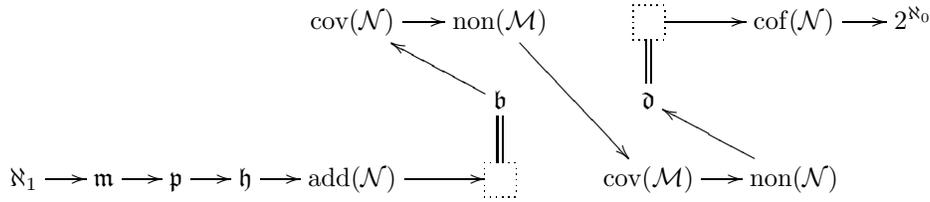
\begin{figure}
  \centering
\[
\xymatrix@=3.5ex{
&&&&            \covN\ar[r] & \nonM \ar[rdd]      &  \mye\ar@{=}[d]\ar[r]      & \cofN\ar[r] &2^{\aleph_0} \\
&&&&                               & \mathfrak b\ar[lu]  &  \mathfrak d &              \\
\aleph_1\ar[r] & \mfrak\ar[r] & \pfrak\ar[r]&\hfrak\ar[r]
& \addN\ar[r] & \mye\ar@{=}[u] &  \covM\ar[r]& \nonN\ar[lu]
}
\]
    \caption{\label{fig:1131}An alternative order that we get when we start with
    the initial forcing from~\cite{KeShTa:1131}.
    (Any $\rightarrow$ can be interpreted as either $<$ or $=$ as desired.)}
\end{figure}

It is straightforward to check that the method in this paper allows us
to add $\mfrak$, $\pfrak$, $\hfrak$ to this ordering as well;
so we get Theorem~\ref{thm:main} with both
($\bfrak$ and $\covN$) and  ($\dfrak$ and $\nonN$)
exchanged.
In particular, we get (see Figure~\ref{fig:1131}):
\begin{theorem}
   Consistently,
   \begin{gather*}
   \aleph_1<\mfrak<\pfrak<\hfrak<\addN<\bfrak<\covN<\nonM<\\<\covM<\nonN<\dfrak<\cofN<2^{\aleph_0}.
   \end{gather*}
\end{theorem}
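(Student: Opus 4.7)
The plan is to repeat the construction that led to Theorem~\ref{thm:main}, but replacing the initial left-hand-side forcing $\Ppre$ from Lemma~\ref{lem:oldleft} with the ccc iteration from~\cite{KeShTa:1131}, modified (as in~\cite{GKMS2}) so that it works from GCH without large cardinals. This initial forcing produces strong witnesses for
\[
\addN=\nu_1<\bfrak=\nu_2<\covN=\nu_3<\nonM=\nu_4<\cfrak=\theta_\infty,
\]
i.e., with the order of $\bfrak$ and $\covN$ exchanged relative to Lemma~\ref{lem:oldleft}; the reason is that $\bfrak$ is kept small via FAM-limits for random forcing and for the creature forcing $\Qhor$, while $\nonM\geq\lambda_\nonM$ is forced using $\Qhor$.

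First I would verify the analogue of Claim~\ref{claim:oldtonew}: any iterand that is either of size ${<}\nu_1$, or $\sigma$-centered of size ${<}\nu_2$, can be inserted (cofinally along with the original iterands) without destroying any of the strong witnesses. This follows just as in Section~\ref{subsec:blass}: the $\COB$-properties are preserved by bookkeeping, $\LCU$-statements other than the one for $\bfrak$ are preserved by the usual goodness arguments (small $\sigma$-centered forcings being good for the relevant relations), and the $\LCU$-statement responsible for keeping $\bfrak$ small is preserved because $\sigma$-centered forcings of size ${<}\nu_2$ have FAM-limits compatible with those of random forcing and $\Qhor$. The last point is the main technical obstacle: one has to check, in the framework of~\cite{KeShTa:1131, GKMS2}, that inserting small $\sigma$-centered iterands does not interfere with the FAM-limit construction that bounds $\bfrak$; but this is precisely analogous to the corresponding verification carried out for the original order, and requires no new ideas.

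Once this adapted initial forcing $\Ppre$ is in place, I would apply exactly the modifications from Sections~\ref{sec:ma}--\ref{sec:p} to it, in the same order: use Lemma~\ref{lem:eleven} (or~\ref{pluslargemprec}) to arrange the desired value of $\mfrak$ (this only adds iterands that are $(\sigma,k)$-linked of size ${<}\nu_2$ or of size ${<}\nu_1$, which fall under the adapted Claim~\ref{claim:oldtonew}); use Lemma~\ref{lem:subforcing} to cut down to a complete subforcing that additionally forces $\hfrak=\gfrak=\kappa$ and $\pfrak\geq\kappa$; apply Lemma~\ref{lem:oldnew} to intersect with a ${<}\kappa$-closed elementary submodel $N^*$, yielding $\Pfin$ forcing the required values to all ten Cichoń-characteristics (now in the new order) while preserving the already-forced values of $\mfrak$, $\pfrak$, $\hfrak$, $\gfrak$ by Corollary~\ref{cor:trivial}(iii); and finally take the product with $\xi^{<\xi}$ as in Theorem~\ref{thm:main}, invoking Lemmas~\ref{lem:lqhwo5u25} and~\ref{fact:wreqwr} to force $\pfrak=\xi$ while preserving $\hfrak$ and all Cichoń-values.

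The only genuinely new work is the verification described above that the initial forcing of~\cite{KeShTa:1131} tolerates the insertion of small $\sigma$-centered iterands without losing its $\LCU$-witness for $\bfrak$; everything else is a direct reapplication of the machinery developed in Sections~\ref{sec:ma}--\ref{sec:p}, with the arrows on the right side of Cichoń's diagram rearranged according to Figure~\ref{fig:1131}. Choosing all the $\lambda$-parameters to be distinct regular cardinals then yields the stated thirteen strict inequalities.
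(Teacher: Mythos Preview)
Your proposal is correct and follows exactly the route the paper takes: start from the left-hand-side forcing of~\cite{KeShTa:1131} (in its large-cardinal-free form from~\cite{GKMS2}) and rerun the machinery of Sections~\ref{sec:ma}--\ref{sec:p} unchanged. The paper's own argument is in fact just the one-line assertion that ``it is straightforward to check that the method in this paper allows us to add $\mfrak$, $\pfrak$, $\hfrak$ to this ordering as well'', so you have supplied more detail than the paper does; one small remark is that in your analogue of Claim~\ref{claim:oldtonew} the preservation of the $\LCU$-witness for $\bfrak$ comes simply from the inserted iterands having size ${<}\nu_2$, not from $\sigma$-centered forcings possessing FAM-limits of their own.
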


\subsection{Boolean ultrapowers}

As mentioned in Subsection~\ref{subsec:history},
the original Cicho\'n Maximum construction \cite{GKS} uses four strongly compact cardinals: First, the left side of Cicho\'n's diagram is separated with $\Ppre$ of ~\ref{lem:oldleft}, where we assume that
there are compacts between each of $\aleph_1<\nu_1<\nu_2<\nu_3<\nu_4$.
Then four Boolean ultrapowers are applied to this poset (one for each compact cardinal) to construct a forcing $P^*$ that separates, in addition, the right hand side, while preserving the left side values already forced by $\Ppre$.

In view of Corollary~\ref{cor:trivial}(ii), we can use the methods of Sections~\ref{sec:ma}--\ref{sec:p} to force, in addition, $\mfrak<\pfrak<\hfrak<\addN$.

% \begin{theorem}\label{thm:bla66}
%   Assume GCH and that
%   \begin{gather*}
%       \aleph_1\leq\lambda_{\mfrak}\leq\xi\leq\kappa\leq\kappa_9<\lambda_1<\kappa_8<\lambda_2<\kappa_7<\lambda_3<\kappa_6<\lambda_4\leq\\
%       \leq\lambda_5\leq\lambda_6\leq\lambda_7\leq\lambda_8\leq\lambda_9
%   \end{gather*}
%     such that
%   \begin{enumerate}[(i)]
%       \item For $j=6,7,8,9$, $\kappa_i$ is strongly compact,
%       \item $\lambda_i$ is regular for $i\neq5,9$
%       \item $\lambda_j^{\kappa_j}=\lambda_j$ for $j=6,7,8,9$,
%       \item $\xi^{{<}\xi}=\xi$
%       \item $\lambda_5^{<\lambda_4}=\lambda_5$,
%   \end{enumerate}
%   Then there is a $\xi^+$-cc poset $P$ which preserves cofinalities and forces
%   \begin{gather*}
%       \aleph_1\le\lambda_\mfrak\leq\pfrak=\xi\le \hfrak=\kappa\le\\
%       \le\addN=\lambda_1
%       \le \covN=\lambda_2
%       \le \bfrak=\lambda_3
%       \le \nonM=\lambda_4\le\\
%       \le\covM=\lambda_5
%       \le \dfrak=\lambda_6
%       \le \nonN=\lambda_7
%       \le \cofN=\lambda_8
%       \le 2^{\aleph_0}=\lambda_9
%   \end{gather*}
% \end{theorem}

In contrast with Theorem~\ref{thm:main}, we can now force not only the continuum to be singular, but also $\covM$. The reason is that the poset for the left side can force $\covM=\cfrak$ singular,\footnote{This is not explicitly mentioned in~\cite{GKS}.} and the value of $\covM$ is not changed after Boolean ultrapowers (and the other methods).  The same applies to the alternate order from~\cite{KeShTa:1131} as well.

\subsection{Alternative left hand side forcings}\label{sec:alternatives}

According to subsection~\ref{subsec:history}, \cite{diegoetal} provides an alternative proof of Cicho\'n's maximum, using three strongly compact cardinals. As in~\cite{GKS}, this results from applying Boolean ultrapowers to a ccc poset that separates the left side, but the new initial forcing additionally gives  $\covM<\dfrak=\nonN=\cfrak$, where this value of $\dfrak$ can be singular. The methods of this work also apply, and we can obtain a consistency result as in Theorem~\ref{thm:main}, but there $\dfrak$ and $\cfrak$ are forced to be singular.

\subsection{Reducing gaps with collapsing forcing}

To be able to apply Boolean ultrapowers,
it is necessary to have strongly compact cardinals between the
left-hand-side values. Accordingly these values have to
have large gaps.
The methods of this paper allow to collapse these gaps; and more generally
to compose collapses with a large family of forcing notions.

For example, if $P$ forces $\mathfrak x=\lambda<\mathfrak y=\kappa$, and $\lambda$
and $\kappa$ are far apart; but you would prefer to have
$\mathfrak x=\lambda<\mathfrak y=\lambda^+$, then
the methods of~\cite{GKMSproc}
allow us to compose $P$ with a collapse of $\kappa$ to $\lambda^+$,
provided $\mathfrak x$ is reasonably well behaved.

\bibliography{morebib}

\providecommand{\bysame}{\leavevmode\hbox to3em{\hrulefill}\thinspace}
\providecommand{\MR}{\relax\ifhmode\unskip\space\fi MR }
% \MRhref is called by the amsart/book/proc definition of \MR.
\providecommand{\MRhref}[2]{%
  \href{http://www.ams.org/mathscinet-getitem?mr=#1}{#2}
}
\providecommand{\href}[2]{#2}
\begin{thebibliography}{GKMSb}

\bibitem[Bar84]{MR719666}
Tomek Bartoszy\'nski, \emph{Additivity of measure implies additivity of
  category}, Trans. Amer. Math. Soc. \textbf{281} (1984), no.~1, 209--213.
  \MR{719666}

\bibitem[Bar92]{Barnett}
Janet~Heine Barnett, \emph{Weak variants of {M}artin's axiom}, Fund. Math.
  \textbf{141} (1992), no.~1, 61--73. \MR{1178369}

\bibitem[BCM21]{diegoetal}
J\"{o}rg Brendle, Miguel~A. Cardona, and Diego~A. Mej\'{i}a,
  \emph{Filter-linkedness and its effect on preservation of cardinal
  characteristics}, Ann. Pure Appl. Logic \textbf{171} (2021), to appear.
  \href{https://doi.org/10.1016/j.apal.2020.102856}{doi:10.1016/j.apal.2020.102856},
  \href{https://arxiv.org/abs/1809.05004}{arXiv:1809.05004}.

\bibitem[BJ95]{BaJu}
Tomek Bartoszy{\'{n}}ski and Haim Judah, \emph{{Set Theory: On the Structure of
  the Real Line}}, A K Peters, Wellesley, Massachusetts, 1995.

\bibitem[BJS93]{MR1233917}
Tomek Bartoszy{\'n}ski, Haim Judah, and Saharon Shelah, \emph{The {C}icho\'n
  diagram}, J. Symbolic Logic \textbf{58} (1993), no.~2, 401--423. \MR{1233917
  (94m:03077)}

\bibitem[Bla89]{BlassSP}
Andreas Blass, \emph{Applications of superperfect forcing and its relatives},
  Set theory and its applications ({T}oronto, {ON}, 1987), Lecture Notes in
  Math., vol. 1401, Springer, Berlin, 1989, pp.~18--40. \MR{1031763}

\bibitem[Bla93]{MR1234278}
\bysame, \emph{Simple cardinal characteristics of the continuum}, Set theory of
  the reals ({R}amat {G}an, 1991), Israel Math. Conf. Proc., vol.~6, Bar-Ilan
  Univ., Ramat Gan, 1993, pp.~63--90. \MR{1234278}

\bibitem[Bla10]{Blass}
\bysame, \emph{Combinatorial cardinal characteristics of the continuum},
  Handbook of set theory. {V}ols. 1, 2, 3, Springer, Dordrecht, 2010,
  pp.~395--489. \MR{2768685}

\bibitem[BM99]{MR1777781}
Andreas Blass and Heike Mildenberger, \emph{On the cofinality of ultrapowers},
  J. Symbolic Logic \textbf{64} (1999), no.~2, 727--736. \MR{1777781}

\bibitem[Bre91]{Br}
J{\"o}rg Brendle, \emph{Larger cardinals in {C}icho\'n's diagram}, J. Symbolic
  Logic \textbf{56} (1991), no.~3, 795--810. \MR{1129144 (92i:03055)}

\bibitem[Bre10]{BrHejnice}
\bysame, \emph{Aspects of iterated forcing: the {H}ejnice lectures}, Lecture
  Notes (2010), 1--19.

\bibitem[Can89]{MR1036675}
R.~Michael Canjar, \emph{Cofinalities of countable ultraproducts: the existence
  theorem}, Notre Dame J. Formal Logic \textbf{30} (1989), no.~4, 539--542.
  \MR{1036675}

\bibitem[CKP85]{MR781072}
J.~Cicho{\'n}, A.~Kamburelis, and J.~Pawlikowski, \emph{On dense subsets of the
  measure algebra}, Proc. Amer. Math. Soc. \textbf{94} (1985), no.~1, 142--146.
  \MR{781072 (86j:04001)}

\bibitem[CM19]{CM19}
Miguel~A. Cardona and Diego~A. Mej\'{i}a, \emph{On cardinal characteristics of
  {Y}orioka ideals}, Mathematical Logic Quarterly \textbf{65} (2019), no.~2,
  170--199.

\bibitem[DS]{longlow}
Alan Dow and Saharon Shelah, \emph{On the bounding, splitting, and
  distributivity numbers of {$\mathcal P (\mathbb N)$}; an application of
  long-low iterations}, \url{https://math2.uncc.edu/~adow/F1276.pdf}.

\bibitem[DS78]{DevS}
Keith~J. Devlin and Saharon Shelah, \emph{A weak version of {$\diamondsuit $}
  which follows from {$2^{\aleph _{0}}<2^{\aleph _{1}}$}}, Israel J. Math.
  \textbf{29} (1978), no.~2-3, 239--247. \MR{0469756}

\bibitem[{Fre}84]{zbMATH03891346}
D.~H. {Fremlin}, \emph{{Cicho\'n's diagram.}}, {Publ. Math. Univ. Pierre Marie
  Curie 66, S\'emin. Initiation Anal. 23\`eme Ann\'ee-1983/84, Exp. No. 5, 13
  p. (1984).}, 1984.

\bibitem[Gal80]{Galvin}
Fred Galvin, \emph{Chain conditions and products}, Fund. Math. \textbf{108}
  (1980), no.~1, 33--48. \MR{585558}

\bibitem[GKMSa]{GKMS2}
Martin Goldstern, Jakob Kellner, Diego~Alejandro Mej\'{i}a, and Saharon Shelah,
  \emph{Cicho\'n's maximum without large cardinals}, J. Eur. Math. Soc. (JEMS),
  to appear. \href{https://arxiv.org/abs/1906.06608}{arXiv:1906.06608}.

\bibitem[GKMSb]{GKMSproc}
\bysame, \emph{Controlling classical cardinal characteristics while collapsing
  cardinals}, \href{https://arxiv.org/abs/1904.02617}{arXiv:1904.02617}.

\bibitem[GKS19]{GKS}
Martin Goldstern, Jakob Kellner, and Saharon Shelah, \emph{Cicho\'{n}'s
  maximum}, Ann. of Math. \textbf{190} (2019), no.~1, 113--143,
  \href{https://arxiv.org/abs/1708.03691}{arXiv:1708.03691}.

\bibitem[GMS16]{GMS}
Martin Goldstern, Diego~Alejandro Mej\'{i}a, and Saharon Shelah, \emph{The left
  side of {C}icho\'{n}'s diagram}, Proc. Amer. Math. Soc. \textbf{144} (2016),
  no.~9, 4025--4042. \MR{3513558}

\bibitem[GS93]{GoSh:448}
Martin Goldstern and Saharon Shelah, \emph{{Many simple cardinal invariants}},
  Archive for Mathematical Logic \textbf{32} (1993), 203--221.

\bibitem[HS]{HwSh:1067}
Haim Horowitz and Saharon Shelah, \emph{{Saccharinity with ccc}},
  \texttt{arxiv:1610.02706}.

\bibitem[Jec03]{Je03}
Thomas Jech, \emph{{Set theory}}, Springer Monographs in Mathematics,
  Springer-Verlag, Berlin, 2003, The third millennium edition, revised and
  expanded.

\bibitem[JS90]{JS}
Haim Judah and Saharon Shelah, \emph{The {K}unen-{M}iller chart ({L}ebesgue
  measure, the {B}aire property, {L}aver reals and preservation theorems for
  forcing)}, J. Symbolic Logic \textbf{55} (1990), no.~3, 909--927. \MR{1071305
  (91g:03097)}

\bibitem[Kam89]{MR1022984}
Anastasis Kamburelis, \emph{Iterations of {B}oolean algebras with measure},
  Arch. Math. Logic \textbf{29} (1989), no.~1, 21--28. \MR{1022984}

\bibitem[Kra83]{Krawczyk83}
Adam Krawczyk, \emph{Consistency of {A}(c) \& {B}(m) \& non-{A}(m)},
  Unpublished notes, 1983.

\bibitem[KST19]{KeShTa:1131}
Jakob Kellner, Saharon Shelah, and Anda T{\u{a}}nasie, \emph{Another ordering
  of the ten cardinal characteristics in {C}icho\'{n}'s diagram}, Comment.
  Math. Univ. Carolin. \textbf{60} (2019), no.~1, 61--95.

\bibitem[KTT18]{KTT}
Jakob Kellner, Anda~Ramona T\u{a}nasie, and Fabio~Elio Tonti, \emph{Compact
  cardinals and eight values in {C}icho\'{n}'s diagram}, J. Symb. Log.
  \textbf{83} (2018), no.~2, 790--803. \MR{3835089}

\bibitem[Kun11]{Kunen}
Kenneth Kunen, \emph{Set theory}, Studies in Logic (London), vol.~34, College
  Publications, London, 2011. \MR{2905394}

\bibitem[Mej19]{mejiavert}
Diego~A. Mej{\'{i}}a, \emph{Matrix iterations with vertical support
  restrictions}, Proceedings of the 14th and 15th Asian Logic Conferences
  (Byunghan Kim, J\"{o}rg Brendle, Gyesik Lee, Fenrong Liu, R~Ramanujam,
  Shashi~M Srivastava, Akito Tsuboi, and Liang Yu, eds.), World Sci. Publ.,
  2019, pp.~213--248.

\bibitem[Mil81]{MR613787}
Arnold~W. Miller, \emph{Some properties of measure and category}, Trans. Amer.
  Math. Soc. \textbf{266} (1981), no.~1, 93--114. \MR{613787 (84e:03058a)}

\bibitem[Mil84]{MR735576}
\bysame, \emph{Additivity of measure implies dominating reals}, Proc. Amer.
  Math. Soc. \textbf{91} (1984), no.~1, 111--117. \MR{735576 (85k:03032)}

\bibitem[Mil98]{MR1625907}
Heike Mildenberger, \emph{Changing cardinal invariants of the reals without
  changing cardinals or the reals}, J. Symbolic Logic \textbf{63} (1998),
  no.~2, 593--599. \MR{1625907}

\bibitem[MS16]{MSpt}
M.~Malliaris and S.~Shelah, \emph{Cofinality spectrum theorems in model theory,
  set theory, and general topology}, J. Amer. Math. Soc. \textbf{29} (2016),
  no.~1, 237--297. \MR{3402699}

\bibitem[RS83]{MR697963}
Jean Raisonnier and Jacques Stern, \emph{Mesurabilit\'e et propri\'et\'e de
  {B}aire}, C. R. Acad. Sci. Paris S\'er. I Math. \textbf{296} (1983), no.~7,
  323--326. \MR{697963 (84g:03077)}

\bibitem[She84]{Sh:176}
Saharon Shelah, \emph{{Can you take Solovay's inaccessible away?}}, Israel
  Journal of Mathematics \textbf{48} (1984), 1--47.

\bibitem[She00]{ShCov}
\bysame, \emph{Covering of the null ideal may have countable cofinality}, Fund.
  Math. \textbf{166} (2000), no.~1-2, 109--136, Saharon Shelah's anniversary
  issue. \MR{1804707 (2001m:03101)}

\bibitem[Tod86]{TodorCell}
Stevo Todor\v{c}evi\'c, \emph{Remarks on cellularity in products}, Compositio
  Math. \textbf{57} (1986), no.~3, 357--372. \MR{829326}

\bibitem[Tod89]{To93}
Stevo Todor\v{c}evi\'{c}, \emph{{Partition problems in Topology}}, Contemporary
  Mathematics, vol.~84, American Mathematical Society, Providence, RI, 1989.

\bibitem[Vel84]{MR763903}
Dan Velleman, \emph{Souslin trees constructed from morasses}, Axiomatic set
  theory ({B}oulder, {C}olo., 1983), Contemp. Math., vol.~31, Amer. Math. Soc.,
  Providence, RI, 1984, pp.~219--241. \MR{763903}

\bibitem[Voj93]{MR1234291}
Peter Vojt\'{a}\v{s}, \emph{Generalized {G}alois-{T}ukey-connections between
  explicit relations on classical objects of real analysis}, Set theory of the
  reals ({R}amat {G}an, 1991), Israel Math. Conf. Proc., vol.~6, Bar-Ilan
  Univ., Ramat Gan, 1993, pp.~619--643. \MR{1234291}

\end{thebibliography}
\bibliographystyle{amsalpha}
\end{document}